\definecolor{darkred}{RGB}{100,0,0}
\definecolor{darkgreen}{RGB}{0,100,0}
\definecolor{darkblue}{RGB}{0,0,150}
\newtheorem{thm}{Theorem}
\newtheorem{prp}{Proposition}
\newtheorem{lem}{Lemma}
\def\beq{\begin{equation}}
\def\eeq{\end{equation}}
\def\beqn{\begin{eqnarray*}}
\def\eeqn{\end{eqnarray*}}
\def\bitem{\begin{itemize}}
\def\eitem{\end{itemize}}
\def\benum{\begin{enumerate}}
\def\eenum{\end{enumerate}}
\def\bmult{\begin{multline*}}
\def\emult{\end{multline*}}
\def\bcenter{\begin{center}}
\def\ecenter{\end{center}}
\newcommand{\thmref}[1]{Theorem~\ref{thm:#1}}
\newcommand{\prpref}[1]{Proposition~\ref{prp:#1}}
\newcommand{\lemref}[1]{Lemma~\ref{lem:#1}}
\newcommand{\secref}[1]{Section~\ref{sec:#1}}
\DeclareMathOperator{\trace}{Trace}
\def\cE{\mathcal{E}}
\def\cG{\mathcal{G}}
\def\cV{\mathcal{V}}
\def\bW{\boldsymbol{A}}
\def\bB{\boldsymbol{B}}
\def\bU{\boldsymbol{U}}
\def\bW{\boldsymbol{W}}
\def\bZ{\boldsymbol{Z}}
\def\bbG{\mathbb{G}}
\def\bbR{\mathbb{R}}
\newcommand{\E}{\operatorname{\mathbb{E}}}
\renewcommand{\P}{\operatorname{\mathbb{P}}}
\newcommand{\Var}{\operatorname{Var}}
\newcommand{\pr}[1]{\mathbb{P}\left(#1\right)}
\def\iid{\stackrel{\rm iid}{\sim}}
\def\Bin{\text{Bin}}
\def\eps{\varepsilon}
\newcommand{\1}{\mathds{1}}
\def\tot{W}
\def\scan{W_{[n]}^*}
\def\nn{n^{(2)}}
\def\NN{N^{(2)}}
\def\kk{k^{(2)}}
\def\cheb{R}
\def\Lt{\tilde{L}}
\def\kmin{k_{\rm min}}
\newcommand{\wh}[1]{\widehat{#1}}
\newcommand{\IND}[1]{\mathbbm{1}_{\{ #1 \}}}
\definecolor{purple}{rgb}{0.4,.1,.9}
\begin{document}
\thispagestyle{empty}

% TITLE
\noindent {\sc \LARGE Community Detection in Random Networks}

% AUTHORS
\bigskip 
\noindent {\Large
Ery Arias-Castro\footnote{Department of Mathematics, University of California, San Diego, USA} and Nicolas Verzelen\footnote{INRA, UMR 729 MISTEA, F-34060 Montpellier, FRANCE}} \\[.05in]

% ABSTRACT
\bigskip
\noindent
We formalize the problem of detecting a community in a network into testing whether in a given (random) graph there is a subgraph that is unusually dense.  We observe an undirected and unweighted graph on $N$ nodes.  Under the null hypothesis, the graph is a realization of an Erd\"os-R\'enyi graph with probability $p_0$.  Under the (composite) alternative, there is a subgraph of $n$ nodes where the probability of connection is $p_1 > p_0$.  We derive a detection lower bound for detecting such a subgraph in terms of $N, n, p_0, p_1$ and exhibit a test that achieves that lower bound.  We do this both when $p_0$ is known and unknown.  We also consider the problem of testing in polynomial-time.  As an aside, we consider the problem of detecting a clique, which is intimately related to the planted clique problem.  Our focus in this paper is in the quasi-normal regime where $n p_0$ is either bounded away from zero, or tends to zero slowly.

\medskip

\noindent {\bf Keywords:} community detection, detecting a dense subgraph, minimax hypothesis testing, Erd\"os-R\'enyi random graph, scan statistic, planted clique problem, sparse eigenvalue problem.  

\medskip
\bcenter
{\large \em Dedicated to the memory of Yuri I.~Ingster}
\ecenter

\section{Introduction}
\label{sec:intro}

In recent years, the problem of detecting communities in networks has received a large amount of attention, with important applications in the social and biological sciences, among others \citep{Santo201075}.  The vast majority of this expansive literature focuses on developing realistic models of (random) networks \citep{RevModPhys.74.47,Barabasi15101999}, on designing  methods for extracting communities from such networks \citep{Newman06062006,PhysRevE.74.016110,Girvan11062002} and on fitting  models to network data \citep{MR2906868}.  

The underlying model is that of graph $\cG = (\cE, \cV)$, where $\cE$ is the set of edges and $\cV$ is the set of nodes.  For example, in a social network, a node would represent an individual and an edge between two nodes would symbolize a friendship or kinship of some sort shared by these two individuals.  In the literature just mentioned, almost all the methodology has concentrated on devising graph partitioning methods, with the end goal of clustering the nodes in $\cV$ into groups with strong inner-connectivity and weak inter-connectivity \citep{PhysRevE.80.056117,PhysRevE.69.026113,Bickel15122009}.

In this euphoria, perhaps the most basic problem of actually detecting the {\em presence} of a community in an otherwise homogeneous network has been overlooked.  From a practical standpoint, this sort of problem could arise in a dynamic setting where a network is growing over time and monitored for clustering.  From a mathematical perspective, probing the limits of detection (i.e., hypothesis testing) often offers insight into what is possible in terms of extraction (i.e., estimation).  

Many existing community extraction methods can be turned into community detection procedures.  For example, one could decide that a community is present in the network if the modularity of \cite{PhysRevE.69.026113} exceeds a given threshold.  To set this threshold, one needs to define a null model.  \cite{PhysRevE.69.026113} implicitly assume a random graph conditional on the node degrees.  Here, we make the simplest assumption that the null model is an Erd\"os-R\'enyi random graph \citep{MR1864966}. 

In this context, we also touch on another line of work, that of detecting a clique in a random graph --- the so-called Planted (or Hidden) Clique Problem \citep{MR2735341,MR1662795,dekel}.  Although the emphasis there is to find the detection performance of computationally tractable algorithms, we mostly ignore computational consideration and simply establish the absolute detection limits of any algorithm whatsoever.

\subsection{The framework}
\label{sec:framework}

We address a stylized community detection problem, where the task is to detect the presence of clustering in the network and is formalized as a hypothesis testing problem.  We observe an {\em undirected} graph $\cG = (\cE, \cV)$ with $N := |\cV|$ nodes.  Without loss of generality, we take $\cV = [N] := \{1, \dots, N\}$.  The corresponding adjacency matrix is denoted $\bW \in \{0,1\}^{N \times N}$, where $W_{i,j} = 1$ if, and only if, $(i,j) \in \cE$, meaning there is an edge between nodes $i, j \in \cV$.  Note that $\bW$ is symmetric, and we assume that $W_{ii} = 0$ for all $i$.  Under the null hypothesis, the graph $\cG$ is a realization of $\bbG(N, p_0)$, the Erd\"os-R\'enyi random graph on $N$ nodes with probability of connection $p_0 \in (0,1)$; equivalently, the upper diagonal entries of $\bW$ are independent and identically distributed with $\P(W_{i,j} = 1) = p_0$ for any $i \neq j$.  Under the alternative, there is a subset of nodes indexed by $S \subset    \cV$ such that $\P(W_{i,j} = 1) = p_1$ for any 
$i,j \in S$ with $i \neq j$, with everything else the same.  We assume that $p_1 > p_0$, implying that the connectivity is stronger between nodes in $S$.  When $p_1 = 1$, the subgraph with node set $S$ is a clique.
The subset $S$ is not known, although in most of the paper we assume that its size $n := |S|$ is known.  

We study detectability in this framework in asymptotic regimes where $n, N \to \infty$, and $p_0, p_1$ may also change; all these parameters are assumed to be functions of $N$.  A test $T$ is a function that takes $\bW$ as input and returns $T =1$ to claim there is a community in the network, and $T=0$ otherwise.  The (worst-case) risk of a test $T$ is defined as
\[
\gamma_N(T) = \P_0(T = 1) + \max_{|S| = n} \P_S(T = 0),
\]
where $\P_0$ is the distribution under the null and $\P_S$ is the distribution under the alternative where $S$ indexes the community.  We say that a sequence of tests $(T_N)$ for a sequence of problems $(\bW_N)$ is asymptotically powerful (resp.~powerless) if $\gamma_N(T_N) \to 0$ (resp.~$\to 1$).  Practically speaking, a sequence of tests is asymptotically powerless if it does not perform substantially better than any guessing that ignores the adjacency matrix $\bW$.  We will often speak of a test being powerful or powerless when in fact referring to a sequence of tests and its asymptotic power properties.

\subsection{Closely related work}

We take the beaten path, following the standard approach in statistics for analyzing such composite hypothesis testing problems, in particular, the work of \cite{MR1456646} and others \citep{MR2065195,MR1976749,MR2662357} on the detection of a sparse (normal) mean vector.  Most closely related to our work is that of \cite{1109.0898}.  Specializing their results to our setting, they derive lower bounds and upper bounds for the same detection problem when the graph is directed and the probability of connection under the null (denoted $p_0$) is fixed, which is a situation where the graph is extremely dense.  Their work leaves out the interesting regime where $p_0 \to 0$, which leads to a null model that is much more sparse.

\subsection{Main Contribution}
 
Our main contribution in this paper is to derive a sharp detection boundary for the problem of detecting a community in a network as described above.
We focus here on the quasi-normal regime\footnote{The quasi-Poisson regime where $n p_0 \to 0$ polynomially fast is qualitatively different and necessitates different proof arguments.  This is beyond the scope of this paper and will appear somewhere else.} 
where $n p_0$ is either bounded away from zero, or tends to zero slowly, specifically,
\beq \label{n-p0}
\log\left(1 \vee \frac{1}{np_0}\right)=o\left[\log\left(\frac{N}{n}\right)\right]\ .
\eeq

On the one hand, we derive an information theoretic bound that applies to all tests, meaning conditions under which all tests are powerless.  On the other hand, we display a test that basically achieves the best performance possible.   
The test is the combination of the two natural tests that arise in \cite{1109.0898} and much of the work in that field \citep{ingster2010detection,anova-hc}:
\bitem
\item {\em Total degree test.}  This test rejects when the total number of edges is unusually large.  This is global in nature in that it cannot be directly turned into a method for extraction.  
\item {\em Scan (or maximum modularity) test.}  This test amounts to turning modularity into a test statistic by rejecting when its maximum value is unusually large.  It is strictly speaking the generalized likelihood ratio test under our framework.
\eitem

We also consider the situation, common in practice, where $p_0$ is unknown. Interestingly, the detection boundary becomes larger than in the former setting when $n$ is moderately sparse. We derive the corresponding lower bound in this situation and design a test that achieves this bound.
The test is again the combination of the two tests:
\bitem
\item {\em Degree variance test.}  This test is based on the differences between two estimates for the degree variance, an analysis of variance of sorts.  (Note that the total degree test cannot be calibrated without knowledge of $p_0$.)
\item {\em Scan test.}  This test can be calibrated in various ways when $p_0$ is unknown, for example by estimation of $p_0$ based on the whole graph, or by permutation.  We study the former.
\eitem

Finally, we consider various polynomial-time algorithms, the main one being a convex relaxation of the scan test based on a sparse eigenvalue problem formulation.  Our inspiration there comes from the recent work of \cite{berthet}. We discuss the discrepancy between the performances of the scan test and the relaxed scan test and compare it with other polynomial-time tests.

We summarize our findings in Tables~\ref{tab:main} and~\ref{tab:poly}, where 
\[R = \frac{\sqrt{n} (p_1 - p_0)}{\sqrt{p_0 (1-p_0)}}\]
is (up to $\sqrt{n/2}$ factor) the SNR for detecting the dense subgraph when it is known.

\def\arraystretch{2}
\begin{table}
\caption{Detection boundary and near-optimal algorithms. For any sequence $a$ and $b$ going to infinity,  $a\lll b$ (resp. $a\ggg b$) means that there exists $\epsilon>0$ arbitrarily small such that $a\leq b^{1-\epsilon}$ (resp. $a\geq b^{1+\epsilon}$) }
\label{tab:main}
\centering
\centering
{\small 
\begin{tabular}{c ||c | c|| c |c }
  & \multicolumn{2}{|c||}{\large $p_0$ known } & \multicolumn{2}{c}{\large $p_0$ unknown} \\ \cline{2-5}
  & $ n \lll N^{2/3}$ & $n \ggg N^{2/3}$ & $n \lll N^{3/4}$ & $n \ggg N^{3/4}$ \\
\hline\hline 
$p_0\gg \frac{\log(N/n)}{n}$ & $R > 2 \sqrt{\log(N/n)}$ & $R > N/n^{3/2} $ &$R > 2 \sqrt{\log(N/n)}$ & $R > N^{3/4}/n$ \\
$p_0\ll \frac{\log(N/n)}{n}$ & $R >  \frac{2\log(N/n)}{\sqrt{np_0}\log\left(\frac{\log(N/n)}{np_0}\right)}$ & $R > N/n^{3/2} $ & $R > \frac{ 2\log(N/n)}{\sqrt{np_0}\log\left(\frac{\log(N/n)}{np_0}\right)}$ & $R > N^{3/4}/n$ \\
\hline
&{\sc Scan test} & {\sc Tot. Deg. test}&{\sc Scan test} & {\sc Deg. Var. test}
\end{tabular}
}

\end{table}

\begin{table}
\caption{Polynomial time algorithms}
\label{tab:poly}
\centering
{\small 
\begin{tabular}{c | c|| c |c }
 \multicolumn{2}{c||}{\large $p_0$ known } & \multicolumn{2}{c}{\large $p_0$ unknown} \\ \hline 
$n \lll \sqrt{N}$ & $n \ggg  \sqrt{N}$ & $n \lll \sqrt{N}$ & $n \ggg \sqrt{N}$ \\
\hline\hline 
$R > 2 \sqrt{N \log N}$ & $R > N/n^{3/2} $ & $R > 2 \sqrt{N \log N}$ & $R > N^{3/4}/n$ \\
\hline
{\sc Relax. Scan test} & {\sc Tot. Deg. test}& {\sc Relax. Scan test} & {\sc Deg. Var. test}
\end{tabular}
}
\end{table}

\subsection{Finding a clique}
We start the paper by addressing the problem of detecting the presence of a large clique in the graph, and treat it separately, as it is an interesting case in its own right.  It is simpler and allows us to focus on the regime where $n/\log N \to \infty$ in the rest of the paper.
We establish a lower bound and prove that the following (obvious) test achieves that bound:
\bitem
\item {\em Clique number test.}  This tests rejects when the size of the clique number of the graph is unusually large.  It can be calibrated without knowledge of $p_0$, for example by permutation, but we do not know of a polynomial-time algorithm that comes even close.
\eitem

\subsection{Content}

In \secref{clique}, we consider the problem of detecting the presence of a large clique and analyze the clique number test.  
In \secref{subgraph}, we consider the more general problem of detecting a densely connected subgraph and analyze the total degree test and the scan test.  The more realistic situation of unknown $p_0$ is handled in \secref{adapt}. In \secref{dense}, we investigate polynomial-time tests.
We then discuss our results and the outlook in \secref{discussion}.  
The technical proofs are postponed to \secref{proofs}.

\subsection{General assumptions and notation}
\label{sec:notation}

We assume throughout that $N \to \infty$ and the other parameters $n, p_0, p_1$ (and more) are allowed to change with $N$, unless specified otherwise.  This dependency is left implicit.  In particular, we assume that $n/N \to 0$, emphasizing the situation where the community to be detected is small compared to the size of the whole network.  (When $n$ is of the same order as $N$, the total degree test is basically optimal.)  We assume that $p_0$ is bounded away from 1, which is the most interesting case by far, and that $N^2 p_0 \to \infty$, the latter implying that the number of edges in the network (under the null) is not bounded.  We also hypothesize that either $p_1 = 1$ or $n \to \infty$ with $n^2 p_1 \to \infty$, there is a non-vanishing chance that the community does not contain any edges, precluding any test to be powerful.  

We use standard notation such as $a_n \sim b_n$ when $a_n/b_n \to 1$; $a_n = o(b_n)$ when $a_n/b_n \to 0$; $a_n = O(b_n)$ when $a_n/b_n$ is bounded; $a_n \asymp b_n$ when $a_n = O(b_n)$ and $b_n = O(a_n)$; $a_n\prec b_n$ when there exists a positive constant $C$ such that $a_n\leq Cb_n$ and   $a_n\succ b_n$ when there exists a positive constant $C$ such that $a_n\geq Cb_n$.
For an integer $n$ let $\nn = n(n-1)/2$.  For two distributions $L_1 $ and $L_2$ on the real line, let $L_1 * L_2$ denote their convolution, which is the distribution of the sum two independent random variables $X_1 \sim L_1$ and $X_2 \sim L_2$.  

Because of its importance in describing the tails of the binomial distribution, the following function --- which is the relative entropy or Kullback-Leibler divergence of ${\rm Bern}(q)$ to ${\rm Bern}(p)$ --- will appear in our results:
\beq \label{H}
H_p(q) = q \log \left(\frac{q}p\right) + (1-q) \log \left(\frac{1 -q}{1 -p}\right), \quad p, q \in (0,1).
\eeq

\section{Detecting a large clique in a random graph}
\label{sec:clique}

We start with specializing the setting to that of detecting a large clique, meaning we consider the special case where $p_1 = 1$.  In this section, $n$ is not necessarily increasing with $N$.

\subsection{Lower bound}
\label{sec:clique-lb}

We establish the detection boundary, giving sufficient conditions for the problem to be too hard for any test, meaning that all tests are asymptotically powerless.

\begin{thm} \label{thm:clique}
All tests are asymptotically powerless if 
\beq \label{clique1}
{N \choose n} p_0^{\frac{n (n-1)}2} \to \infty.
\eeq
\end{thm}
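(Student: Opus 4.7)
The plan is to prove the result by the standard Bayesian / second-moment lower bound. I would put the uniform prior $\pi$ on $n$-subsets $S \subset [N]$ and form the mixture alternative $\bar{\P}_1 := \binom{N}{n}^{-1}\sum_{|S|=n}\P_S$. Since $\gamma_N(T) \ge 1 - \|\P_0 - \bar{\P}_1\|_{\rm TV}$ for every test $T$, it suffices to show the total variation distance vanishes, and for that I would use $\|\P_0 - \bar{\P}_1\|_{\rm TV} \le \tfrac12\sqrt{\E_0[L^2]-1}$ with $L := d\bar{\P}_1/d\P_0$.

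Because $p_1 = 1$, the single-alternative likelihood takes a particularly transparent form: $d\P_S/d\P_0(G) = p_0^{-\binom{n}{2}}\,\IND{G[S]\text{ is a clique}}$, so the mixture likelihood collapses to $L(G) = X(G)/\lambda$, where $X(G)$ counts the $n$-cliques of $G$ and $\lambda := \binom{N}{n}p_0^{\binom{n}{2}}$ is exactly the quantity assumed to diverge. A direct computation, using $\E_0[\IND{G[S]\text{ is a clique}}\,\IND{G[S']\text{ is a clique}}] = p_0^{2\binom{n}{2}-\binom{|S\cap S'|}{2}}$ and grouping pairs $(S,S')$ by overlap size $k := |S\cap S'|$, yields
\[
\E_0[L^2]-1 \;=\; \sum_{k=2}^{n} h_k\!\left(p_0^{-\binom{k}{2}}-1\right), \qquad h_k := \frac{\binom{n}{k}\binom{N-n}{n-k}}{\binom{N}{n}},
\]
so the task reduces to showing that this sum tends to $0$ whenever $\lambda \to \infty$.

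The key estimate is $h_k \le \binom{n}{k}^2/\binom{N}{k}$, a consequence of log-convexity of the factorial (equivalent to $[(N-n)!]^2 \le (N-k)!(N-2n+k)!$ via Hardy--Littlewood--Pólya), which turns the $k$th term into at most $\binom{n}{k}^2/\lambda_k$ with $\lambda_k := \binom{N}{k}p_0^{\binom{k}{2}}$. The hypothesis translates via Stirling into $n/N = o(p_0^{(n-1)/2})$, and this together with $\lambda \to \infty$ directly controls the two extreme regimes: $k$ bounded (where the term behaves polynomially in $n/N$) and $k$ close to $n$ (where it is dominated by $1/\lambda_n \to 0$). The principal technical obstacle lies in the intermediate range of $k$, where neither $\binom{n}{k}^2$ nor $1/\lambda_k$ is individually small and one must exploit cancellation between them; I would handle this by analyzing the summand $f(k) := h_k p_0^{-\binom{k}{2}}$ through the explicit ratio $f(k+1)/f(k) = (n-k)^2 p_0^{-k}/\bigl((k+1)(N-2n+k+1)\bigr)$, which is monotone in $k$ and so allows $f$ on the interior to be bounded by its controlled boundary values $f(2)$ and $f(n)$. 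Should the unconditional second moment turn out to be slightly too loose near the sharp threshold, the standard remedy is to carry the same argument through on a truncated likelihood $L\,\IND{A}$, restricting to graphs on which the planted clique is not atypically overlapped by many other $n$-cliques --- a device standard in planted-clique lower bounds at the information-theoretic limit.
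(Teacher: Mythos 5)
Your overall strategy is the paper's: reduce to the Bayesian mixture under the uniform prior, observe that when $p_1 = 1$ the likelihood ratio $L$ is the normalized clique count, and show $\Var_0(L) \to 0$ by the second moment method. Your decomposition of $\E_0[L^2]$ over the overlap $k = |S \cap S'|$ is also exactly what the paper does (with $h_k = \P(K=k)$, $K \sim \mathrm{Hyp}(N,n,n)$). And the combinatorial estimate $h_k \le \binom{n}{k}^2/\binom{N}{k}$, via log-convexity of the factorial, is a perfectly valid alternative to the paper's route (stochastic domination of $\mathrm{Hyp}(N,n,n)$ by $\mathrm{Bin}(n,n/(N-n))$ plus Chernoff); both yield comparable tail control and the paper's observation that \eqref{clique1} forces $\log(N/n) - \tfrac{n-1}{2}\log(1/p_0)\to\infty$ and hence $n^2/N\to 0$.

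The gap is in the final step. You claim the ratio $r(k) := f(k+1)/f(k) = (n-k)^2 p_0^{-k}/\bigl[(k+1)(N-2n+k+1)\bigr]$ is monotone in $k$, and use this to bound $f$ on the interior by its endpoint values. That monotonicity is false in general: near $k = n$ the factor $(n-k)^2$ collapses so fast that the ratio can turn over, and a direct computation of the discrete derivative
\[
\log r(k+1) - \log r(k) = \log\tfrac{1}{p_0} - 2\log\tfrac{n-k}{n-k-1} - \log\tfrac{k+2}{k+1} - \log\tfrac{N-2n+k+2}{N-2n+k+1}
\]
shows the sign flips for moderate $p_0$ (e.g.\ $p_0 = 1/2$) once $k$ gets within a constant of $n$, because $2\log\tfrac{n-k}{n-k-1}$ exceeds $\log(1/p_0)$ there. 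So $r$ is not increasing, and bounding $f$ by its boundary values does not follow from your stated premise. What \emph{is} convex, and what the paper actually exploits, is the per-index exponent: after using the hypergeometric tail bound one gets $f(k) \lesssim \exp\bigl(k\,g(k)\bigr)$ with $g(k) = \tfrac{k-1}{2}\log(1/p_0) - \log(kN/n^2) + O(1)$, and $g(k)$ is of the form $ak - \log k + \text{const}$, hence convex in $k$ and so bounded on $[2,n]$ by $\max(g(2), g(n)) = -\omega$ with $\omega\to\infty$ under \eqref{clique1}. That yields the geometric decay $f(k) \lesssim e^{-\omega k}$ which makes the sum vanish. I would replace your ratio-monotonicity step with this convexity-of-the-exponent argument; your anticipated remedy (truncating the likelihood) addresses a different problem (looseness of the unconditional second moment near the threshold, relevant in \thmref{lower} but not here) and would not repair this particular gap.
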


The result is, in fact, very intuitive.  Condition \eqref{clique1} implies that, with high probability under the null, the clique number is at least $n$, which is the size of the implanted clique under the alternative.  This is a classical result in random graph theory, and finer results are known --- see \citep[Chap.~11]{MR1864966}.  The arguments underlying \thmref{clique} are, however, based on studying the likelihood ratio test when a uniform prior is assumed on the implanted clique $S$, which is the standard approach in detection settings; see \citep[Ch.~8]{TSH}.  In this specific setting, the second moment method --- which consists in showing that the variance of the likelihood ratio tends to 0 --- suffices.

\subsection{The clique number test}
\label{sec:clique-ub}

Computational considerations aside, the most natural test for detecting the presence of a clique is the clique number test defined in the Introduction.  We obtain the following.

\begin{prp} \label{prp:clique}
The clique number test is powerful if 
\beq \label{clique2}
{N \choose n} p_0^{\frac{n (n-1)}2} \to 0.
\eeq
\end{prp}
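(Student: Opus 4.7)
My plan is to formally define the clique number test as rejecting the null when the clique number of the observed graph is at least $n$, and then bound each of the two error probabilities separately. Under the alternative, the planted subgraph on $S$ (with $|S| = n$ and $p_1 = 1$) is a clique of size exactly $n$, so the clique number of $\cG$ is automatically at least $n$ and the test accepts with probability 0. Therefore $\max_{|S|=n} \P_S(T = 0) = 0$, and the entire risk reduces to controlling the type I error $\P_0(T = 1)$.

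For the type I error, I would apply a straightforward first-moment (Markov) argument. Let $X$ denote the number of subsets $A \subset \cV$ with $|A| = n$ such that the induced subgraph on $A$ is a clique in $\cG \sim \bbG(N, p_0)$. By linearity of expectation, since each fixed $n$-subset induces a clique with probability $p_0^{n(n-1)/2}$,
\[
\E_0[X] = \binom{N}{n} p_0^{n(n-1)/2}.
\]
The event that the clique number is at least $n$ is exactly $\{X \geq 1\}$, so Markov's inequality gives
\[
\P_0(T = 1) = \P_0(X \geq 1) \leq \E_0[X] = \binom{N}{n} p_0^{n(n-1)/2},
\]
and this tends to 0 by hypothesis \eqref{clique2}.

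Combining the two bounds, $\gamma_N(T) \to 0$, establishing asymptotic power. There is essentially no obstacle here: the argument is the standard first-moment bound on the existence of an $n$-clique in $\bbG(N, p_0)$, which is the exact matching counterpart of the second-moment lower bound used in \thmref{clique}. The only mild care needed is in noting that the test as stated is powerless to distinguish the planted clique from a spontaneously occurring one, but since condition \eqref{clique2} makes the latter vanishingly unlikely under the null, this is a non-issue. I would also remark, as a sanity check, that the gap between the lower bound \eqref{clique1} and the upper bound \eqref{clique2} is only in the rate at which $\binom{N}{n} p_0^{n(n-1)/2}$ tends to its limit, so the detection boundary is sharp up to a transition phase.
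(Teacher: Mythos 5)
Your proof is correct and matches the paper's (omitted) argument in substance: the paper remarks that under \eqref{clique2} the clique number under the null is at most $n-1$ with high probability (citing Bollob\'as, Th.~11.6), while under the alternative it is deterministically at least $n$; the cited fact is itself the first-moment bound you derive via Markov's inequality, so the two proofs coincide.
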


The proof is entirely based on the fact that, when \eqref{clique2} holds, the clique number under the null is at most $n-1$ with high probability \citep[Th.~11.6]{MR1864966}, while it is at least $n$ under the alternative.  (Thus the proof is omitted.)  We conclude that the clique number test is seen to achieve the detection boundary established in \thmref{clique}.

\section{Detecting a dense subgraph in a random graph}
\label{sec:subgraph}

We now consider the more general setting of detecting a dense subgraph in a random graph.  We start with an information bound that applies to all tests, regardless of their computational requirements.  We then study the total degree test and the scan test, showing that the test that combines them with a simple Bonferroni correction is essentially optimal.  

\subsection{Lower bound}
\label{sec:subgraph-lb}

When assuming infinite computational power, what is left is the purely statistical challenge of detecting the subgraph.  For simplicity, we assume that $n$ is not too small, specifically,
\beq \label{n-log}
\frac{n}{\log N} \to \infty,
\eeq
though our result below partially extends to this, particularly when $p_1$ is constant.   
As usual, a minimax lower bound is derived by choosing a prior over the composite alternative.  Assuming that $p_0$ and $p_1$ are known, because of symmetry, the uniform prior over the community $S$ is least favorable, so that we consider testing
\beq \label{model}
H_0: \cG \sim \bbG(N,p_0) \text{ versus } \bar{H}_1: \cG \sim \bbG(N,p_0; n,p_1),
\eeq
where the latter is the model where the community $S$ is chosen uniformly at random among subset of nodes of size $n$, and then for $i \ne j$, $\P(W_{i,j} = 1) = p_1$ if $i,j \in S$,  while $\P(W_{i,j} = 1) = p_0$ otherwise.
For this simple versus simple testing problem, the likelihood ratio test is optimal, which is what we examine to derive the following lower bound.  
Remember the entropy function defined in \eqref{H}.

\begin{thm} \label{thm:lower}
Assuming \eqref{n-log} and \eqref{n-p0} hold, all tests are asymptotically powerless if 
\beq \label{lower1}
\frac{p_1 - p_0}{\sqrt{p_0}} \, \frac{n^2}N \to 0,
\eeq
and
\beq \label{lower2}
\limsup\frac{n H(p_1)}{2 \log (N/n)} < 1.
\eeq
\end{thm}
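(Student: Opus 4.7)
I would follow the classical minimax-to-Bayes reduction: put a uniform prior on subsets $S \subset [N]$ with $|S| = n$ and work with the mixture alternative $\bar{\P}_1$. It suffices to show $\|\bar{\P}_1 - \P_0\|_{TV} \to 0$, and by Cauchy--Schwarz this follows from $\E_0[L^2] \to 1$, where $L = d\bar{\P}_1/d\P_0 = \binom{N}{n}^{-1}\sum_S L_S$ and $L_S = \prod_{\{i,j\}\subset S}(p_1/p_0)^{W_{ij}}((1-p_1)/(1-p_0))^{1-W_{ij}}$. Using independence of edges under $\P_0$ together with the fact that the cross moment $\E_0[L_S L_{S'}]$ factorises over $S\cap S'$, a standard computation gives
\[
\E_0[L^2] \;=\; \E\!\left[\Bigl(1 + \frac{(p_1-p_0)^2}{p_0(1-p_0)}\Bigr)^{\binom{K}{2}}\right],
\]
where $K = |S \cap S'|$ is hypergeometric with mean of order $n^2/N$. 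Writing $\theta := (p_1-p_0)^2/[p_0(1-p_0)]$, the $k=2$ contribution $\P(K=2)\cdot \theta \asymp \theta\,(n^2/N)^2$ is precisely what hypothesis~\eqref{lower1} forces to $0$, which handles the \emph{central} part of the expectation (typical values of $K$, via the Taylor expansion $(1+\theta)^{\binom{K}{2}}-1\approx \theta\binom{K}{2}$).

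The obstacle is the large-$K$ tail. The naive deterministic bound $(1+\theta)^{\binom{k}{2}}\le \exp(\theta k^2/2)$ ultimately imposes a R\'enyi-$2$-type condition equivalent (in the quasi-normal regime, where $\log(1+\theta)\sim 2H_{p_0}(p_1)$ when $p_1-p_0$ is small) to $nH_{p_0}(p_1)\lesssim \log(N/n)$, twice as strong as the constant permitted by~\eqref{lower2}. To recover the correct factor I would pass to a \emph{conditional} second moment: replace $L$ by $L\,\mathbbm{1}_{\mathcal{E}}$, where
\[
\mathcal{E} \;=\; \Big\{\,\text{no $n$-subset $T\subset[N]$ contains more than $\binom{n}{2}p^\star$ edges}\,\Big\},
\]
with $p^\star > p_1$ chosen just large enough that the first-moment bound $\binom{N}{n}\exp(-\binom{n}{2}H_{p_0}(p^\star))\to 0$ holds---possible by continuity of $H_{p_0}$ together with the strict limsup in~\eqref{lower2}---yet small enough that the Chernoff bound for $\mathrm{Bin}(\binom{n}{2},p_1)$ delivers $\bar{\P}_1(\mathcal{E}^c)\to 0$. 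The conditional moment $\E_0[L_S L_{S'}\,\mathbbm{1}_{\mathcal{E}}\mid K=k]$ then decouples into a truncated binomial MGF whose exponential rate is driven by $H_{p_0}(p^\star)$, not by $\log(1+\theta)$, which is what makes the tail sum over $k$ tractable.

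The main obstacles I anticipate are: (i) choosing $p^\star$ so that $\P_0(\mathcal{E}^c)\to 0$ and $\bar{\P}_1(\mathcal{E}^c)\to 0$ simultaneously, uniformly in the sparse regime~\eqref{n-p0}---in particular when $np_0\to 0$, where the Chernoff bound for $\mathrm{Bin}(\binom{n}{2},p_0)$ must still extract the $H_{p_0}$ rate; this is where assumption~\eqref{n-p0} enters, providing slack $\log(1/(np_0))=o(\log(N/n))$ to be absorbed; (ii) performing the truncated moment computation cleanly, reducing it to a one-dimensional bound in $k$; and (iii) splitting the resulting sum over $k$ into a central range ($k\lesssim n^2/N$, handled by~\eqref{lower1} via the variance of the hypergeometric) and a tail range (handled by the truncation together with~\eqref{lower2}). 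Assumption~\eqref{n-log} then serves to ensure $\log N\ll n$ and so to rule out binomial-tail pathologies at small $k$.
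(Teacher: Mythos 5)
Your high-level plan is the right one---a truncated (conditional) second moment \`a la Ingster, a uniform prior on $S$, a Cauchy--Schwarz reduction to $\E_0[L^2]$, and the observation that the naive untruncated bound $\exp(\Delta\binom{K}{2})$ with $\Delta=\log(1+(p_1-p_0)^2/(p_0(1-p_0)))$ loses a factor of two in the entropy condition. This matches the paper. The gap is in the truncation event itself: you propose a \emph{single-scale} event $\mathcal{E}$ constraining only $n$-subsets ($W_T\le\binom{n}{2}p^\star$ for all $|T|=n$). After factoring out the centered tilts on $W_{S_1\times(S_1\setminus S_2)}$ and $W_{S_2\times(S_2\setminus S_1)}$, the term that blows up in the second moment is $\E_0[\exp(2\theta W_{S_1\cap S_2}-2\Lambda(\theta)\binom{K}{2})\,\mathbbm{1}]$, driven entirely by the subgraph on $S_1\cap S_2$ of size $K<n$. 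Your $\mathcal{E}$ implies only $W_{S_1\cap S_2}\le\binom{n}{2}p^\star$, which is useless once $\binom{K}{2}\ll\binom{n}{2}p^\star$. Concretely, in the quasi-normal case $p_1/p_0\to 1$ the effective truncation density $\binom{n}{2}p^\star/\binom{K}{2}-p_0\approx((n/K)^2-1)p_0$ already exceeds $\tilde q-p_0\approx 2(p_1-p_0)$ for intermediate $K$ (e.g.\ $K\approx n/2$) whenever $p_0\gg p_1-p_0$, where $\tilde q$ is the conjugate point with $\theta_{\tilde q}=2\theta$; hence the truncated MGF collapses to the untruncated one and you recover the same factor-of-two loss you were trying to avoid. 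So the claim that the conditional moment ``decouples into a truncated binomial MGF whose exponential rate is driven by $H_{p_0}(p^\star)$'' does not hold at these scales.

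What the paper does instead is a \emph{multi-scale} truncation: $\Gamma_S=\bigcap_{k=k_{\min}+1}^{n}\{W_T\le w_k\ \forall\,T\subset S,\,|T|=k\}$ with $w_k=q_k\binom{k}{2}$ and $q_k$ chosen scale-by-scale via the first-moment entropy balance $\frac{k-1}{2}H(q_k)=\log(N/k)+2$. Then $\Gamma_{S_1}\cap\Gamma_{S_2}$ directly gives $W_{S_1\cap S_2}\le w_K$ with the \emph{correct} threshold at scale $K$, and the variational step yields the improved rate $\Delta_K=-2H_{p_1}(q_K)+H(q_K)$, which is what closes the factor of two against the hypergeometric tail. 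You would also need to make the threshold per-$S$ (truncate $L_S\mathbbm{1}_{\Gamma_S}$ term by term inside the mixture, not $L\mathbbm{1}_{\mathcal E}$ globally) so that you only need $\P_S(\Gamma_S)\to 1$ rather than $\P_0(\Gamma)\to 1$; and the claims $q_k\in(p_0,\tilde q)$ and the uniform-in-$k$ lower bound on $H_{p_1}(q_k)$ are where assumptions \eqref{n-log} and \eqref{n-p0} and the case analysis $p_1/p_0\to 1$, $\to r\in(1,\infty)$, $\to\infty$ do real work.
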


Conditions \eqref{lower1} and \eqref{lower2} have their equivalent in the work of \cite{1109.0898}.   That said, \eqref{lower2} is more complex here because of the different behaviors of the entropy function according to whether $p_1/p_0$ is small or large --- corresponding to the difference between large deviations and moderate deviations of the binomial distribution.  Only in the case where $p_1/p_0 \to 1$ is the normal approximation to the binomial in effect.  

To better appreciate \eqref{lower2}, note that it is equivalent to 
\beq \label{lower2a}
\limsup \frac{(p_1 - p_0)^2}{4 p_0 (1-p_0)} \, \frac{n}{\log (N/n)} < 1, \quad \text{ when } \frac{n p_0}{\log (N/n)} \to \infty;
\eeq
and
\beq \label{lower2b}
\limsup \frac{p_1}{2 (1-p_0)} \, \frac{n}{\log (N/n)} \log\left(\frac{\log (N/n)}{n p_0}\right) < 1, \quad \text{ when } \frac{n p_0}{\log (N/n)} \to 0.
\eeq
In \eqref{lower2a}, $n p_0$ is larger and only the moderate deviations of the binomial distribution are involved, while in \eqref{lower2b}, $n p_0$ is smaller and the large deviations come into play.  

\thmref{lower} happens to be sharp because, as we show next, the test that combines the total degree test and the scan test is asymptotically powerful when the conditions \eqref{lower1} and \eqref{lower2} are --- roughly speaking --- reversed.

\subsection{The total degree test}
\label{sec:total}

The total degree test rejects for large values of 
\beq \label{total-stat}
\tot := \sum_{1 \leq i < j \leq N} W_{i,j}.
\eeq
The resulting test is exceedingly simple to analyze, since  
\beq \label{total-dist}
\tot \sim \Bin(\NN - \nn, p_0) \ * \ \Bin(\nn, p_1). 
\eeq

\begin{prp} \label{prp:total}
The total degree tests is powerful if 
\beq \label{total}
\frac{p_1 - p_0}{\sqrt{p_0}} \, \frac{n^2}N \to \infty. 
\eeq
\end{prp}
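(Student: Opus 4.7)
The plan is to apply Chebyshev's inequality to the statistic $\tot$, with threshold set at the midpoint between its expectations under $\P_0$ and under a generic $\P_S$ with $|S|=n$. From \eqref{total-dist}, $\E_0 \tot = \NN p_0$ and $\E_S \tot = \NN p_0 + \nn (p_1-p_0)$, while the variances are
\[
\Var_0 \tot = \NN p_0 (1-p_0), \qquad \Var_S \tot \leq \NN p_0 (1-p_0) + \nn\, p_1 (1-p_1).
\]
I would take the threshold $t := \NN p_0 + \tfrac12 \nn (p_1-p_0)$, so that the gap to either mean equals $\tfrac12 \nn(p_1-p_0)$.

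Applying Chebyshev on each side yields
\[
\gamma_N(T) \leq \frac{8\NN p_0 (1-p_0) + 4\nn\, p_1(1-p_1)}{[\nn(p_1-p_0)]^2}.
\]
The first summand is of order $\frac{N^2 p_0}{n^4 (p_1-p_0)^2}$, which equals (up to constants, and using $1-p_0 \asymp 1$) the square of the reciprocal of the quantity appearing in \eqref{total}; it therefore tends to zero under the hypothesis. Only the remaining piece, of order $\frac{p_1 (1-p_1)}{\nn (p_1-p_0)^2}$, requires extra care, because when $p_1 \gg p_0$ the planted edges themselves inflate $\Var_S \tot$.

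The main (and really the only) obstacle is this second term, and I would dispatch it by a short case split. When $p_1 \leq 2 p_0$, bound $p_1(1-p_1) \leq 2 p_0$ and use $\nn \leq \NN$ to absorb the term into the null-variance contribution already shown to vanish. When $p_1 > 2 p_0$, use $p_1 - p_0 \geq p_1/2$ to bound the term by a constant multiple of $1/(n^2 p_1)$, which tends to zero by the standing assumption $n^2 p_1 \to \infty$ from \secref{notation}. Combining the two regimes gives $\gamma_N(T) \to 0$, establishing the claim.
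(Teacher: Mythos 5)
Your proposal is correct and follows essentially the same route as the paper: both are Chebyshev-type second-moment arguments comparing the gap in means $\nn(p_1-p_0)$ to the square roots of the null and alternative variances (the paper packages this via Lemma~\ref{lem:cheb}, you do it directly with a midpoint threshold). Your explicit case split on $p_1 \lessgtr 2p_0$ to control the extra alternative-variance term $\nn p_1(1-p_1)$ is in fact the justification that is left implicit in the paper's ``$\asymp$'' step, so nothing is missing.
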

It is equally straightforward to show that the total degree has risk strictly less than one --- meaning has some non-negligible power --- when the same ratio tends to a positive and finite constant, while it is asymptotically powerless when that ratio tends to zero.

\subsection{The scan test}
\label{sec:scan}

The scan test is another name for the generalized likelihood ratio test, and corresponds to the test that is based on the maximum modularity.  It is particularly simple when $p_0$ is known, as it rejects for large values of
\beq \label{scan-stat}
\scan := \max_{|S| = n} W_S, \qquad W_S := \sum_{i,j\in S, i < j} W_{i,j}.
\eeq

Unlike the total degree \eqref{total-stat}, the scan statistic \eqref{scan-stat} has an intricate distribution as the partial sums $W_S$ are not independent.  Nevertheless, the union bound and standard tail bounds for the binomial distribution lead to the following result.

\begin{prp} \label{prp:scan}
The scan test is powerful if 
\beq \label{scan}
\liminf \frac{n H(p_1)}{2 \log (N/n)} > 1.
\eeq
\end{prp}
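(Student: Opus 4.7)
The plan is to bound the null and alternative tails of $\scan$ using Chernoff inequalities for the binomial distribution, together with a union bound over the $\binom{N}{n}$ candidate subsets. Write $H := H_{p_0}$ for the entropy function appearing in \eqref{scan}. By hypothesis, there is $\eps>0$ such that $n H(p_1) \geq 2(1+\eps)\log(N/n)$ for all $N$ large enough; this slack is what will allow a single threshold to simultaneously kill the Type I and Type II errors.

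First I would pick the threshold. Since $H(p_0)=0$, $H$ is continuous and strictly increasing on $[p_0,1)$, and $H(p_1)$ exceeds $\tfrac{2(1+\eps/2)\log(N/n)}{n}$ by a definite margin, the intermediate value theorem produces $q \in (p_0, p_1)$ with
\[
H(q) \;=\; \frac{2(1+\eps/2) \log(N/n)}{n}.
\]
Take the rejection threshold to be $t_N := \nn q$.

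Next I would control the null. Under $H_0$, each $W_S \sim \Bin(\nn, p_0)$, so the Cram\'er--Chernoff bound gives $\P_0(W_S \geq \nn q) \leq \exp(-\nn H(q))$. A union bound over the $\binom{N}{n} \leq (eN/n)^n$ subsets of size $n$ then yields
\[
\P_0(\scan \geq t_N) \;\leq\; \exp\!\Bigl( n + n \log(N/n) - (n-1)(1+\eps/2)\log(N/n) \Bigr),
\]
whose exponent is $-\tfrac{\eps}{2}\, n \log(N/n) (1+o(1)) \to -\infty$ because $N/n \to \infty$.

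Finally I would control the alternative. Let $S^*$ be the planted community; then $\scan \geq W_{S^*} \sim \Bin(\nn, p_1)$. Since $q < p_1$, a second Chernoff bound gives $\P_{S^*}(W_{S^*} \leq \nn q) \leq \exp(-\nn H_{p_1}(q))$, and it suffices to check that $\nn H_{p_1}(q) \to \infty$. To do this, I would estimate the gap $p_1 - q$ from the defining equation $H(q) = \tfrac{2(1+\eps/2)\log(N/n)}{n}$ together with the lower bound $H(p_1) \geq \tfrac{2(1+\eps)\log(N/n)}{n}$, and convert it into a lower bound on $H_{p_1}(q)$. The main obstacle is that this has to be carried out in two qualitatively different regimes of $H$: the moderate-deviation regime $n p_0 \gtrsim \log(N/n)$ where $H(x) \approx (x-p_0)^2/[2p_0(1-p_0)]$ — corresponding to the normal approximation \eqref{lower2a} — and the large-deviation regime $np_0 \ll \log(N/n)$ where the logarithmic terms of \eqref{lower2b} dominate. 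In each regime one uses the appropriate Taylor or asymptotic expansion of $H$ to turn the slack between $H(p_1)$ and $H(q)$ into a quantitative gap $p_1 - q$, and then verifies that the quasi-normal assumption \eqref{n-p0} (and \eqref{n-log}) is exactly what forces $\nn H_{p_1}(q) \to \infty$. This case analysis — rather than the union bound itself — is the real technical content of the proof.
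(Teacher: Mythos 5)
Your null-side analysis (union bound plus Chernoff, then plugging in $\log{N\choose n}\le n\log(eN/n)$) is exactly the paper's, and your computation of the exponent is correct. Where you diverge is in the choice of threshold and the alternative-side argument, and this is where the paper has a noticeably shorter route. The paper takes the explicit threshold $a\nn$ with $a=\eta p_0+(1-\eta)p_1$ for a small fixed $\eta$ (Lemma~\ref{lem:H} guarantees that $\liminf nH(a)/(2\log(N/n))>1$ for $\eta$ small enough), and then on the alternative side avoids Chernoff entirely: since $W_{S^*}\sim\Bin(\nn,p_1)$ and $p_1\nn\to\infty$, Chebyshev gives $W_{S^*}=p_1\nn+O_P(\sqrt{p_1\nn})$, and it suffices to check $(p_1-a)\nn\gg\sqrt{p_1\nn}$, i.e.\ $\eta(p_1-p_0)n^2\gg\sqrt{p_1n^2}$, a two-line verification (one case for $p_1/p_0\to1$, one for $p_1/p_0$ bounded away from $1$). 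Your version instead defines $q$ implicitly by $H(q)=2(1+\eps/2)\log(N/n)/n$ and applies a second Chernoff bound, which requires $\nn H_{p_1}(q)\to\infty$. This plan is sound — and in fact yields an exponentially small type~II error rather than just $o(1)$ — but you have pushed all the difficulty into the regime analysis of $H_{p_1}(q)$, which you leave as a sketch. It does go through: when $p_1/p_0\to1$ the ratio bound $H(q)/H(p_1)\le(1+\eps/2)/(1+\eps)$ keeps $(q-p_0)/(p_1-p_0)$ bounded below $1$, so $H_{p_1}(q)\asymp H(p_1)$ and $\nn H_{p_1}(q)\succ n\log(N/n)\to\infty$; when $p_1/p_0$ is bounded away from $1$, one gets $q/p_1$ bounded below $1$ and $H_{p_1}(q)\asymp p_1$, so $\nn H_{p_1}(q)\asymp n^2p_1\to\infty$ by the standing assumption $n^2p_1\to\infty$ from Section~\ref{sec:notation}. (You do not in fact need \eqref{n-log} for this proposition, and \eqref{n-p0} is only implicitly used to rule out degenerate parameter regimes.) So your route works, but the paper's Chebyshev-based alternative-side argument sidesteps the $H_{p_1}(q)$ estimation entirely, which is precisely what made the paper's choice of threshold so convenient.
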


\subsection{The combined test}
\label{sec:combined}

Having studied these two tests individually, we are now in a position to consider them together, by which we mean a simple Bonferroni combination which rejects when either of the two tests rejects.  Looking back at our lower bound and the performance bounds we established for these tests, we come to the following conclusion.  When the limit in \eqref{lower1} is infinite --- yielding \eqref{total} --- then the total degree test is asymptotically powerful by \prpref{total}.  When the limit inferior in \eqref{lower2} exceeds one --- yielding \eqref{scan} --- then the scan test is asymptotically powerful by \prpref{scan}.

\subsection{Adaptation to unknown $n$}
\label{sec:n-adapt}

 The scan statistic in \eqref{scan-stat} requires knowledge of $n$.  When this is unknown, the common procedure is to combine the scan tests at all different sizes $n$ using a simple Bonferroni correction.  This is done in \citep{1109.0898}, with the conclusion that the resulting test is essentially as powerful as the individual tests.  It is straightforward to see that, here too, the tail bound used in the proof of \prpref{scan} allows for enough room to scan over all subgraphs of all sizes.

\section{When $p_0$ is unknown: the fixed expected total degree model}
\label{sec:adapt}

Although it leads to interesting mathematics, the setting where $p_0$ is known is, for the most part, impractical.  In this section, we evaluate how not knowing $p_0$ changes the difficulty of the problem.  In fact, it makes the problem strictly more difficult in the denser regime.

There are (at least) two ways of formalizing the situation where $p_0$ is unknown.
In the first option, we still consider the exact same hypothesis testing problem, but maximize the risk over relevant subsets of $p_0$'s and $p_1$'s, since now even the null hypothesis is composite.
In the second option --- which is the one we detail --- for a given pair of probabilities $0 < p'_0 \le p_1 < 1$, we consider testing 
\beq \label{fixed_degree}
H_0: \cG \sim \bbG(N,p_0) \text{ versus } \bar{H}'_1: \cG \sim \bbG(N, p'_0; n, p_1), \quad p_0 := p'_0+(p_1-p'_0)\frac{n^{(2)}}{N^{(2)}}.
\eeq
Note that, in this setting, we still assume that $p_0, p_1, n$ are known to the statistician.
By design, the graph has the same expected total degree under the null and under the alternative hypotheses, that is we have 
\[\mathbb{E}_0(W)= N^{(2)}p_0+n^{(2)}(1-p_0)= \mathbb{E}'_S(W), \ \forall S: |S|=n,\]
where $\P'_S$ and $\E'_S$ denote the probability distribution and corresponding expectation under the model where, for any $i\neq j$, $\P(W_{i,j}=1)=p_1$ if $i,j\in S$, while $\P(W_{i,j}=1)=p'_0$ otherwise. 

The risk of  a test $T$ for this problem is defined as
\[\gamma_N'(T)=\mathbb{P}_{0}(T=1)+ \max_{|S|=n}\mathbb{P}'_S(T=0)\ .\]
We say that the a sequence of tests $(T_N)$ is asymptotically powerful for the problem with fixed expected total degree (resp. powerless) if $\gamma_N'(T_N)\rightarrow 0$ (resp. $\gamma_N'(T_N)\rightarrow 1$).

We first compute the detection boundary for this problem and then exhibit some tests achieving this detection boundary. 
Interestingly, these tests do not require the knowledge of $p_0$ and $p_1$, or even $n$, so that they can be used in the original setting \eqref{model} when these parameters are unknown. 

\subsection{Lower bound}
\begin{thm} \label{thm:lower_unknownp_0}
Assuming \eqref{n-log} holds and that
\beq \label{n-p0_unknown}
\log\left(1 \vee \frac{1}{np'_0}\right)=o\left[\log\left(\frac{N}{n}\right)\right]\ ,
\eeq
all tests  are asymptotically powerless for the problem \eqref{fixed_degree} if 
\beq \label{lower1_eq_unknown}
\frac{p_1-p'_0}{\sqrt{p'_0}}\frac{n^{3/2}}{N^{3/4}}\rightarrow 0 
\eeq
and
\beq \label{lower2_eq_unknown}
\limsup\frac{n H_{p'_0}(p_1)}{2 \log (N/n)} < 1.
\eeq
\end{thm}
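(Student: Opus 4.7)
I would follow the classical Bayesian reduction and second-moment (chi-square) argument of Ingster and Le Cam. Placing the uniform prior on $S \in \binom{[N]}{n}$, let $L_N = \binom{N}{n}^{-1}\sum_{|S|=n}\mathrm{LR}_S$ be the resulting likelihood ratio between $\bar H'_1$ and $H_0$, where $\mathrm{LR}_S$ is the edge-wise Bernoulli ratio assigning probability $p_1$ to edges inside $S$ and $p'_0$ elsewhere. Since $\gamma'_N(T) \ge 1 - \tfrac{1}{2}\sqrt{\mathbb{E}_0[L_N^2]-1}$ for every test $T$, asymptotic powerlessness of every test reduces to showing $\mathbb{E}_0[L_N^2] \to 1$.

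Expanding, $\mathbb{E}_0[L_N^2] = \binom{N}{n}^{-2}\sum_{S,S'}\mathbb{E}_0[\mathrm{LR}_S\mathrm{LR}_{S'}]$; by edge-independence, the inner expectation factorises into per-edge contributions depending only on $K := |S\cap S'|$. Classifying edges by their membership in $S$ and $S'$ and using the identity $\tfrac{xy}{z}+\tfrac{(1-x)(1-y)}{1-z}=1+\tfrac{(x-z)(y-z)}{z(1-z)}$, the factors are
\[
a:=1+(1-\alpha)^2r,\quad b:=1-\alpha(1-\alpha)r,\quad c:=1+\alpha^2 r,
\]
with $\alpha:=n^{(2)}/N^{(2)}$ and $r:=(p_1-p'_0)^2/[p_0(1-p_0)]$. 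This yields
\[
\mathbb{E}_0[L_N^2] \;=\; M \cdot \mathbb{E}_K\!\left[(ac/b^2)^{K^{(2)}}\right], \qquad M := (b^2/c^2)^{n^{(2)}}\,c^{N^{(2)}},
\]
where $K$ is hypergeometric with $\mathbb{E}[K^{(j)}] = (n^{(j)})^2/N^{(j)}$. The new ingredient compared with \thmref{lower} is the $c$-factor from edges outside both communities, which is nontrivial precisely because $p_0 \ne p'_0$. The algebraic identity $N^{(2)}\alpha^2 = n^{(2)}\alpha$ produces exact cancellations in $\log M$, yielding $\log M = -n^{(2)}\alpha r + O(n^{(2)}\alpha^2 r^2)$; this exponentially small deterministic factor is the quantitative signature of the ``fixed expected total degree'' constraint and is exactly what promotes the degree-variance test as the first-order competitor.

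To control the hypergeometric expectation I would split into a bulk $\{K\le K_0\}$ and a tail $\{K>K_0\}$, where $K_0$ is a few standard deviations above $\mathbb{E}[K]=n^2/N$. On the bulk, Taylor expansion of the exponent yields an $O(r)$ term equal to $K^{(2)} - n^{(2)}\alpha$, which vanishes in expectation by the identity $\mathbb{E}[K^{(2)}] = n^{(2)}\alpha$; the remaining correction is governed by $r^2\mathrm{Var}(K^{(2)})$. Since $\mathrm{Var}(K^{(2)}) \asymp (\mathbb{E}[K])^3 = n^6/N^3$, this correction is $O((rn^3/N^{3/2})^2)$, which is $o(1)$ precisely under condition \eqref{lower1_eq_unknown}. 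On the tail I would combine $\mathbb{P}(K=k) \le \binom{n}{k}(n/N)^k$ with $(ac/b^2)^{k^{(2)}} \le e^{k^{(2)} r/b^2}$ and rewrite the resulting exponent via the entropy $H_{p'_0}$, invoking \eqref{n-p0_unknown} to handle the crossover between the moderate- and large-deviation regimes of the binomial; condition \eqref{lower2_eq_unknown} then produces an $o(1)$ tail contribution.

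The main obstacle I anticipate is that a plain moments-based bound --- which treats $\mathbb{E}[(K^{(2)})^m]$ separately for each $m$ rather than exploiting the Gaussian concentration of $K^{(2)}$ about its mean --- imposes the strictly stronger condition $rn^4/N^2 \to 0$ in place of \eqref{lower1_eq_unknown}. To recover the sharp condition, one must either use the MGF/CGF directly at the Gaussian bulk, or equivalently truncate the Bayesian mixture to $\{K \le K_0\}$ and decompose $\|\mathbb{P}_0 - \bar{\mathbb{P}}'_1\|_{\mathrm{TV}} \le \bar{\mathbb{P}}'_1(K > K_0) + \sqrt{\mathbb{E}_0[\tilde L_N^2] - 1}$ for the truncated likelihood $\tilde L_N$. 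Calibrating $K_0$ simultaneously against \eqref{lower1_eq_unknown} (for the truncated second moment) and \eqref{lower2_eq_unknown} (for the hypergeometric tail) is the most delicate step, mirroring the truncation in the proof of \thmref{lower}, now with the additional bookkeeping required by the factor $M$ and its cancellations.
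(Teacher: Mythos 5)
Your algebraic reduction is correct and actually offers a clean framing of the paper's key mechanism: the factorisation into the three edge-types $a,b,c$, the identity $N^{(2)}\alpha^2 = n^{(2)}\alpha$, and the resulting cancellation $\log M = -n^{(2)}\alpha r + O(n^{(2)}\alpha^2 r^2)$ are precisely what the paper encodes through the bound ${\rm I}\cdot{\rm II}\le V_K$. The variance heuristic $r^2\,\Var(K^{(2)}) \asymp (r n^3/N^{3/2})^2$ is also the right computation for the bulk of $K$ and correctly produces condition \eqref{lower1_eq_unknown}.

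The genuine gap is in the truncation scheme. You propose to truncate the Bayesian mixture on $\{K\le K_0\}$ and write $\|\P_0 - \bar\P'_1\|_{\rm TV} \le \bar\P'_1(K>K_0) + \sqrt{\E_0[\tilde L_N^2]-1}$. But $K=|S_1\cap S_2|$ is a latent pairwise quantity that arises only inside the second-moment computation; it is neither a function of the observed graph $\bW$ nor a function of a single draw $S\sim\pi$, so $\{K\le K_0\}$ is not an event you can use to define a truncated likelihood. The paper instead truncates on the data events $\Gamma_S = \bigcap_{k>k_{\min}}\{W_T\le w_k,\ \forall\,T\subset S,\,|T|=k\}$, carried over unchanged from the proof of Theorem~\ref{thm:lower}. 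This is not cosmetic: once the entropy condition \eqref{lower2_eq_unknown} is the binding constraint --- and in particular in the large-deviation regime $p_1/p_0\to\infty$ --- your untruncated tail bound $\sum_{k>K_0}\binom{n}{k}(n/N)^k e^{k^{(2)}r/b^2}$ diverges no matter how $K_0$ is chosen (the paper notes explicitly that the plain second moment ``does not provide any useful bound whatsoever'' in this case, and in the moderate-deviation regime it loses a factor of~$2$ in the constant). What the $\Gamma_S$-truncation buys is the replacement of $\Delta=\log(1+r)$ by $\Delta_K = -2H_{p_1}(q_K)+H(q_K)$ in the exponent for $K>k_{\min}$, and that replacement is precisely what turns \eqref{lower2_eq_unknown} into a usable tail bound. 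The paper then combines this with a finer four-way split of $K$ (with the thresholds $k'_0,k_0,k_{\min}$), applying your bulk/variance argument on the first two ranges and the $\Delta_K$-bound on the last two. To make your proof go through you should adopt the $\Gamma_S$-truncation of the data (and hence Lemmas~\ref{lem:kmin}--\ref{lem:ent_p1_qk}) rather than attempting to truncate on $K$.
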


Comparing with \thmref{lower}, where $p_0$ is assumed to be known, 
the condition \eqref{lower1_eq_unknown} is substantially weaker than the corresponding condition \eqref{lower1}, while we shall see in the proof that \eqref{lower2_eq_unknown} is comparable to \eqref{lower2}.
That said, when $n^2<N$, the entropy condition \eqref{lower2} is a stronger requirement than either \eqref{lower1} or \eqref{lower1_eq_unknown}, implying that the setting where $p_0$ is known and the setting where unknown are asymptotically as difficult in that case.

\subsection{Degree variance test} 

By construction, the total degree $W$ has the same expectation under the null and under the alternative in the testing problem with fixed expected total degree --- and same variance also up to second order --- making it difficult to see how to fruitfully use this statistic in this context.  

We design instead a test based on comparing the two estimators for the node degree variance, not unlike an analysis of variance.
Let 
\beq \label{degree}
W_{i\cdot}=\sum_{j\neq i} W_{i,j}
\eeq
denote the degree of node $i$ in the whole network.
The first estimate is simply the maximum likelihood estimator under the null
\[
V_1 = (N-1) \frac{\NN}{\NN-1} \hat p_0 (1- \hat p_0), \quad \hat p_0 := \frac{W}{\NN}.
\]
The second estimator is some sort of sample variance, modified to account for the fact that the $W_{i\cdot}$ are not independent
\[
V_2 = \frac{1}{N-2}\sum_{i=1}^N\left(W_{i\cdot}-(N-1)\hat{p}_0\right)^2.
\]
Both estimators are unbiased for the degree variance under the null, meaning, $\E_0 V_1 = \E_0 V_2 = (N-1) p_0(1-p_0)$.
Under the alternative, $V_2$ tends to be larger than $V_1$, leading to a test that rejects for large values of
\beq \label{Vstar}
V^* := \frac{V}{\sqrt{N}\hat{p}_0}, \quad V := V_2 - V_1.
\eeq

\begin{prp}\label{prp:degre_variance} Assume that $p_0\succ 1/N$. The degree variance test is asymptotically powerful under fixed expected total degree if
\beq \label{degre_variance}
\frac{(p_1-p'_0)^2}{p'_0}\frac{n^3}{N^{3/2}}\rightarrow \infty
\eeq
\end{prp}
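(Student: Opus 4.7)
The plan is a standard Chebyshev argument: show that $\E'_S[V^*]$ diverges at the rate \eqref{degre_variance} while $\Var_0(V^*)=O(1)$ and $\Var'_S(V^*)=O(1)+o((\E'_S[V^*])^2)$, after which a threshold of $\E'_S[V^*]/2$ separates the null from the alternative.

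\textbf{Signal under the alternative.} Decomposing $W_{i\cdot}=\mu'_i+Z_i$ with $\mu'_i:=\E'_S[W_{i\cdot}]$, one finds $\mu'_i=(N-1)p'_0+(n-1)(p_1-p'_0)$ for $i\in S$ and $\mu'_i=(N-1)p'_0$ for $i\notin S$, while by construction of $p_0$ in \eqref{fixed_degree}, $\bar\mu':=N^{-1}\sum_i\mu'_i=(N-1)p_0$. A direct computation gives
\[
\sum_{i=1}^N(\mu'_i-\bar\mu')^2=\frac{n(n-1)^2(N-n)(p_1-p'_0)^2}{N}\sim n^3(p_1-p'_0)^2.
\]
Using the decomposition $\E[\sum_i(X_i-\bar X)^2]=\sum_i\Var(X_i)-N\Var(\bar X)+\sum_i(\mu_i-\bar\mu)^2$, and noting that the first two terms match $(N-2)\E'_S[V_1]$ up to negligible perturbations (the degree variances $\Var'_S(W_{i\cdot})$ differ from $(N-1)p'_0(1-p'_0)$ only on the $n$ coordinates of $S$, and the $\Var'_S(\hat p_0)$ bias in $V_1$ is $O(N^{-1})$), one obtains $\E'_S[V]\sim n^3(p_1-p'_0)^2/N$. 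Since $p_0\succ 1/N$ forces $\hat p_0\to p_0\sim p'_0$ in probability, dividing by $\sqrt N\hat p_0$ yields $\E'_S[V^*]\asymp n^3(p_1-p'_0)^2/(N^{3/2}p'_0)\to\infty$ by \eqref{degre_variance}.

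\textbf{Variance under the null.} The key algebraic identity, obtained from $\sum_i W_{i\cdot}^2=2W+2P_2$ with $P_2=\sum_v\binom{W_{v\cdot}}{2}$, is
\[
V_2-V_1=\frac{2P_2}{N-2}-\frac{4W(W-1)}{(N-2)(N+1)}=\frac{2(N-3)P_2-8Q}{(N-2)(N+1)},
\]
where $Q$ counts unordered pairs of disjoint edges and $P_2+Q=\binom{W}{2}$. Writing $Y:=(N-3)P_2-4Q=\sum_{\{e,f\}}c_{e,f}W_eW_f$ with $c_{e,f}=N-3$ when $e,f$ share a vertex and $c_{e,f}=-4$ otherwise, a fourth-moment calculation, classified by the overlap pattern of two edge pairs, gives
\[
\Var_0(Y)=(p_0^2-p_0^4)\tfrac{A_1}{2}+(p_0^3-p_0^4)(A_2-A_1),
\]
where $A_1=\sum_s\sum_{f\neq s}c_{s,f}^2$ and $A_2=\sum_s(\sum_{f\neq s}c_{s,f})^2$. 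The crucial cancellation is the identity $\sum_{f\neq s}c_{s,f}=(N-3)(2N-4)-4(N^{(2)}-2N+3)=0$ for every edge $s$, forcing $A_2=0$. A short enumeration gives $A_1\asymp N^5$, so $\Var_0(Y)\asymp N^5p_0^2(1-p_0)^2$, hence $\Var_0(V_2-V_1)\asymp Np_0^2(1-p_0)^2$ and $\Var_0(V^*)=O(1)$.

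\textbf{Variance under the alternative and conclusion.} Under $\bar H'_1$, $V_2-V_1$ splits into a deterministic signal $\sum_i(\mu'_i-\bar\mu')^2/(N-2)$, a noise part whose variance is $O(Np'^2_0)$ by the null-variance analysis with $p'_0$ replacing $p_0$, and a signal--noise cross term $2\sum_i(\mu'_i-\bar\mu')(Z_i-\bar Z)/(N-2)$. The cross term has variance bounded by $\|\mu^*\|^2\lambda_{\max}(\Sigma_Z)/(N-2)^2\lesssim n^3(p_1-p'_0)^2p'_0/N$, which after normalization becomes $o((\E'_S[V^*])^2)$ precisely under \eqref{degre_variance} (the ratio reduces to $Np'_0/[n^3(p_1-p'_0)^2]\lesssim N^{-1/2}$). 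A Chebyshev inequality at threshold $\E'_S[V^*]/2$ then drives both error probabilities to zero. The main technical obstacle is the cancellation in $\Var_0(Y)$: the coefficients $(N-3,-4)$ are engineered so that the row sums $\sum_{f\neq s}c_{s,f}$ vanish for every $s$, which eliminates the would-be dominant $N^6p_0^3$ contribution from pairs of edge pairs sharing exactly one edge and leaves the much smaller $N^5p_0^2$ residual that, after normalization by $\sqrt N\hat p_0$, exactly matches the detection threshold \eqref{degre_variance}.
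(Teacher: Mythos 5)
Your overall architecture (Chebyshev on $V$, separation of first moments) and the signal computation $\E'_1[V]\sim n^3(p_1-p'_0)^2/N$ match the paper, but your null-variance analysis is a genuinely different and more elegant route: whereas the paper expands $V$ as in \eqref{eq:expression_2_V} and bounds the variance of each piece separately (getting the cruder $\Var_0(V)\prec Np_0^2+p_0$), you observe the exact identity $V=\tfrac{2Y}{(N-2)(N+1)}$ with $Y=(N-3)P_2-4Q$ together with the purely combinatorial cancellation $\sum_{f\neq s}c_{s,f}=2(N-2)(N-3)-4\tbinom{N-2}{2}=0$ for every potential edge $s$, which kills $A_2$ and yields the closed form $\Var_0(Y)=\tfrac{A_1}{2}p_0^2(1-p_0)^2$, hence $\Var_0(V)\asymp Np_0^2(1-p_0)^2$ exactly. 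This is a genuinely nicer treatment of the null that the paper does not contain.

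The alternative-hypothesis variance argument, however, has two genuine gaps. First, the bound $\lambda_{\max}(\Sigma_Z)\lesssim Np'_0$ is false in general: for $i\in S$ one has $(\Sigma_Z)_{ii}\approx Np'_0+np_1$, and $np_1\gg Np'_0$ is fully compatible with \eqref{degre_variance} (take $n=N^{3/4}$, $p_1=1$, $p'_0\asymp N^{-1/2}$). The correct $\lambda_{\max}(\Sigma_Z)=O(Np'_0+np_1)$ produces an extra cross-term variance of order $n^4(p_1-p'_0)^2p_1/N^2$ --- this is exactly the paper's $\tfrac{n^4}{N^2}(p_1-p'_0)^3$ term --- which is $o((\E'_1 V)^2)$ only after one additionally checks $n^2(p_1-p'_0)\to\infty$; this follows from \eqref{degre_variance}, $p_0\succ 1/N$ and $n\le N$, but the check is missing. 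Second, the claim that the residual noise part has variance $O(Np'^2_0)$ ``by the null-variance analysis with $p'_0$ replacing $p_0$'' is not rigorous: under $\P'_S$ the edge indicators are not identically distributed, and the weighted row sums $\sum_{f\neq s}c_{s,f}q_f=(p_1-p'_0)\sum_{f\neq s,\,f\subset S}c_{s,f}$ no longer vanish for $s\subset S$, so the $A_2=0$ cancellation does not account for the whole variance and extra terms in $(p_1-p'_0)$ appear. The paper resolves this by a direct moment computation giving $\Var'_1(V)\prec Np_0^2+\tfrac{n^3}{N}p_0(p_1-p'_0)^2+\tfrac{n^4}{N^2}(p_1-p'_0)^3$ and then dominating each term. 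A minor point: $V^*=V/(\sqrt N\hat p_0)$ is a ratio of random quantities, so ``$\Var_0(V^*)=O(1)$'' and ``$\E'_1[V^*]$'' should be stated in probability (show $V=O_{P_0}(\sqrt N p_0)$ and $\hat p_0\ge p_0/2$ with probability $1-o(1)$, as the paper does).
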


The test based on $V^*$ achieves the part \eqref{lower1_eq_unknown} of the detection boundary. 
We note that computing $V^*$ does not require knowledge of $p_0$, $p_1$ or $n$, and in fact, its calibration can be done without any knowledge of these parameters via a form of parametric bootstrap, as we do for the scan test below.

\subsection{The scan test}

When $p_0$ is not available a priori, we have at least three options:
\bitem \setlength{\itemsep}{0in}
\item {\em Estimate $p_0$.}
We replace $p_0$ with its maximum likelihood estimator under the null, i.e., $\hat{p}_0 = \tot/\NN$, and then compare the magnitude of the observed scan statistic \eqref{scan-stat} with what one would get under a random graph model with probability of connection equal to $\hat{p}_0$.  
\item {\em Generalized likelihood ratio test.}
We simply implement the actual generalized likelihood ratio test \citep{Kul}, which rejects for large values of 
\[
\max_{|S| = n} \left[ \nn h(\hat{p}_{1,S}) + (\NN - \nn) h(\hat{p}_{0,S}) - \NN h(\hat{p}_{0})\right]\ ,
\]
where $h(p) := p \log p + (1-p) \log (1-p)$, $\hat{p}_0$ as above, and
\[
\hat{p}_{1,S} := \frac{W_S}{\nn}, \quad \hat{p}_{0,S} := \frac{\tot - W_S}{\NN - \nn}\ ,
\] 
which are the maximum likelihood estimates of $p_1$ and $p_0$ for a given subset $S$.

\item {\em Calibration by permutation.}
We compare the observed value of the scan statistic to simulated values obtained by generating a random graph with either the same number of edges --- which leads to a calibration very similar to the first option --- or the same degree distribution --- which is the basis for in the modularity function of \cite{PhysRevE.69.026113}.  
\eitem

We focus on the first option.
\begin{prp} \label{prp:scan_estimate}
Assume that $\lim \inf  p_0N^2/n>1$. The scan test calibrated by estimation of $p_0$ is asymptotically powerful for  fixed expected total degree  if 
\beq \label{scan_estimate1}
\liminf \frac{n H(p_1)}{2 \log (N/n)} > 1\ .
\eeq
\end{prp}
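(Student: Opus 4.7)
The plan is to adapt the proof of \prpref{scan}, handling the randomness in $\hat p_0 = \tot/\NN$. Fix a small $\eta > 0$ made available by \eqref{scan_estimate1}, and for any $p \in (0,1)$ let $q(p)$ denote the unique solution in $q \in (p,1)$ of $H_p(q) = (1+\eta) \cdot 2\log(N/n)/n$; the calibrated scan test rejects when $\scan \geq \nn \, q(\hat p_0)$. First I would show that $\hat p_0$ concentrates tightly around $p_0$ under both hypotheses: by the very definition in \eqref{fixed_degree}, $\E_0[\tot] = \E'_S[\tot] = \NN p_0$ and both variances are at most $\NN p_0$, so Bernstein's inequality, together with $\liminf p_0 N^2/n > 1$, produces a slowly decaying sequence $\delta_N \to 0$ such that the event $A = \{|\hat p_0 - p_0| \leq \delta_N p_0\}$ has probability $1 - o(1)$ under both $\P_0$ and $\P'_S$, uniformly in $S$.

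For the Type I bound, the map $p \mapsto q(p)$ is increasing (since $H_p(q)$ decreases in $p$ for $q > p$), so on $A$, $q(\hat p_0) \geq q^- := q(p_0(1-\delta_N))$. A union bound over $|S| = n$ combined with the Chernoff tail for the binomial yields
\[
\P_0(\scan \geq \nn \, q(\hat p_0)) \leq \P_0(A^c) + \binom{N}{n} \exp(-\nn \, H_{p_0}(q^-)).
\]
A first-order expansion using $\partial_p H_p(q) = (p-q)/(p(1-p))$ gives $H_{p_0}(q^-) = (1+\eta) \cdot 2\log(N/n)/n - O(q^- \delta_N)$, and a careful check shows this correction is of strictly smaller order, so that $\nn H_{p_0}(q^-) \geq (1+\eta/2) \, n\log(N/n) \, (1+o(1))$. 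Since $\log\binom{N}{n} \leq n\log(N/n)(1+o(1))$, the right-hand side vanishes.

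For the Type II bound, for the planted $S$ we have $W_S \sim \Bin(\nn, p_1)$. On $A$, $q(\hat p_0) \leq q^+ := q(p_0(1+\delta_N))$, and by the same perturbation analysis $q^+ \to q_0$, where $H_{p_0}(q_0) = (1+\eta) \cdot 2\log(N/n)/n$; condition \eqref{scan_estimate1} then forces $p_1 > q_0$ for $\eta$ small enough. A standard lower tail bound on $\Bin(\nn, p_1)$ therefore gives $W_S > \nn q^+$ with probability tending to 1 under $\P'_S$, completing the proof.

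The main obstacle is verifying the quantitative continuity of $H_p(q)$ under multiplicative perturbations of $p$, uniformly across the moderate ($np_0 \gg \log(N/n)$) and large ($np_0 \ll \log(N/n)$) deviation regimes. In the moderate regime, a standard Taylor expansion of $H_p(q)$ around $q = p$ makes the correction $O(q^- \delta_N)$ obviously negligible. In the large deviation regime, where $H_{p_0}(q_0) \approx q_0 \log(q_0/p_0)$ with $\log(q_0/p_0) \to \infty$ under \eqref{n-p0_unknown}, a $(1 \pm \delta_N)$ perturbation of $p_0$ only shifts $H_{p_0}(q_0)$ by the factor $1 - O(\delta_N/\log(q_0/p_0)) = 1 - o(1)$, which is precisely what is needed.
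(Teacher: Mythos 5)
Your overall strategy — concentration of $\hat p_0$ combined with quantitative stability of the entropy function under relative perturbations of the null probability — is the same as the paper's, though the packaging differs. The paper reduces to \prpref{scan} applied with $\hat p_0$ in place of $p_0$ and then checks $\liminf n H_{\hat p_0}(p_1)/(2\log(N/n)) > 1$ via a three-way case analysis on $r = \lim p_1/p_0 \in \{1\} \cup (1,\infty) \cup \{\infty\}$; you instead define an explicit threshold $q(\hat p_0)$ by inverting $H$ and analyze the perturbation $q(p_0(1\pm\delta_N))$ directly. These are equivalent routes. Two remarks on what you actually prove versus what the paper proves: your concentration step cleanly exploits that under the fixed expected total degree model $\E_0[\tot] = \E'_S[\tot] = \NN p_0$ exactly, so $\hat p_0$ is unbiased under both hypotheses. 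The paper's proof computes $\E[\NN\hat p_0] = \NN p_0 + \nn(p_1-p_0)$, i.e., it allows for the upward bias $a = \nn(p_1-p_0)/\NN$ that arises in the \emph{original} model \eqref{model}, and this is precisely where the paper's Case 3 needs the assumption $\liminf p_0 N^2/n > 1$ (to force $\liminf np_1 > 1$ and control $p_1\log(p_1/(p_0\vee a))$). In your version the bias is zero, so you never need more than $N^2 p_0 \to \infty$ — a genuine simplification that the fixed-degree structure buys you, but it means you should be honest that the $\liminf p_0 N^2/n > 1$ hypothesis is not really being exercised.

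Two gaps that need filling before the argument is complete. First, the Type II bound: "a standard lower tail bound on $\Bin(\nn,p_1)$ therefore gives $W_S > \nn q^+$" requires verifying the margin $p_1 - q^+ \gg \sqrt{p_1/\nn}$, which is not automatic. In the moderate regime this follows from $p_1 - q^+ \succ p_1 - p_0 \asymp \sqrt{p_0\log(N/n)/n}$ versus fluctuation $\sqrt{p_1}/n$, and in the large-deviation regime from $p_1 - q^+ \succ p_1$ and the standing assumption $n^2 p_1 \to \infty$; but this needs to be written out, as the paper's proof of \prpref{scan} devotes a paragraph to exactly this point. Second, your perturbation analysis treats only the two extremes $np_0 \gg \log(N/n)$ and $np_0 \ll \log(N/n)$. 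The intermediate regime $p_1/p_0 \to r \in (1,\infty)$ (equivalently $np_0 \asymp \log(N/n)$), which the paper handles as its separate Case 2, is not covered by a Taylor expansion around $q = p$ nor by the $\log(q_0/p_0)\to\infty$ argument. It does work — there $q_0/p_0 \to r' \in (1,r)$, the shift in $H$ is $O(p_0\delta_N)$ while $H_{p_0}(q_0) \asymp p_0$, so the relative error is $O(\delta_N) = o(1)$ — but you should say so.
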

Hence, the scan test calibrated by estimation of $p_0$ achieves the entropy condition \eqref{lower2} without requiring the knowledge of $p_0$ or $p_1$.  We mention that adaptation to unknown $n$ may be achieved as described in \secref{n-adapt}.

\subsection{Combined test and full adaptation to unknown $p_0$}

A combination of the degree variance test and of the scan test calibrated by estimation of $p_0$ is seen to achieve the detection boundary established in \thmref{lower_unknownp_0}, without requiring knowledge of $p_0$ or $p_1$, or even $n$.

\section{Testing in polynomial-time}
\label{sec:poly}

While computing the total degree \eqref{total-stat} or the degree variance statistic \eqref{Vstar} can be done in linear time in the size of the network, i.e., in $O(N^2)$ time, computing the scan statistic \eqref{scan-stat} is combinatorial in nature and there is no known polynomial-time algorithm to compute it.
To see this, note that the ability to compute \eqref{scan-stat} in polynomial-time implies the ability to compute the size of the largest clique in the graph, since this is equal to
\[
\max\{n: \scan = \nn\}\ ,
\]
and computing the size of the largest clique in a general graph in known to be NP-hard \citep{MR0378476}, and even hard to approximate \citep{MR2277193}.

A question of particular importance in modern times is determining the tradeoff between statistical performance and computational complexity.
At the most basic level, this boils down to answering the following question: 
{\em What can be done in polynomial-time?}

\subsection{Convex relaxation scan test} \label{sec:relax}
We now suggest a convex relaxation to the problem of computing the scan statistic.
To do so, we follow the footsteps of \cite{berthet}, who consider the problem of detecting a sparse principal component based on a sample from a multivariate Gaussian distribution in dimension $N$.
Assuming the sparse component has at most $n$ nonzero entries, they show that a near-optimal procedure is based on the largest eigenvalue of any $n$-by-$n$ submatrix of the sample covariance matrix.
Computing this statistic is NP-hard, so they resort to the convex relaxation of \cite{aspremont}, which they also study.
We apply their procedure to $\bW^2$.

Formally, for a positive semidefinite matrix $\bB \in \bbR^{N \times N}$ and $1\le n \le N$, define 
\[
\lambda_n^{\rm max}(\bB) = \max_{|S| = n} \lambda^{\rm max}(\bB_S)\ ,
\]
where $\bB_S$ denotes the principal submatrix of $\bB$ indexed by $S \subset \{1, \dots, N\}$ and $\lambda^{\rm max}(\bB)$ the largest eigenvalue of $\bB$.
\cite{aspremont} relaxed this to
\[{\rm SDP}_n(\bB) = \max_{\bZ} \, \trace(\bB \bZ), \quad \text{ subject to } \bZ \succeq 0, \, \trace(\bZ) = 1, \, |\bZ|_1 \le n~,
\] 
where the maximum is over positive semidefinite matrices $\bZ = (Z_{st}) \in \bbR^{N \times N}$ and $|\bZ|_1 = \sum_{s,t} |Z_{st}|$.
We consider the relaxed scan test, which rejects for large values of
\beq \label{relaxed-stat}
{\rm SDP}_n(\bW^2)\ .
\eeq

When $p_0$ is known, we simply calibrate the procedure by Monte Carlo simulations, effectively generating $\bW_1, \dots, \bW_B$ i.i.d.~from $\bbG(N,p_0)$ and computing ${\rm SDP}_n(\bW_b^2)$ for each $b = 1, \dots, B$, and estimating the p-value by the fraction of $b$'s such that ${\rm SDP}_n(\bW_b^2) \ge {\rm SDP}_n(\bW^2)$.  Typically $B$ is a large number, and below we consider the asymptote where $B = \infty$.

When $p_0$ is unknown, we estimate $p_0$ as we did for the scan test in \prpref{scan_estimate}, and then calibrate the statistic by Monte Carlo, effectively using a form of parametric bootstrap.

In either case, we have the following.

\begin{prp} \label{prp:relaxed}
Assume that \eqref{n-p0} holds
and $n\leq  N^{1/2-t}$ for some $t>0$. 
Then, the relaxed scan test is powerful if 
\beq \label{relaxed}
\lim\inf \frac{n}{\sqrt{N\log(N)}}\frac{(p_1-p_0)^2}{p_0}> 2 \ .
\eeq
\end{prp}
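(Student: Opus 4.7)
The plan is to follow the Berthet--Rigollet template for sparse principal component detection \citep{berthet}, adapted to the binary matrix setting here: the relaxed scan test compares ${\rm SDP}_n(\bW^2)$ to its Monte Carlo null distribution, so it suffices to upper bound the statistic under $H_0$ and lower bound it at the planted community $S$ under $\bar{H}'_1$, with a gap that dominates both the calibration error and the random fluctuation.

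For the null upper bound, I would decompose $\bW = p_0(J - \bI) + \bar\bW$ with $\bar\bW$ the centered adjacency, giving
\[
\bW^2 \;=\; \mathbb{E}_0[\bW^2] + \bN_1 + \bN_2,
\]
where $\mathbb{E}_0[\bW^2] = (N-1)p_0\,\bI + (N-2)p_0^2(J - \bI)$, $\bN_1 = \bar\bW^2 - (N-1)p_0(1-p_0)\bI$ is the centered quadratic noise, and $\bN_2 = p_0[\bar\bW(J - \bI) + (J - \bI)\bar\bW]$ is the linear cross-term. As a supremum of linear functionals, ${\rm SDP}_n(\cdot)$ is sublinear, so ${\rm SDP}_n(\bW^2) \le {\rm SDP}_n(\mathbb{E}_0[\bW^2]) + {\rm SDP}_n(\bN_1) + {\rm SDP}_n(\bN_2)$. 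A direct calculation yields ${\rm SDP}_n(\mathbb{E}_0[\bW^2]) = (N-1)p_0 + (n-1)(N-2)p_0^2$, attained at any $Z = \mathbf{1}_T \mathbf{1}_T^\top/n$ with $|T| = n$. For the two noise pieces I would use the sparsity-adapted bound $|\trace(\bB Z)| \le \|\diag(\bB)\|_\infty + n\max_{i\ne j}|B_{ij}|$ (which exploits $\trace(Z) = 1$ on the diagonal and $|Z|_1 \le n$ off the diagonal), together with Bernstein concentration on each entry: the off-diagonal entries of $\bar\bW^2$ are $(N-2)$-term sums of mean-zero bounded products with variance of order $Np_0^2$, giving a union-bound maximum of order $p_0\sqrt{N\log N}$, while the diagonal contributes only $O(\sqrt{Np_0\log N})$. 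For $\bN_2$, writing $\bar\bW J = (\bar\bW \mathbf{1})\mathbf{1}^\top$ and $|\mathbf{1}^\top Z \bar\bW \mathbf{1}| \le n\|\bar\bW \mathbf{1}\|_\infty$ with $\|\bar\bW \mathbf{1}\|_\infty = O(\sqrt{Np_0\log N})$ gives ${\rm SDP}_n(\bN_2) = O(np_0^{3/2}\sqrt{N\log N})$. Putting these together, ${\rm SDP}_n(\bW^2) \le {\rm SDP}_n(\mathbb{E}_0[\bW^2]) + O(np_0\sqrt{N\log N})$ with high probability under $H_0$; the sparsity hypothesis $n \le N^{1/2-t}$ ensures this sparse bound beats the rival operator-norm estimate $O(Np_0)$.

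For the lower bound under $\bar{H}'_1$, I would evaluate the SDP at $Z_S = \mathbf{1}_S \mathbf{1}_S^\top / n$, which is feasible since $\trace(Z_S) = 1$ and $|Z_S|_1 = n$. This yields
\[
{\rm SDP}_n(\bW^2) \;\ge\; \trace(\bW^2 Z_S) \;=\; \frac{1}{n}\|\bW \mathbf{1}_S\|^2 \;=\; \frac{1}{n}\sum_{k=1}^N Y_k^2, \qquad Y_k := \sum_{i \in S} W_{ki}.
\]
Computing expectations shows $\mathbb{E}'_S[\trace(\bW^2 Z_S)] = {\rm SDP}_n(\mathbb{E}_0[\bW^2]) + (n-1)\Delta\bigl[1 + (n-2)(p_1 + p_0)\bigr]$ where $\Delta := p_1 - p_0$, and the crucial observation is that $p_1 = p_0 + \Delta \ge \Delta$, so the signal is bounded below by $(n-1)(n-2)\Delta^2 \asymp n^2\Delta^2$. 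Concentration of $\sum_k Y_k^2$ around its mean then follows from Bernstein applied to the squared binomials (independent across $k \notin S$), with fluctuations of order $O(\sqrt{Nn^3 p_0^3})$, negligible compared with the signal under the assumed SNR condition.

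Combining the two sides, the hypothesis $\liminf\, n\Delta^2/(p_0\sqrt{N\log N}) > 2$ forces $n^2\Delta^2$ to exceed twice the null noise bound $O(np_0\sqrt{N\log N})$, and the test therefore rejects with probability tending to one. The main technical difficulty lies in the SDP noise decomposition above --- in particular controlling the cross-term $\bN_2$ without losing a factor in $p_0$ and separating diagonal from off-diagonal contributions so that the sparsity constraint is fully exploited. A secondary challenge is the Monte Carlo calibration when $p_0$ is replaced by $\hat p_0 = \tot/\NN$: I would show that $\hat p_0 = p_0(1 + o_P(1))$ under both hypotheses (using $N^2 p_0 \to \infty$ under $H_0$ and $\mathbb{E}'_S[\hat p_0] - p_0 = O((n/N)^2 p_0)$ under $\bar{H}'_1$ together with $n/N \to 0$), so that the quantile derived from $\bbG(N, \hat p_0)$ approximates the true null quantile up to a term much smaller than the signal.
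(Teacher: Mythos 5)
Your proposal follows the same overall strategy as the paper: lower-bound ${\rm SDP}_n(\bW^2)$ under the alternative by evaluating the linear functional at the rank-one feasible point $Z_S=\mathbf{1}_S\mathbf{1}_S^\top/n$ (which is exactly what the paper does via $\lambda_n^{\rm max}(\bB_S)\ge \frac{1}{n}\mathbf{1}_S^\top\bW^2\mathbf{1}_S$), and upper-bound it under the null by exploiting the constraints $\trace(\bZ)=1$, $|\bZ|_1\le n$ together with entrywise Bernstein control of $\bW^2$. The one point of genuine technical divergence is in how the null upper bound is extracted from the SDP: the paper cites the dual representation ${\rm SDP}_n(\bB)=\min_\bU\lambda^{\rm max}(\bB+\bU)+n|\bU|_\infty$ of \cite{bach2010convex} and then performs hard thresholding at level $z=\max_{i\neq j}B_{ij}$, which reduces $\lambda^{\rm max}(\tau_z(\bB))$ to $\max_i B_{ii}$; you instead observe directly from the constraints that any feasible $\bZ$ satisfies $Z_{ii}\ge 0$, $\sum_i Z_{ii}=1$, $\sum_{i\ne j}|Z_{ij}|\le n-1$, whence $\trace(\bB\bZ)\le \max_i B_{ii}+(n-1)\max_{i\ne j}|B_{ij}|$. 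The two routes give the identical bound $(N-1)p_0+n(N-2)p_0^2 + O(\sqrt{Np_0\log N})+O(np_0\sqrt{N\log N})$; your version is the more elementary of the two, avoiding the external duality lemma. Your decomposition of $\bW^2$ into $\E_0[\bW^2]+\bN_1+\bN_2$ before applying the entrywise bound is harmless but unnecessary --- applying the bound directly to $\bW^2$ and then using $B_{ii}\sim\Bin(N-1,p_0)$, $B_{ij}\sim\Bin(N-2,p_0^2)$ under $H_0$, as the paper does, is shorter and gets the same constants. The remaining details (signal $\asymp n^2\Delta^2$ with $p_1\sim\Delta$ under \eqref{relaxed}, Chebyshev/Bernstein for the alternative fluctuation, the check that the calibration error from $\hat p_0$ is lower order) match the paper's computation.
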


To gain some insights on the relative performance of the scan test and the relaxed scan test, let us assume that $n^2\ll N$, and $n p_0 \gg \log(N/n)$.  Applying \prpref{scan} (or \prpref{scan_estimate}) in this setting, we find that the scan test is asymptotically powerful when
\[\frac{(p_1-p_0)^2}{p_0}\succ \frac{\log(N/n)}{n}\ .\]
Thus, comparing with \eqref{relaxed}, we lose a factor $\sqrt{N/\log(N)}$ when using the relaxed version. In the denser regime where $n^2\gg N \log(N)$, the total degree test and degree variance test both have stronger theoretical guarantees established in \prpref{total} and \prpref{degre_variance} respectively. 
Below we explain why the $\sqrt{N/\log(N)}$ loss is not unexpected.

\subsubsection*{Optimality}

The problem $H_0: \cG \sim \bbG(N,1/2) \text{ versus } H_1: \cG \sim \bbG(N,1/2; n,1)$ is called the Planted (or Hidden) Clique Problem \citep{MR2735341} and has become one of the most emblematic statistical problems where computational constraints seem to substantially affect the difficulty of the problem.  
Recent advances in compressed sensing and matrix completion have shown that computationally tractable algorithms can achieve the absolute information bounds (up to constants) in most cases.
In contrast, in the Planted (or Hidden) Clique Problem there is no known polynomial-time algorithm that can detect a clique of size $n = o(\sqrt{N})$ \citep{dekel}, while the clique test can detect a clique of size $n \asymp \log N$, as shown in Proposition \ref{prp:clique}.
In fact, the problem is provably hard in some computational models, such as monotone circuits~\citep{rossmanphd,feldman12}.  We refer to \cite{berthet} for a thorough discussion. 

More generally, we may want to characterize the sequences $(n,N,p_0,p_1)$ for which there are asymptotically powerful tests running in polynomial time.  In our findings, the only situation where we found this to be true was in the dense regime, where the total degree test is both powerful in the large-sample limit and computable in polynomial time.  (Replace this with the degree variance test when $p_0$ is unknown.)

\subsection{Other polynomial-time tests} \label{sec:dense}

\subsubsection{The maximum degree test}
Perhaps the first computationally-feasible test that comes to mind in the sparse regime is the test based on the maximum degree
\beq \label{max}
\max_{i=1,\dots,N} W_{i\cdot}\ , 
\eeq
where $W_{i\cdot}$ is the degree of node $i$ in the graph, defined in \eqref{degree}.

\begin{prp}\label{prp:max}
The maximal degree test is asymptotically powerful if $p_0\gg \log(N)/N$ and 
\[
\lim\inf\frac{n^2}{N\log(N)}\frac{(p_1-p_0)^2}{p_0(1-p_0)}>2\ .
\]
Under condition \eqref{n-p0}, the maximal degree test is asymptotically powerless if $\lim\sup \log(n)/\log(N) <1$ and 
\beq\label{eq:condition_lower_degree}
 \frac{n^2}{N\log(N)}\frac{(p_1-p_0)^2}{p_0(1-p_0)}\rightarrow 0
\eeq
\end{prp}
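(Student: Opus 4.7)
The plan is to handle the powerful and powerless directions separately, using moderate deviations for $\text{Bin}(N-1,p_0)$ in both.

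\textbf{Powerful direction.} Pick a small $\eta > 0$ compatible with the hypothesis $\liminf n^2(p_1-p_0)^2/(Np_0(1-p_0)\log N) > 2$ and set the rejection threshold $t_N := (N-1)p_0 + \sqrt{(2+\eta)\, Np_0(1-p_0)\log N}$. Under $H_0$, each $W_{i\cdot}$ is marginally $\text{Bin}(N-1,p_0)$; since $p_0 \gg \log N/N$, the linear term in Bernstein's inequality is negligible and one obtains $\P_0(W_{i\cdot} > t_N) = N^{-1-\Omega(1)}$, so a union bound over $N$ nodes gives $\P_0(\max_i W_{i\cdot} > t_N) \to 0$. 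Under $H_S$, pick any $i \in S$: $W_{i\cdot} \sim \text{Bin}(n-1,p_1)\ast \text{Bin}(N-n,p_0)$ has mean $(N-1)p_0 + (n-1)(p_1-p_0)$ and variance $(1+o(1))Np_0(1-p_0)$. The assumption forces the mean to exceed $t_N$ by an amount $\gg \sqrt{Np_0(1-p_0)}$, so Chebyshev yields $\P_S(\max_i W_{i\cdot}\le t_N)\le \P_S(W_{i\cdot}\le t_N)\to 0$.

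\textbf{Powerless direction.} The key coupling observation is that for every $i \notin S$, the degree $W_{i\cdot}$ only depends on edges $\{i,j\}$ with $j \neq i$, none of which is contained in $S$. Hence the joint distribution of $(W_{i\cdot})_{i \notin S}$ is identical under $H_0$ and $H_S$; in particular $M_{\notin S} := \max_{i \notin S} W_{i\cdot}$ has the same law under both. My plan is to show that, under the stated conditions, $\max_i W_{i\cdot} = M_{\notin S}$ with high probability under both hypotheses, so the two full maxima coincide in distribution up to $o(1)$ total variation and no threshold test can separate them.

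To execute this, I would derive matching high-probability bounds
\[
M_{\notin S} \geq (N-1)p_0 + \sqrt{(2-\epsilon)\, Np_0(1-p_0)\log N}, \qquad \max_{i\in S} W_{i\cdot} \leq (N-1)p_0 + \sqrt{2c\, Np_0(1-p_0)\log N}(1+o(1)),
\]
where $c := \limsup \log n/\log N < 1$ and $\epsilon > 0$ is arbitrary. The upper bound on $\max_{i\in S}W_{i\cdot}$ is a Bernstein plus union bound over only $n$ nodes, where \eqref{eq:condition_lower_degree} absorbs the mean shift $(n-1)(p_1-p_0) = o(\sqrt{Np_0(1-p_0)\log N})$ (and under $H_0$ no shift is present, so the same bound is even looser). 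The lower bound on $M_{\notin S}$ comes from a Paley--Zygmund second-moment argument applied to $\sum_{i \notin S}\IND{W_{i\cdot} > x}$: because any two $W_{i\cdot}, W_{i'\cdot}$ share only the single edge $W_{ii'}$, conditioning on this edge perturbs each Gaussian tail by a factor $1+o(1)$, so $\E[X^2] = (1+o(1))(\E X)^2$ and $\P(X \geq 1) \to 1$ whenever $\E X \to \infty$. Picking $x$ just below $(N-1)p_0 + \sqrt{2 Np_0(1-p_0)\log N}$ makes $\E X = (N-n)\cdot \P(\text{Bin}(N-1,p_0) > x) \to \infty$. Since $2c < 2$, the first estimate strictly dominates the second, so $\max_i W_{i\cdot} = M_{\notin S}$ w.h.p.\ under both hypotheses, and any threshold test has risk $1-o(1)$.

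\textbf{Main obstacle.} The delicate step is the Paley--Zygmund lower bound on $M_{\notin S}$: it requires the moderate-deviations asymptotic $\P(\text{Bin}(N-1,p_0) > x) = \exp\bigl(-(1+o(1))y^2/2\bigr)$ for $y = (x-(N-1)p_0)/\sqrt{Np_0(1-p_0)}$ of order $\sqrt{\log N}$, which in turn needs $Np_0 \gg \log N$. This is guaranteed by \eqref{n-p0} combined with $\limsup\log n/\log N < 1$, which together force $Np_0 \geq (N/n)^{1-o(1)}$; since $N/n \geq N^{\epsilon}$ eventually, $Np_0$ grows polynomially in $N$. The variance computation also needs $1 \ll \sqrt{Np_0(1-p_0)}$ to kill the covariance contribution from the shared edge, which again follows from the same regime.
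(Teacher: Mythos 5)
Your powerful-direction argument is the same as the paper's (Bernstein plus union bound under the null, Chebyshev on a single community node under the alternative). One nit: $\Var_S(W_{i\cdot}) = (n-1)p_1(1-p_1)+(N-n)p_0(1-p_0)$ is not always $(1+o(1))Np_0(1-p_0)$ in this regime, since the $\Bin(n-1,p_1)$ piece can dominate; the Chebyshev step still goes through, but because the mean excess over $t_N$ then scales like $n(p_1-p_0)$ and its square dominates $np_1+Np_0$, not because the variance is stable.

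For the powerless direction you reach the same reduction as the paper — compare $\max_{i\in S}W_{i\cdot}$ with $M_{\notin S}:=\max_{i\notin S}W_{i\cdot}$, whose law is identical under $\P_0$ and $\P_S$, and bound $\max_{i\in S}W_{i\cdot}$ from above by Bernstein over $n$ nodes with \eqref{eq:condition_lower_degree} absorbing the mean shift — but your route to the lower bound on $M_{\notin S}$ is genuinely different. The paper fixes $T\subset S^c$ of size $N^{1-\kappa}$ and scans the partial degrees $\sum_{j\in T^c}W_{ij}$, which are independent across $i\in T$, then adds back the small within-$T$ contribution; this needs an explicit Stirling-type lower bound on the binomial tail. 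You instead run Paley--Zygmund on the exceedance count $X=\sum_{i\notin S}\IND{W_{i\cdot}>x}$, exploiting that $W_{i\cdot}$ and $W_{i'\cdot}$ share only the single edge $W_{ii'}$: conditioning on $W_{ii'}$ decouples them, and a Cauchy--Schwarz decomposition gives $\P(W_{i\cdot}>x,W_{i'\cdot}>x)/\P(W_{i\cdot}>x)^2 = 1+O(p_0\delta_N^2)$ with $\delta_N:=\P(\Bin(N-2,p_0)>x-1)/\P(\Bin(N-2,p_0)>x)-1 = o(1)$, so $\E X^2=(1+o(1))(\E X)^2$ once $\E X\to\infty$. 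Your regime check is the right one: \eqref{n-p0} together with $\limsup\log n/\log N<1$ forces $Np_0$ to grow polynomially in $N$, which makes both $\delta_N\to 0$ and the moderate-deviation asymptotics for $\P(\Bin(N-1,p_0)>x)$ valid. Both routes are correct; yours replaces the paper's independence-by-restriction trick with a short covariance estimate, which is arguably tidier, though to finish the first-moment computation $\E X\to\infty$ you still need the matching lower tail bound $\P(\Bin(N-1,p_0)>x)\geq\exp\{-(1+o(1))y^2/2\}$ that the paper derives explicitly via Stirling.
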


Comparing with  Propositions \ref{prp:total} and \ref{prp:relaxed}, we observe that the maximum degree test is either less powerful than the relaxed scan test (when $n\leq N^{1/2-t}$ for any $t>0$) or less powerful that the total degree test (when $n\gg \sqrt{N/\log(N)}$). For unknown $p_0$, the maximum degree test is also less powerful than the degree variance test.

\medskip

\subsubsection{Densest subgraph test}
Another possible avenue for designing computationally tractable tests for the problem at hand lies in algorithms for finding dense subgraphs of a given size. 
We follow \citep{khuller2009finding}, where the reader will find appropriate references and additional results.
Define the density of a subgraph $S \subset \cV$ as 
\[h(S) = \frac{|E_S|}{|S|}, \quad \text{ where } E_S = \{(i,j) \in S^2: W_{i,j} = 1\}\ .\]
Finding $S \subset \cV$ that maximizes $h(S)$ may be done in polynomial-time.

\begin{prp}\label{prp:densest_subgraph}
Assume that $p_0\gg \log(N)/N$. \begin{enumerate}
\item Under the null hypothesis,  $\max_S h(S)\sim_{\mathbb{P}_0} h(\cV)\sim N p_0/2$ and this maximum is achieved at subsets $S$ satisfying $|S|\sim N$.
\item The densest subgraph test is powerful if $\lim\inf \frac{np_1}{Np_0}>1$.
\item Assume that  $\frac{np_1}{Np_0}\rightarrow 0$. Under the alternative hypothesis,  $\max_S h(S)\sim_{\mathbb{P}_S} h(\cV)\sim_{\mathbb{P}_S} N p_0/2$ and this maximum is achieved at subsets $S$ satisfying $|S|\sim N$.
\end{enumerate}
\end{prp}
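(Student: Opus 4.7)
The plan is to prove concentration of $h(T) = |E_T|/|T|$ around $\mathbb{E}[h(T)]$ uniformly over all subsets $T \subset \cV$, and then identify under each hypothesis where this expectation is largest. First I compute the expected density. Under $\mathbb{P}_0$, $\mathbb{E}_0 h(T) = (|T|-1) p_0/2$, which increases in $|T|$ and is maximized at $|T| = N$. Under $\mathbb{P}_S$ with $|T \cap S| = k$ and $|T| = t$, the edges of $T$ split into $\binom{k}{2}$ independent Bernoullis of parameter $p_1$ and $\binom{t}{2} - \binom{k}{2}$ Bernoullis of parameter $p_0$, so $\mathbb{E}_S h(T) = (t-1) p_0/2 + k(k-1)(p_1-p_0)/(2t)$. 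For fixed $k$ this is convex in $t$, so its maximum over $t \in [k,N]$ is attained at an endpoint; combined with the monotonicity in $k$, one finds $\max_T \mathbb{E}_S h(T) \sim \max(n p_1/2, N p_0/2)$, the first value achieved near $T = S$ and the second near $T = \cV$.

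Next, I would establish uniform concentration of $h(T)$. For $t \leq N p_0$ the trivial bound $h(T) \leq (t-1)/2 \leq N p_0/2$ disposes of all small subsets. For $N p_0 < t \leq N$, I would apply a Chernoff bound to the binomial $|E_T|$ in the entropy form $\mathbb{P}(h(T) \geq (1+\delta) N p_0/2) \leq \exp(-\binom{t}{2} H_{p_0}(N p_0(1+\delta)/(t-1)))$, multiply by $\binom{N}{t} \leq (eN/t)^t$, and sum over $t$. The hypothesis $p_0 \gg \log(N)/N$ precisely ensures that the entropy exponent dominates the combinatorial factor $t \log(eN/t)$ across the whole range, yielding $\max_T h(T) \leq (1+o(1)) N p_0/2$ with high $\mathbb{P}_0$-probability, and the maximum attained only at $|T| \sim N$. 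Under $\mathbb{P}_S$ the same argument carries over almost verbatim, since the mean shifts by at most $n^2 p_1/(2t) = o(N p_0)$ when $n p_1 = o(N p_0)$, and Chernoff bounds for sums of independent Bernoullis with possibly different parameters still apply.

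With these two steps in hand, Part (1) follows from combining uniform concentration with the law of large numbers $h(\cV) \to N p_0/2$ under $\mathbb{P}_0$. Part (2) uses $T = S$ as witness: $h(S) \to n p_1/2$ in $\mathbb{P}_S$-probability (valid since $n^2 p_1 \to \infty$ by the standing assumption), and the hypothesis $\liminf n p_1/(N p_0) > 1$ forces this to exceed the null concentration level $N p_0/2$ by a constant factor, so a test comparing $\max_T h(T)$ to a threshold between $N p_0/2$ and $n p_1/2$ (calibrated by a plug-in estimate of $p_0$, or by normalizing by $h(\cV)$) is asymptotically powerful. Part (3) combines the uniform upper bound (alternative version) with $\mathbb{E}_S h(\cV) = (N-1) p_0/2 + O(n^2 p_1/N) \sim N p_0/2$ to conclude $\max_T h(T) \sim h(\cV) \sim N p_0/2$ in $\mathbb{P}_S$-probability, the maximum attained at $|T| \sim N$.

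The main obstacle is the uniform concentration in the intermediate regime $t \asymp N$, where the combinatorial prefactor $\binom{N}{t}$ shrinks but $\log(N/t)$ shrinks too, so that the Chernoff exponent must still dominate after the union bound. Here the hypothesis $p_0 \gg \log(N)/N$ is essentially tight: without it, $h(T)$ on subsets of size close to $N$ can fluctuate enough to overshoot $(1+o(1)) N p_0/2$, which would break both the identification $\max_T h(T) \sim h(\cV)$ of Parts (1) and (3) and the separation argument of Part (2). A secondary nuisance in Part (3) is bookkeeping the pairs $(t,k)$ to verify that the planted community's bias to intermediate-size subsets is negligible, but this is routine given the quantitative assumption $n p_1 = o(N p_0)$.
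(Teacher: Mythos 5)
Your plan is essentially the same as the paper's: both treat small subsets ($|T|\prec Np_0$) with the trivial bound $h(T)\le |T|/2$, then apply a Chernoff-type tail bound (you suggest the entropy form; the paper uses Bernstein's inequality) together with a union bound over all subsets of size $t>Np_0/2$, observing that the hypothesis $p_0\gg \log(N)/N$ makes the exponent dominate the combinatorial factor $t\log(eN/t)$, and finally using Chebyshev for $h(\cV)$ and the witness $T=S$ for Part~(2).

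One small imprecision in Part~(3): you bound the mean shift by $n^2p_1/(2t)$ and claim it is $o(Np_0)$ whenever $np_1=o(Np_0)$, but that only follows when $t\succ n$. For $Np_0<t<n$ (a nonempty range if $Np_0\ll n$) the loose bound $n^2p_1/(2t)$ need not be $o(Np_0)$. The fix is immediate and is what the paper does: since $|S\cap T|\le\min(n,|T|)$, one has $|S\cap T|^2/|T|\le\min(n,|T|)\le n$, so the shift is at most $np_1/2=o(Np_0/2)$ uniformly over all $T$ in the relevant range. With that adjustment the argument is complete and matches the paper.
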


The condition $\lim\inf \frac{np_1}{Np_0}>1$ is stronger than what we have obtained for the relaxed scan test \eqref{relaxed} in the sparser case ($n\leq N^{1/2-t}$ for any $t>0$) and than what we have obtained for the total degree test \eqref{total} and the degree variance test \eqref{degre_variance} in the less sparse case $(n\gg \sqrt{N})$. If $np_1/Np_0\rightarrow 0$, then the densest subgraph statistic seems to behave like the total degree statistic and we therefore expect similar performances although we have no proof of this statement.

In order to improve the power, we would like to restrict our attention to subgraphs of size $n$ (assumed known for now) and use 
$\max_{|S| = n} h(S).$
Computing this, however, is NP-hard, and there is no known polynomial-time approximation within a constant factor.  
Nevertheless, the following variant statistic $\max_{|S| \ge n} h(S)$ can be approximated within a constant factor in polynomial-time. However, the power of the resulting test is not improved. 
Since the statistic $\max_{|S| \ge n} h(S)$ may only be approximated within a constant factor,  the resulting test is powerful only if $np_1\geq C N p_0$ where $C$ is positive constant that depends on this approximation factor.

\section{Discussion}
\label{sec:discussion}

With this paper, we have established the fundamental statistical (information theoretic) difficulty of detecting a community in a network, modeled as the detection of an unusually dense subgraph within an Erd\"os-R\'enyi random graph, in the quasi-normal regime where $n p_0$ is not too small as made explicit in \eqref{n-p0}. 
The quasi-Poisson regime, where $n p_0$ is smaller, requires different arguments and the application of somewhat different tests, and this will be detailed in a separate paper under preparation.

For the time being, in the quasi-normal regime, we learned the following.
In the moderately sparse setting --- $n\gg N^{2/3})$ for known $p_0$ and $n\gg N^{3/4}$ for unknown $p_0$ --- this detection boundary is achieved by polynomial-time tests.
In the sparser setting, there is a large discrepancy between the information theoretic boundaries and performances of known polynomial tests, which in view of the Planted Clique Problem, is not surprising.  

It is of great interest to study this optimal detection boundary, this time under computational constraints, a theme of contemporary importance in statistics, machine learning and computer science.  This promisingly rich line of research is well beyond the scope of the present paper.

\section{Proofs}
\label{sec:proofs}

\subsection{Auxiliary results}

The following is Chernoff's bound for the binomial distribution.
Remember the definition of $H$ in \eqref{H}.  

\begin{lem}[Chernoff's bound] \label{lem:chernoff}
For any positive integer $n$, any $q, p_0 \in (0,1)$, we have
\beq \label{chernoff}
\pr{\Bin(n, p_0) \ge q n} \leq \exp\left(- n H(q) \right).
\eeq
\end{lem}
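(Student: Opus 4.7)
The plan is the classical Chernoff--Cram\'er exponential-moment argument. Write $X := \Bin(n, p_0) = \sum_{i=1}^n Y_i$ with the $Y_i$ i.i.d.\ Bernoulli$(p_0)$. For any $\lambda \ge 0$, Markov's inequality applied to $e^{\lambda X}$, combined with independence, yields
\[
\P(X \ge q n) \le e^{-\lambda q n}\, \E[e^{\lambda X}] = \exp\!\Bigl[n\bigl(-\lambda q + \log(1 - p_0 + p_0 e^{\lambda})\bigr)\Bigr],
\]
using the Bernoulli MGF $\E[e^{\lambda Y_1}] = 1 - p_0 + p_0 e^{\lambda}$.

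The next step is to optimize the exponent over $\lambda \ge 0$. Differentiating $\varphi(\lambda) := -\lambda q + \log(1 - p_0 + p_0 e^{\lambda})$ and setting $\varphi'(\lambda) = 0$ gives the minimizer
\[
\lambda^* = \log\!\frac{q(1-p_0)}{p_0(1-q)},
\]
which lies in $[0, \infty)$ precisely when $q \ge p_0$, the only regime where the stated bound is informative (for $q < p_0$ the right-hand side exceeds $1$ only after interpreting it as $\min\{1, e^{-n H_{p_0}(q)}\}$, which is the standard reading). A short algebraic simplification of $\varphi(\lambda^*)$, grouping the $\log q$ and $\log(1-q)$ contributions against $\log p_0$ and $\log(1-p_0)$, produces
\[
\varphi(\lambda^*) = -\Bigl[q \log(q/p_0) + (1-q)\log((1-q)/(1-p_0))\Bigr] = -H_{p_0}(q),
\]
which yields the claimed inequality after multiplication by $n$ and exponentiation.

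There is essentially no obstacle, as this is a textbook computation. The only step requiring any care is the algebraic simplification at $\lambda^*$, and even this can be bypassed by invoking Legendre--Fenchel duality: the relative entropy $H_{p_0}(\cdot)$ is precisely the convex conjugate of the cumulant generating function $\lambda \mapsto \log(1 - p_0 + p_0 e^\lambda)$, so that the optimized exponent equals $-H_{p_0}(q)$ by definition.
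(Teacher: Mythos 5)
Your proof is correct and is the standard Chernoff--Cram\'er argument (Markov on $e^{\lambda X}$, optimize $\lambda$, recognize the Legendre--Fenchel conjugate of the Bernoulli cumulant generating function as the relative entropy $H_{p_0}$); the paper states this lemma without proof, so there is no alternative route to compare against. One small but real point you raise in passing deserves a cleaner statement: for $q < p_0$ the constrained infimum over $\lambda \ge 0$ is attained at $\lambda = 0$ and gives only the trivial bound $1$, whereas $e^{-nH_{p_0}(q)}$ can be far smaller than $\P(\Bin(n,p_0)\ge qn)$ in that regime; so the inequality \eqref{chernoff} as written is only valid for $q \ge p_0$ (your parenthetical about interpreting the right-hand side as $\min\{1,e^{-nH_{p_0}(q)}\}$ does not actually repair it). This is consistent with how the lemma is used in the paper, where every invocation has $q>p_0$, but the lemma statement should carry that hypothesis.
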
 
A consequence of Chernoff's bound is Bernstein's inequality for the binomial distribution.
\begin{lem}[Bernstein's inequality]\label{lem:bernstein}
For positive integer $n$, any $p_0\in (0,1)$ and any $x>0$, we have
\[\mathbb{P}\left[\Bin(n,p_0)\geq np_0+x \right]\leq \exp\left[-\frac{x^2}{2[np_0(1-p_0)+x/3]}\right].\]
\end{lem}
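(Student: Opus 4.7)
My plan is to deduce Bernstein's inequality directly from Chernoff's bound (Lemma~\ref{lem:chernoff}) by extracting a quadratic lower bound on the relative entropy $H_{p_0}$. If $x > n(1-p_0)$ then $\{\Bin(n,p_0) \geq np_0 + x\}$ is empty and the inequality is trivial; otherwise, setting $q = p_0 + x/n \in (p_0, 1]$ and invoking Lemma~\ref{lem:chernoff} gives $\P(\Bin(n,p_0) \geq np_0 + x) \leq \exp(-n H_{p_0}(p_0 + x/n))$. Writing $t = x/n$, it therefore suffices to prove the pointwise inequality
\[
H_{p_0}(p_0 + t) \ \geq\ \frac{t^2}{2[p_0(1-p_0) + t/3]}, \qquad t \in (0, 1-p_0].
\]

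To prove this I would decompose $H_{p_0}$ in terms of the classical Bennett function $\phi(w) := (1+w)\log(1+w) - w$. With the change of variables $u = t/p_0 > 0$ and $v = -t/(1-p_0) \in (-1,0]$ (so $p_0 u + (1-p_0) v = 0$), direct substitution using $(p_0+t)/p_0 = 1+u$ and $(1-p_0-t)/(1-p_0) = 1+v$ yields
\[
H_{p_0}(p_0 + t) \ =\ p_0\,\phi(u) + (1-p_0)\,\phi(v).
\]
Applying the scalar Bennett-to-Bernstein inequality $\phi(w) \geq w^2/(2+2w/3)$ (valid for $w > -1$) to each term, and using $1/a + 1/b = 2/(ab)$ (since $a + b = 2$ for $a = 2p_0 + 2t/3$, $b = 2(1-p_0) - 2t/3$), I would obtain the lower bound $2t^2/(ab)$. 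Expanding $ab = 4p_0(1-p_0) + 4t(1-2p_0)/3 - 4t^2/9$ and comparing with $4(p_0(1-p_0) + t/3)$, the difference reduces to $8 p_0 t/3 + 4 t^2/9 \geq 0$, which gives $ab \leq 4(p_0(1-p_0) + t/3)$ and completes the reduction.

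The only genuinely technical step, and hence the main obstacle, is the Bennett-to-Bernstein scalar inequality $\phi(w) \geq w^2/(2+2w/3)$ on $(-1,\infty)$, which is classical but worth a short argument. I would let $f(w) = \phi(w)(2+2w/3) - w^2$, verify $f(0) = f'(0) = 0$ directly from $\phi'(w) = \log(1+w)$, and then compute
\[
f''(w)\ =\ \tfrac{4}{3}\left[\log(1+w) - \tfrac{w}{1+w}\right].
\]
The bracketed quantity vanishes at $w = 0$ and has derivative $w/(1+w)^2$, whose sign matches that of $w$, so the bracket is nonnegative throughout $(-1, \infty)$. Hence $f$ is convex, and combined with $f(0)=f'(0)=0$ this gives $f \geq 0$, i.e., the desired scalar inequality.
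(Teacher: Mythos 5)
Your derivation is correct, and it fills in precisely the step the paper leaves to the reader: the paper states Lemma~\ref{lem:bernstein} only with the remark that it is ``a consequence of Chernoff's bound,'' with no proof given. Your route --- reduce via Lemma~\ref{lem:chernoff} to the pointwise bound $H_{p_0}(p_0+t)\geq t^2/(2[p_0(1-p_0)+t/3])$, decompose $H_{p_0}(p_0+t)=p_0\phi(u)+(1-p_0)\phi(v)$ with the Bennett function $\phi$ and the zero-mean parametrization $u=t/p_0$, $v=-t/(1-p_0)$, then invoke the scalar inequality $\phi(w)\geq w^2/(2+2w/3)$ on $(-1,\infty)$ --- is the standard way to make that one-line remark rigorous, and all the computations (in particular $f''(w)=\tfrac43[\log(1+w)-w/(1+w)]$ and the comparison $ab\leq 4(p_0(1-p_0)+t/3)$ via $a+b=2$) check out.
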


We will need the following basic properties of the entropy function.

\begin{lem} \label{lem:H}
For $p_0 \in (0,1)$, $H(q)$ is convex in $q \in [0,1]$.  Moreover, 
\beq \label{H-lem}
H_{p}(q) = \begin{cases}
\frac{(q - p)^2}{2 p (1-p)} + O\big(\frac{(q - p)^3}{p^2}\big), & \frac{q}{p} \to 1; \\
p \big(r \log r - r + 1\big), & \frac{q}{p} \to r \in (1, \infty), \ p \to 0; \\
q \log\big(\frac{q}{p}\big) + O(q), & \frac{q}{p} \to \infty.
\end{cases}
\eeq
\end{lem}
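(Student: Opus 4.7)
The plan is a direct calculus argument, starting with convexity and then treating the three asymptotic regimes in turn. For convexity, I differentiate $H_p(q)=q\log(q/p)+(1-q)\log((1-q)/(1-p))$ twice in $q$, obtaining $H_p'(q)=\log(q/p)-\log((1-q)/(1-p))$ and $H_p''(q)=1/q+1/(1-q)=1/[q(1-q)]>0$ on $(0,1)$, which gives strict convexity. As a bonus, this yields $H_p(p)=0$ and $H_p'(p)=0$, which feed directly into the next step.

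For the first regime ($q/p\to 1$), I Taylor expand $H_p$ about $q=p$ to second order with Lagrange remainder. Since $H_p(p)=H_p'(p)=0$ and $H_p''(p)=1/[p(1-p)]$, the expansion reads $H_p(q)=\frac{(q-p)^2}{2p(1-p)}+\frac{H_p'''(\xi)}{6}(q-p)^3$ for some $\xi$ between $p$ and $q$. Using $H_p'''(\xi)=-1/\xi^2+1/(1-\xi)^2$ together with $\xi\asymp p$ (which holds under $q/p\to 1$ since then $\xi/p\to 1$), the remainder is bounded by $O((q-p)^3/p^2)$, as desired.

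For the second regime ($q/p\to r\in(1,\infty)$ with $p\to 0$), I substitute $q=rp$ and expand each summand. The first is exactly $rp\log r$. For the second, Taylor-expanding $\log(1-x)=-x+O(x^2)$ gives $\log((1-rp)/(1-p))=-rp+p+O(p^2)=(1-r)p+O(p^2)$, so $(1-q)\log((1-q)/(1-p))=(1-rp)[(1-r)p+O(p^2)]=(1-r)p+O(p^2)$. Adding produces $H_p(q)=p(r\log r-r+1)+O(p^2)$, which matches the stated leading order since $r\log r-r+1>0$ for $r>1$.

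For the third regime ($q/p\to\infty$), the leading term is $q\log(q/p)$ and the task is to show $(1-q)\log((1-q)/(1-p))=O(q)$. I split into subcases. If $p,q\to 0$ with $p=o(q)$, the same log expansion gives the second summand as $(1-q)[-q+p+O(q^2)]=-q+p+O(q^2)$, which is $O(q)$ since $p=o(q)$. If $q$ is bounded away from both $0$ and $1$, the second summand is bounded and $q$ is too, so it is trivially $O(q)$. If $q\to 1$, then $(1-q)\log(1-q)\to 0$ and $(1-q)\log(1-p)\to 0$ (since $p$ is bounded away from $1$), while $q$ is bounded below, so $O(q)$ again holds. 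None of these subcases presents a real obstacle; the only mild point of care is ensuring the $O$-constants are uniform in the stated regimes, which is automatic from the explicit forms of the first three derivatives of $H_p$ together with the $\xi\asymp p$ control used in the first regime.
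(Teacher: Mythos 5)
Your proof is correct. The paper does not actually supply a proof of Lemma~\ref{lem:H} --- it is treated as a routine calculus fact in the auxiliary-results section --- so there is no "paper's argument" to compare against, but the direct approach you take (differentiate twice for convexity, Taylor expand with Lagrange remainder for the near-diagonal case, and split the middle/far regimes via the expansion of $\log(1-x)$) is exactly the natural one and matches how the lemma is used throughout the paper (the second and third displays are invoked as $\sim$-asymptotics, as you interpret them). One small point worth noting for completeness: in the first regime your bound $H_p'''(\xi) = O(1/p^2)$ uses that $1/(1-\xi)^2$ is dominated by $1/\xi^2 \asymp 1/p^2$, which is valid precisely because $p$ is bounded away from $1$; the paper imposes this on $p_0$ globally (Section~\ref{sec:notation}) and, in the regimes where the lemma is applied to $p_1$, also has $p_1$ bounded away from $1$, so your argument is sound in every case the lemma is actually used.
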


We will also use the following upper bound on the binomial coefficients.

\begin{lem} \label{lem:binom}
For any integers $1 \le k \le n$,
\beq \label{binom}
k \log(n/k) \le \log {n \choose k} \le k \log (ne/k),
\eeq
where $e = \exp(1)$.
\end{lem}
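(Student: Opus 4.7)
\textbf{Proof proposal for Lemma~\ref{lem:binom}.} Both inequalities are classical and follow from elementary manipulations of the product formula $\binom{n}{k} = \prod_{i=0}^{k-1} \frac{n-i}{k-i}$ combined with a crude bound on $k!$, so the plan is short.

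For the lower bound $\log \binom{n}{k} \ge k \log(n/k)$, I would argue termwise on the product expansion. Observe that for every $0 \le i \le k-1$ the inequality $\frac{n-i}{k-i} \ge \frac{n}{k}$ holds, since after clearing denominators it is equivalent to $k(n-i) \ge n(k-i)$, i.e., $i(n-k) \ge 0$, which is true because $n \ge k$. Multiplying these $k$ inequalities yields $\binom{n}{k} \ge (n/k)^k$, and taking logarithms gives the desired lower bound.

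For the upper bound $\log \binom{n}{k} \le k \log(ne/k)$, I would start from the trivial estimate $\binom{n}{k} \le n^k/k!$ and then insert the bound $k! \ge (k/e)^k$. The latter is immediate from the Taylor series $e^k = \sum_{j \ge 0} k^j/j! \ge k^k/k!$, which rearranges to $k! \ge k^k/e^k = (k/e)^k$. Combining the two gives $\binom{n}{k} \le n^k \cdot (e/k)^k = (ne/k)^k$, and one last logarithm finishes the proof.

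Neither step presents a real obstacle; the only thing to double-check is the boundary case $k = n$, where the lower bound reads $\log 1 \ge n \log 1 = 0$ with equality, and the upper bound reads $0 \le n \log e = n$, both of which hold. So the entire proof fits comfortably in a few lines, with the main ingredient being the inequality $k! \ge (k/e)^k$ for the upper bound and the per-factor monotonicity argument for the lower bound.
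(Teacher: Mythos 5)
Your proof is correct and complete: the termwise bound $\frac{n-i}{k-i} \ge \frac{n}{k}$ gives the lower inequality, and $\binom{n}{k} \le n^k/k!$ combined with $k! \ge (k/e)^k$ gives the upper one. The paper states this lemma without proof (it is a standard fact), and yours is the usual elementary argument, so there is nothing further to compare.
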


The next result bounds the hypergeometric distribution with the corresponding binomial distribution.  Let ${\rm Hyp}(N, m, n)$ denotes the hypergeometric distribution counting the number of red balls in $n$ draws from an urn containing $m$ red balls out of $N$.

\begin{lem} \label{lem:hyper}
${\rm Hyp}(N, m, n)$ is stochastically smaller than ${\rm Bin}(n, m/(N-m))$.
\end{lem}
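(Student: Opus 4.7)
The plan is a direct coupling argument based on sequential sampling without replacement. Realize $\operatorname{Hyp}(N, m, n)$ by drawing the $n$ balls one at a time: let $X_k \in \{0,1\}$ indicate whether the $k$-th draw is red, and set $J_{k-1} := X_1 + \cdots + X_{k-1}$. Then
\[\P(X_k = 1 \mid X_1, \ldots, X_{k-1}) = \frac{m - J_{k-1}}{N - k + 1},\]
and $S_n := X_1 + \cdots + X_n \sim \operatorname{Hyp}(N, m, n)$.

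To couple $S_n$ with a binomial from above, I would introduce i.i.d.\ uniforms $U_1, \ldots, U_n \sim \operatorname{Uniform}(0,1)$ and define, on the same probability space, the hypergeometric indicators by $X_k := \1\{U_k \le (m - J_{k-1})/(N - k + 1)\}$, together with $Y_k := \1\{U_k \le m/(N-m)\}$. Then $Y_1, \ldots, Y_n$ are i.i.d.\ $\operatorname{Bernoulli}(m/(N-m))$, so $Y_1 + \cdots + Y_n \sim \operatorname{Bin}(n, m/(N-m))$. If the conditional hypergeometric probability is always at most $m/(N-m)$, then $X_k \le Y_k$ pointwise, giving $S_n \le Y_1 + \cdots + Y_n$ almost surely, which is a strictly stronger conclusion than the claimed stochastic ordering.

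The only nontrivial step is therefore the scalar inequality
\[\frac{m - J_{k-1}}{N - k + 1} \le \frac{m}{N - m}, \quad 1 \le k \le n, \ 0 \le J_{k-1} \le \min(m, k-1),\]
which, after cross-multiplication, reduces to a polynomial condition in $m$, $k$, and $J_{k-1}$. The simple bound $J_{k-1} \ge 0$ immediately gives $(m - J_{k-1})/(N - k + 1) \le m/(N-k+1)$, and I would close the remaining gap by combining this with the range constraints on $J_{k-1}$ dictated by the number $N - m$ of non-red balls available. I expect this algebraic verification of the conditional-probability bound to be the only real obstacle; once it is established, the coupling identity above delivers the lemma in one line.
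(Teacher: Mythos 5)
Your approach --- sequential sampling without replacement plus a coupling via common uniforms --- is the same one the paper's (terse) proof relies on, and the reduction to the scalar inequality $(m - J_{k-1})/(N - k + 1) \le m/(N - m)$ is exactly right.  But you defer the verification of that inequality, and it is not a formality: it fails.  Take $J_{k-1} = 0$, which is admissible whenever $k - 1 \le N - m$; then the inequality reads $m/(N - k + 1) \le m/(N - m)$, i.e.\ $m \ge k - 1$.  For $k = n$ this requires $m \ge n - 1$, and when $m < n - 1$ the bound is violated at such a step.  Worse, the lemma as stated is simply false in that range: for $N = 4$, $m = 1$, $n = 3$ one checks $\P\big({\rm Hyp}(4,1,3) \ge 1\big) = 3/4 > 19/27 = \P\big({\rm Bin}(3, 1/3) \ge 1\big)$.

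Under the hypothesis $m \ge n - 1$ --- which is all the paper ever uses, since the lemma is invoked only with $m = n$ --- your argument closes in one line: for $1 \le k \le n$ and $J_{k-1} \ge 0$,
\[
\frac{m - J_{k-1}}{N - k + 1} \;\le\; \frac{m}{N - n + 1} \;\le\; \frac{m}{N - m},
\]
and the coupling then yields pointwise (hence stochastic) domination.  The paper's own one-line proof has the identical unacknowledged gap; you should state $m \ge n - 1$ as an explicit hypothesis (or note that it holds automatically in the application) rather than rely on ``the range constraints on $J_{k-1}$'' to rescue the general case, because they do not.
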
 

\begin{proof}
  Suppose the balls are picked one by one without replacement.  At
  each stage, the probability of selecting a red ball is smaller than
  $m/(N-m)$.  The result follows.
\end{proof}

\subsection{Proof of \thmref{clique}}
\label{sec:clique-proof}

Following standard lines, we start by reducing the composite alternative to a simple alternative by considering the uniform prior $\pi$ on subsets $S \subset [N] := \{1, \dots, N\}$ of size $|S| = n$.  The resulting likelihood ratio is
\beq
L = \frac{\# \{S \subset [N]: |S| = n, W_S = \nn\}}{{N \choose n} p_0^{n(n-1)/2}},
\eeq
which is the observed number of cliques of size $n$ divided by the expected number under the null.  

The risk of any test for the original problem is well-known to be bounded from below by the risk of the likelihood ratio test $\{L > 1\}$ for this `averaged' problem, which is equal to
\[
\gamma_L := \P_0(L > 1) + \E_0(L \{L \le 1\}).
\]
Therefore, it suffices to show that $\gamma_L \to 1$.  Here we use arguably the simplest method, a second moment argument, which is based on the fact that
\[
\gamma_L = 1 - \E_0 |L - 1| \ge 1 - \sqrt{\Var_0(L)},
\]
by the Cauchy-Schwarz inequality, so that it is enough to prove that $\Var_0(L) \to 0$.  We do so by showing that $\E_0 (L^2) \le 1 + o(1)$.  

Note that 
\[
L = p_0^{- \nn} \pi\left[W_S = \nn\right],
\]
where $\pi[\cdot]$ denotes the expectation with respect to $\pi$.  Hence, 
by Fubini's theorem, we have
\[
\E_0 L^2 = \pi^{\otimes 2}\left[p_0^{- 2\nn} \P_0(W_{S_1} = W_{S_2} = \nn)\right] = \pi^{\otimes 2}\left[p_0^{-K(K-1)/2}\right],
\]
where $K := |S_1 \cap S_2|$.  Indeed, the event $\{W_{S_1} = W_{S_2} = \nn\}$ means that all edges between pairs of nodes in $S_1$ exist, and similarly for $S_2$, and there are a total of $n(n-1) + K(K-1)/2$ such edges. 

Before going further, note that \eqref{clique1} and \eqref{binom} imply that
\beq \label{clique1-log}
\log(N/n) - \frac{(n-1)}2 \log(1/p_0) \to \infty.
\eeq
In particular, this means that $n \le 3 \log N$, eventually, and therefore
\beq \label{rho}
\frac{n^2}{N} = O((\log N)^2/N) \to 0.
\eeq

Since $K \sim {\rm Hyp}(N, n, n)$, by \lemref{hyper}, $K$ is stochastically bounded by $\Bin(n, \rho)$, where $\rho := n/(N-n)$.  Hence, with and \lemref{chernoff}, we have
\begin{eqnarray}
\P(K \ge k) 
&\le& \P({\rm Hyp}(N, n, n) \ge k) \nonumber \\
&\le& \P(\Bin(n, \rho) \ge k) \nonumber \\
&\le& \exp\left(- n H_{\rho}(k/n) \right) \label{Kbound}.
\end{eqnarray}
Now, using \lemref{H} and \eqref{rho}, for $k \ge 2$ we get
\[
n H_{\rho}(k/n) = k \log(k/(n \rho)) + O(k) = k \log (k N/n^2) + O(k).
\]

Hence,
\begin{eqnarray}
\pi^{\otimes 2}\left[p_0^{-K(K-1)/2}\right] 
&=& \P_0(K \le 1) + \sum_{k=2}^n \exp\left(\frac{k(k-1)}2 \log(1/p_0) -n H_{\rho}(k/n)\right) \nonumber \\
&\le& 1 + \sum_{k=2}^n \exp\left(k \left[\frac{(k-1)}2 \log(1/p_0) - \log (k N/n^2) + O(1)\right] \right) \label{sum1}.
\end{eqnarray}

For $a > 0$ fixed, the function $x \to a x - \log x$ is decreasing on $(0, 1/a)$ and increasing on $(1/a, \infty)$.  Therefore,
\[
\frac{(k-1)}2 \log(1/p_0) - \log(k N/n^2) \le -\omega,
\]
where
\[ 
\omega := \min \left(\log(N/n^2) - \frac12 \log(1/p_0), \ \log(N/n) -\frac{n-1}2 \log(1/p_0)\right).
\]
By \eqref{clique1-log}, the second term in the maximum tends to $\infty$.  This also the case of the first term, since 
\[
\log(N/n^2) -\frac12 \log(1/p_0) = \log(N/n) -\frac{n-1}2 \log(1/p_0) + \frac{n}2 \log(1/p_0) -\log n,
\]
with the second difference bounded from below.  Hence, $\omega \to\infty$.  Hence, the sum in \eqref{sum1} is bounded by 
\[
\sum_{k=2}^n \exp\left(k [\omega + O(1)]\right) \le \frac{e^{-\omega/2}}{1 -e^{-\omega/2}} \to 0,
\]
eventually.

Hence we showed that $\E_0 (L^2) \le 1 + o(1)$ and the proof of \thmref{clique} is complete.

\subsection{Proof of \thmref{lower}}
\label{sec:lower-proof}

We assume that \eqref{n-p0}, \eqref{lower1} and \eqref{lower2} hold.  We reduce the composite alternative to a simple alternative by considering the uniform prior $\pi$ on subsets $S \subset [N] := \{1, \dots, N\}$ of size $|S| = n$.  The resulting likelihood ratio is
\beq \label{L}
L(A) = {N \choose n}^{-1} \sum_{|S| = n} L_S(A) = \pi\big[ L_S(A) \big], 
\eeq
where $\pi[ \cdot ]$ is the expectation with respect to $S \sim \pi$, $A = (W_{i,j} : 1 \le i < j \le N)$ and 
\beq \label{Ldef}
L_S := \exp(\theta W_S - \Lambda(\theta) \nn),
\eeq
with
\beq \label{theta-def}
\theta := \theta_{p_1}, \quad \theta_q := \log\left(\frac{q (1-p_0)}{p_0 (1-q)}\right)
\eeq
and
\[
\Lambda(\theta) := \log(1 -p_0 +p_0 e^\theta),
\]
which is the moment generating function of ${\rm Bern}(p_0)$. 

Still leaving $p_0$ implicit, let $H_{p_0}(q)$ be short for $H(q)$.  It is well-known that $H$ is the Fenchel-Legendre transform of $\Lambda$; more specifically, for $q \in (p_0,1)$, 
\beq \label{HL}
H(q) = \sup_{\theta \ge 0} [q \theta - \Lambda(\theta)] = q \theta_q - \Lambda(\theta_q).
\eeq 
The second moment argument used in \secref{clique-proof} is also applicable here, though it does not yield sharp bounds.  In Case 1 below (see Subsection \ref{sec:proof_kmin}), which is the regime where the moderate deviations of the binomial come into play, this method leads to a requirement that the limit superior in \eqref{lower2} be bounded by $1/2$ instead of 1.  And, worse than that, in Case 3 below, which is the regime where the large deviations of the binomial are involved, it does not provide any useful bound whatsoever. 

Fortunately, a finer approach was suggested by \cite{MR1456646}.  The refinement is based on bounding the first and second moments of a truncated likelihood ratio.  Here we follow \cite{1109.0898}.  They work with the following truncated likelihood
\[
\Lt = {N \choose n}^{-1} \sum_{|S| = n} \1_{\Gamma_S} \, L_S\ .
\]
where the events $\Gamma_S$ will be  specified below. We note $\Gamma = \bigcap_{|S| = n} \Gamma_S$. 
Using the triangle inequality, the fact that $\Lt\leq L$ and the Cauchy-Schwarz inequality, we have the following upper bound:
\beqn
\E_0|L-1| 
&\le& \E_0|\Lt-1|+\E_0 (L-\Lt) \\
&\le& \sqrt{\E_0[\Lt^2] - 1 + 2 (1 - \E_0[\Lt])} + (1-E_0[\Lt]) \ ,
\eeqn
so that $\gamma_L\to 1$ when $\E_0[\Lt^2]\to 1$ and $\E_0[\Lt] \to 1$. Note that contrary to  \cite{1109.0898}, we do not require that $\P_0(\Gamma)\to 1$.
 More precisely, we shall prove that $(1,1)$ is an accumulation point of any subsequence of $(\E_{0}\Lt, \E_0[\Lt^2])$.
Adopting this approach allows us to assume that 
$p_1/p_0$ converges to $r\in [1,\infty]$, $p_1^2/p_0$ converges to $r_2\in [0,\infty]$ and that  
\beq \label{lower-2}
\frac{n H(p_1)}{2 \log (N/n)} < 1 - \eta_0,
\eeq
for some $\eta_0 \in (0,1)$ fixed.  Notice that \eqref{n-log} and \eqref{lower2} imply that $H(p_1) \to 0$, which by \lemref{H} forces either $p_1/p_0 \to 1$ or $p_1 \to 0$; in any case, $p_1$ is bounded away from 1 this time.

In what follows, we provide the general arguments while the proof of the technical results (Lemmas \ref{lem:kmin}-\ref{lem:ent_p1_qk}) is postponed to the end of the section. To show these technical results, we  divide the analysis depending on the behaviour of $p_1/p_0$
\begin{numcases}{\frac{p_1}{p_0} \to }
r = 1, \label{case1}\\
r \in (1,\infty), \label{case2}\\
r = \infty. \label{case3} 
\end{numcases}
In regime \eqref{case1}, the moderate deviations of the binomial distribution dominate and these are asymptotically equivalent to normal (Gaussian) deviations; in particular, it is in this setting (with $p_0$ constant) that \cite{1109.0898} successfully reduce the binary setting to the normal setting.  In regime \eqref{case3}, the large deviations of the binomial distribution dominate, which are not alike the normal deviations and lead to a completely different regime.  Regime \eqref{case2} is intermediary and requires special treatment.

First, we need some notations to introduce $\Gamma_S$.
Define the numbers 
\begin{eqnarray}
 k_*&=& \left[1+ 2\, \frac{\log(N/n)}{\log\left(1+\frac{(p_1-p_0)^2}{p_0(1-p_0)}\right)}\right]\wedge n\label{eq:def_k*} \ ,\\
k_{\min}& = &  \left[1+ 2\, \frac{\log\left(\frac{Nk_{*}}{n^2}\right)-\log\left\{\log\left(\frac{n}{\log(N/n)}\right)\wedge \log(N/n)\right\}}{\log\left(1+\frac{(p_1-p_0)^2}{p_0(1-p_0)}\right)}\right] \wedge n\label{eq:def_kmin}\ .
\end{eqnarray}
The exact expression of $k_{\min}$  will be useful for bounding the second moment of $\Lt$. For the time being, we only need to have in mind the properties summarized in the following lemma. 
\begin{lem}\label{lem:kmin}
We have $k_{\min}\rightarrow \infty$, $k_{\min}\sim k_*$,  and $\log(n/k_{\min})=o\left[\log(N/n)\right]$.
\end{lem}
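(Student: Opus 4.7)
The plan is to establish the three assertions by comparing $k_{\min}$ to $k_*$ through the explicit defining formulas. Let $a := \log(N/n)$, $\alpha := \log(1 + (p_1-p_0)^2/(p_0(1-p_0)))$, $\tau := 2/\alpha$, and $c := \log(n/a) \wedge a$, so that $k_* = (1 + \tau a) \wedge n$ and $k_{\min} = (1 + \tau[\log(Nk_*/n^2) - \log c]) \wedge n$. First I note that $a \to \infty$ (from $n/N \to 0$) and $n/a \to \infty$ (from \eqref{n-log}), giving $c \to \infty$ and $\log c = O(\log a) = o(a)$.

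The central step is to prove $\log(n/k_*) = o(a)$. If $k_* = n$ this is trivial; otherwise $k_* = 1 + \tau a$ and $\log(n/k_*) \le \log(n\alpha/(2a)) = \log(n\alpha) + O(\log a)$, so it reduces to showing $\log(n\alpha) = o(a)$. Splitting $\log(n\alpha) = \log(nH(p_1)) + \log(\alpha/H(p_1))$ and using \eqref{lower-2} to obtain $\log(nH(p_1)) = O(\log a) = o(a)$, the task becomes $\log(\alpha/H(p_1)) = o(a)$. A case analysis via Lemma~\ref{lem:H} on the subsequence limit $p_1/p_0 \to r$ handles this: in regime \eqref{case1}, $\alpha \sim 2H(p_1)$; in regime \eqref{case2}, $\alpha/H(p_1)$ tends to a positive constant; in regime \eqref{case3} I use the bound $\alpha = O(\log(1/p_0))$ with $H(p_1) \sim p_1 \log(p_1/p_0)$ and $nH(p_1) = O(a)$, then sub-split on whether $np_1 \to \infty$ or stays bounded. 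In the first sub-case, $\log(p_1/p_0) = O(a/(np_1)) = o(a)$, yielding $\alpha = o(a)$; in the second, \eqref{n-p0} combined with $p_0 = o(p_1^2) = o(1/n^2)$ forces $\log n = o(a)$, so $\alpha \le O(\log(1/p_0)) \le O(\log n) + o(a) = o(a)$.

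With $\log(n/k_*) = o(a)$ and $\log c = o(a)$ in hand, a direct manipulation of the formulas yields the key inequality
\[
 k_* - k_{\min} \;\le\; \tau\bigl[\log(n/k_*) + \log c\bigr],
\]
from which the three claims follow. For Claim~2, $k_{\min} \sim k_*$, I divide through by $k_*$: when $k_* < n$, $k_* \sim \tau a$ so $(k_* - k_{\min})/k_* \le [\log(n/k_*) + \log c]/a \to 0$; when $k_* = n$, $\tau \ge (n-1)/a$ yields the same conclusion. Claim~3 follows from $\log(n/k_{\min}) = \log(n/k_*) + \log(k_*/k_{\min}) = o(a) + o(1) = o(a)$. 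Claim~1 reduces via Claim~2 to $k_* \to \infty$, which is immediate if $k_* = n$, and otherwise follows from $k_* - 1 = 2a/\alpha$ combined with $\alpha = o(a)$ established above.

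The main obstacle lies in regime \eqref{case3} in the large-deviation sub-case where $\alpha \to \infty$: one must carefully synthesize \eqref{lower-2}, \eqref{n-p0}, and the standing assumption $n^2 p_1 \to \infty$ to upgrade the naive bound $\log(n\alpha) = O(a)$ into $\log(n\alpha) = o(a)$, which is precisely what forces $k_* \to \infty$ rather than remaining bounded and makes all three claims compatible.
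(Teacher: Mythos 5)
Your overall structure — reducing all three claims to $k_* \to \infty$ and $\log(n/k_*) = o(a)$ via the exact algebraic relation between $k_{\min}$ and $k_*$ — is sound and mirrors the paper's opening step. Cases~\eqref{case1} and~\eqref{case2} of your case analysis are also fine. However, there is a genuine gap in Case~\eqref{case3}.

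First, the claim ``$p_0 = o(p_1^2)$'' in your second sub-case ($np_1$ bounded) is unjustified and in fact can fail. Take $n = N^{1/2}$, $p_0 = 1/(n\log N)$, $p_1 = C/n$ for a constant $C>1$: then $n/\log N\to\infty$, $\log(1/(np_0)) = \log\log N = o(a)$ with $a=\tfrac12\log N$, $nH(p_1) = O(\log\log N) = o(a)$, $n^2 p_1\to\infty$, so all hypotheses hold and $p_1/p_0 = C\log N\to\infty$ with $np_1$ bounded; yet $p_0/p_1^2 = n/(C^2\log N)\to\infty$. In this example $\log n = a$, so the chain ``$p_0 = o(1/n^2) \Rightarrow \log n = o(a)$'' cannot be rescued. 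Second, and more structurally, even when you do establish $\alpha = o(a)$, this is not the quantity your reduction requires. You reduced $\log(n\alpha) = o(a)$ to $\log(\alpha/H(p_1)) = o(a)$, but in Case~\eqref{case3} you instead argue that $\alpha = o(a)$; neither of these implies $\log(n\alpha) = o(a)$ without additionally controlling $\log n$, and that control is exactly what is missing. Your bound $\alpha = O(\log(1/p_0))$ is too lossy: it discards the information that $(p_1-p_0)^2/(p_0(1-p_0))$ itself may be small. The paper's Case~3 argument replaces it with the elementary inequality $\alpha = \log\bigl(1+(p_1-p_0)^2/(p_0(1-p_0))\bigr) \le p_1^2/(p_0(1-p_0))$. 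Combining this with $np_1 \prec a$ (which follows from $p_1\log(p_1/p_0)\prec a/n$ and $\log(p_1/p_0)\to\infty$) gives $n\alpha/a \prec np_1^2/(ap_0) \prec a/(np_0)$, and then \eqref{n-p0} directly yields $\log(n/k_*) \prec \log\bigl(a/(np_0)\bigr)\vee 0 + O(1) = o(a)$, with no sub-splitting on $np_1$ and no need for $\log n = o(a)$. In the example above this bound gives $\log(n/k_*)\prec \log\log N = o(a)$, confirming the lemma, whereas your route provides no such control.
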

 We define $\Gamma_S $ as follows
\beq \label{GammaS1}
\Gamma_S := \bigcap_{k=\lfloor \kmin\rfloor +1 }^n \{W_T \leq w_k, \ \forall T \subset S \text{ such that } |T| =k\} \ ,
\eeq
where  $w_k := q_k \kk$, with
\beq \label{qk}
\frac{(k-1)}2 H(q_k) =  \log(N/k)+2\ .
\eeq
This construction is possible by the following lemma, which serves as a definition.
\begin{lem}\label{lem:qk}
For any integer $k$ between $k_{\min}+1$ and $n$, there exists a unique $q_k\in (p_0,1)$ such that
\begin{eqnarray*}
\frac{(k-1)}2 H(q_k) =  \log(N/k)+2\ .
\end{eqnarray*}
Moreover, $q_k$ satisfies $\theta_{q_k}\leq 2\theta$.
\end{lem}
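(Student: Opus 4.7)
The proof strategy rests on two observations about $H = H_{p_0}$: it is continuous and strictly increasing from $H(p_0) = 0$ to $H(1) = \log(1/p_0)$ on $[p_0, 1]$ (since $H'(q) = \theta_q > 0$ for $q > p_0$), and $q \mapsto \theta_q$ is itself continuous and strictly increasing. For existence and uniqueness, by the intermediate value theorem it suffices to verify $[2\log(N/k) + 4]/(k-1) < \log(1/p_0)$ for $k \in [k_{\min}+1, n]$, which I would confirm by substituting \eqref{eq:def_kmin} for $k_{\min}$ and invoking \lemref{kmin} together with the quasi-normal hypothesis \eqref{n-p0}.

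For the bound $\theta_{q_k} \leq 2\theta$, define $q^* \in (p_0, 1)$ by $\theta_{q^*} = 2\theta_{p_1}$. Monotonicity of $H$ on $[p_0, 1]$ turns this inequality into
\[
(k-1)\, H(q^*) \ \geq \ 2\log(N/k) + 4,
\]
which I would establish by a case-by-case lower bound on $H(q^*)$ following \eqref{case1}--\eqref{case3}. In regime \eqref{case1}, Taylor expansion of $\theta_q \sim (q-p_0)/[p_0(1-p_0)]$ gives $q^* - p_0 \sim 2(p_1 - p_0)$ and hence $H(q^*) \sim 4 H(p_1)$ via \lemref{H}. In regime \eqref{case2}, direct computation yields $q^*/p_0 \to r^2$, so $H(q^*)/H(p_1) \to (2r^2 \log r - r^2 + 1)/(r \log r - r + 1)$; this ratio exceeds $4$ for all $r > 1$, as seen from $f(r) := (2r^2\log r - r^2 + 1) - 4(r\log r - r + 1)$ satisfying $f(1) = 0$ and $f'(r) = 4(r-1)\log r > 0$. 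In regime \eqref{case3}, \eqref{n-log} and \eqref{lower-2} force $p_1 \to 0$, so $q^* \to 1$, $H(q^*) \to \log(1/p_0)$, while $H(p_1) \sim p_1 \log(p_1/p_0) = o(\log(1/p_0))$, giving $H(q^*)/H(p_1) \to \infty$.

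To close, I would combine these bounds with \eqref{eq:def_k*} (which via \lemref{H} gives $(k_*-1) H(p_1) \gtrsim \log(N/n)$ in regimes \eqref{case1}--\eqref{case2}) and \lemref{kmin} (so $k-1 \geq k_{\min} \sim k_*$ and $\log(N/k) = (1+o(1))\log(N/n)$), obtaining
\[
(k-1) H(q^*) \ \geq \ (4-o(1))(k-1) H(p_1) \ \gtrsim \ 4\log(N/n) \ > \ 2\log(N/k) + 4
\]
eventually. The main obstacle is regime \eqref{case3}, where $H$ no longer admits a clean quadratic approximation and the factor $4$ does not emerge cleanly; there I would estimate $(k_*-1) H(q^*)$ directly using the defining relation of $k_*$ and treat the two sub-regimes $p_1^2/p_0 \to 0$ and $p_1^2/p_0 \to \infty$ separately, using \eqref{n-p0} to rule out degenerate boundary cases.
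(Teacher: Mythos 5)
Your overall strategy --- define $q^*$ by $\theta_{q^*}=2\theta$, use monotonicity of $H$ on $[p_0,q^*]$ to fold both existence and the bound $\theta_{q_k}\leq 2\theta$ into the single inequality $(k-1)H(q^*)\geq 2\log(N/k)+4$, then split on the behaviour of $p_1/p_0$ --- is exactly the paper's approach. Your regimes \eqref{case1} and \eqref{case2} are handled correctly (in regime \eqref{case2} the quantity that actually matters is the product $H(q^*)/H(p_1)\cdot (k_*-1)H(p_1)/\log(N/n)\sim 2[2r^2\log r - r^2+1]/(r-1)^2$, which your two sub-bounds do combine to show is $\geq 4$, but this should be made explicit since neither factor alone carries a fixed constant $\geq 2$). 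However, two things are off in regime \eqref{case3}.

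First, your statement ``$q^*\to 1$, $H(q^*)\to\log(1/p_0)$'' only holds when $p_1^2/p_0\to\infty$. When $p_1/p_0\to\infty$ but $p_1^2/p_0\to 0$ one has $q^*\sim p_1^2/p_0\to 0$, and when $p_1^2/p_0\to r_2\in(0,\infty)$ one has $q^*\to 1/(1+r_2)<1$. The conclusion $H(q^*)/H(p_1)\to\infty$ happens to be true in all three sub-cases, but it does not close the argument: in regime \eqref{case3} the factor $(k_*-1)H(p_1)$ is \emph{not} $\gtrsim\log(N/n)$ with a fixed constant (since $H(p_1)/\Delta\to 0$ there), so the chain $(k-1)H(q^*)\geq(4-o(1))(k-1)H(p_1)\gtrsim 4\log(N/n)$ cannot be used. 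You must instead compare $H(q^*)$ directly to $\Delta=\log(1+(p_1-p_0)^2/(p_0(1-p_0)))$, since $(k_*-1)\sim 2\log(N/n)/\Delta$.

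Second, and more seriously, the sub-case $p_1^2/p_0\to\infty$ with $\log(p_1)/\log(p_0)\to 0$ is the genuine crux of the lemma, and you flag it as ``the main obstacle'' but leave it open. There $H(q^*)/\Delta\sim\log(1/p_0)/\log(p_1^2/p_0)\to 1$, so the needed margin does not come from a fixed constant $\epsilon>0$. The paper closes this by a second-order estimate: using $1-q^*\leq p_0/p_1^2$ to get $H(q^*)\geq(1-p_0/p_1^2)\log(1/p_0)+o(1)$, one finds $H(q^*)/\Delta -1\geq (2+o(1))\log(1/p_1)/\log(1/p_0)$, and then it must be verified that the error terms coming from $\log(N/k_{\min})$, $\log(n/k_*)$, and the $\log\log$ correction in the definition \eqref{eq:def_kmin} are all dominated by $\log(1/p_1)/\log(1/p_0)\cdot\log(N/n)$. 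This in turn uses the consequence of \eqref{lower-2} that $p_1\log(p_1/p_0)\prec\log(N/n)/n$ together with \eqref{n-p0} and \eqref{n-log} to force $\log n=o(\log(N/n))$. Without carrying out this computation your proposal does not establish the lemma in the most delicate regime.
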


\subsubsection{First truncated moment}   
We first prove that $\E_0 \Lt \to 1$.  By Fubini's theorem, we have
\[
\E_0 \Lt = \pi \big[\E_0 [L_S \1_{\Gamma_S}] \big ] = \pi\big[\P_S(\Gamma_S)\big] = \P_S(\Gamma_S),
\]
where $S$ is any fixed subset of size $n$ in $\{1, \dots, N\}$ and this last inequality is by the fact that $\P_S(\Gamma_S)$ does not depend on $S$ by symmetry.  
By the union bound, Chernoff's bound \eqref{chernoff} and \eqref{binom},
\beqn
1 - \P_S(\Gamma_S) 
&\le& \sum_{k = \lfloor \kmin\rfloor +1}^n \ \sum_{T \subset S, |T|=k} \P_S(W_T > q_k \kk) \\
&\le& \sum_{k = \lfloor \kmin\rfloor +1}^n {n \choose k} \P\big(\Bin(\kk, p_1) > q_k \kk\big) \\
&\le& \sum_{k = \lfloor \kmin\rfloor +1}^n \exp\left[k \left(\log(ne/k) - \frac{(k-1)}2 H_{p_1}(q_k) \right)\right] \ .
\eeqn
We then conclude that $1 - \P_S(\Gamma_S) = o(1)$ using the following result.

\begin{lem}\label{lem:ent_p1_qk}
We have
\beq\label{eq:ent_p1_qk}
\min_{k= \lfloor k_{\min}\rfloor +1,\ldots,n}\left(\frac{k-1}{2}H_{p_1}(q_k)-\log\left(\frac{n}{k}\right)\right)\rightarrow \infty\ .
\eeq 
\end{lem}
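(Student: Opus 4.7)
The plan is to combine Fenchel duality, the defining relation \eqref{qk} for $q_k$, hypothesis \eqref{lower-2}, and the bound $\theta_{q_k} \le 2\theta_{p_1}$ from Lemma \ref{lem:qk} into a single estimate. The starting point is the identity
\[
H_{p_1}(q) \;=\; H_{p_0}(q) \;-\; H_{p_0}(p_1) \;-\; \theta_{p_1}(q - p_1), \qquad q \in (0,1),
\]
which follows from \eqref{HL} together with the direct computation $\Lambda_{p_0}(\theta_{p_1}) = \log((1-p_0)/(1-p_1))$; geometrically, $H_{p_1}$ is $H_{p_0}$ minus its supporting line at $p_1$. Evaluating at $q = q_k$, using \eqref{qk}, and writing $\log(N/k) = \log(N/n) + \log(n/k)$, the target rewrites as
\[
\frac{k-1}{2} H_{p_1}(q_k) - \log(n/k) \;=\; \log(N/n) + 2 \;-\; \frac{k-1}{2} H_{p_0}(p_1) \;-\; \frac{k-1}{2}\theta_{p_1}(q_k - p_1).
\]

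The first negative term is absorbed immediately by \eqref{lower-2} and the monotonicity $k \le n$: $\frac{k-1}{2}H_{p_0}(p_1) \le (1-\eta_0)\log(N/n)$. Note also that $q_k > p_1$ throughout, for if $q_k \le p_1$, monotonicity of $H_{p_0}$ on $[p_0,1]$ would yield $\frac{k-1}{2}H_{p_0}(q_k) \le (1-\eta_0)\log(N/n)$, contradicting \eqref{qk}. The lemma is therefore reduced to showing
\[
\frac{k-1}{2}\theta_{p_1}(q_k - p_1) \;\le\; (\eta_0 - \delta)\log(N/n) + O(1), \qquad k \in \{\lfloor k_{\min}\rfloor + 1, \ldots, n\},
\]
for some fixed $\delta > 0$. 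A generic upper bound follows from $\Lambda'_{p_0}(\theta) = q(\theta)$, convexity of $\Lambda_{p_0}$, and Lemma \ref{lem:qk}:
\[
q_k - p_1 \;=\; \Lambda'_{p_0}(\theta_{q_k}) - \Lambda'_{p_0}(\theta_{p_1}) \;\le\; q_k\bigl(\theta_{q_k} - \theta_{p_1}\bigr) \;\le\; q_k\,\theta_{p_1},
\]
so that $\theta_{p_1}(q_k - p_1) \le q_k \theta_{p_1}^2$.

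The main obstacle is that this generic estimate is too coarse in the quasi-normal regime \eqref{case1}, where Taylor expansion (Lemma \ref{lem:H}) gives $q_k \theta_{p_1}^2 \sim 2 H_{p_0}(p_1)$ and hence only $\frac{k-1}{2}\theta_{p_1}(q_k - p_1) \lesssim 2(1-\eta_0)\log(N/n)$, sufficient only when $\eta_0 > 2/3$. To handle arbitrary $\eta_0 > 0$, I bypass this crude bound and work directly with the matching expansion of $H$ from Lemma \ref{lem:H} in each of \eqref{case1}--\eqref{case3}. In \eqref{case1} I plug the normal expansion $H_p(q) \sim (q-p)^2/[2p(1-p)]$ into \eqref{qk} and \eqref{lower-2}, set $X = (q_k - p_0)\sqrt{(k-1)/[4p_0(1-p_0)]}$ and $Y = (p_1 - p_0)\sqrt{(k-1)/[4p_0(1-p_0)]}$, and observe that $X^2 \sim \log(N/k) + 2$ while $Y^2 \le (k-1)(1-\eta_0)\log(N/n)/(n-1)$; since $q_k > p_1$, $\frac{k-1}{2}H_{p_1}(q_k) \ge (X - Y)^2(1 + o(1))$, and Lemma \ref{lem:kmin} ($\log(n/k) = o(\log(N/n))$) then yields $\frac{k-1}{2}H_{p_1}(q_k) - \log(n/k) \ge (1 - \sqrt{1-\eta_0})^2 \log(N/n)(1 + o(1)) \to \infty$. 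Cases \eqref{case2} and \eqref{case3} are handled analogously after substituting the Poisson expansion $H_p(q) \sim p\,L(q/p)$ (where $L(x) := x\log x - x + 1$) or the large-deviation expansion $H_p(q) \sim q\log(q/p)$, reparametrizing by $s := q_k/p_0 \in [r(1+o(1)), r^2(1+o(1))]$ (the upper end coming from Lemma \ref{lem:qk}), and reducing to a one-variable convexity inequality. In every regime the reservoir $\log(N/n) + 2 - \log(n/k) \gtrsim \log(N/n)$ from \eqref{qk} and Lemma \ref{lem:kmin}, together with the factor $\eta_0$ from \eqref{lower-2}, dominates the residual $\frac{k-1}{2}\theta_{p_1}(q_k - p_1)$ contribution.
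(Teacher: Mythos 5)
Your opening identity $H_{p_1}(q) = H_{p_0}(q) - H_{p_0}(p_1) - \theta_{p_1}(q-p_1)$ is correct (it is the tangent-line interpretation of the Fenchel--Legendre transform), and the resulting reformulation
\[
\frac{k-1}{2}H_{p_1}(q_k) - \log\!\left(\frac nk\right)
= \log\!\left(\frac Nn\right) + 2 - \frac{k-1}{2}H_{p_0}(p_1) - \frac{k-1}{2}\theta_{p_1}(q_k - p_1)
\]
is a clean packaging of what the paper does less explicitly. Your treatment of Case~\eqref{case1} is correct and in fact coincides with the paper's argument: your $X$ and $Y$ are exactly $(q_k-p_0)$ and $(p_1-p_0)$ rescaled, the ratio $Y/X\le\sqrt{1-\eta_0}(1+o(1))$ is the paper's $\tfrac{p_1-p_0}{q_k-p_0}$ bound, and the final $(1-\sqrt{1-\eta_0})^2\log(N/n)$ estimate is identical. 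So your overall route is the same as the paper's; the identity just reorganizes the bookkeeping.

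The genuine gap is in your claim that Cases~\eqref{case2} and \eqref{case3} are handled "analogously'' via the reparametrization $s := q_k/p_0 \in [r(1+o(1)), r^2(1+o(1))]$ and a one-variable convexity inequality. For Case~\eqref{case2} this works, and a short computation confirms the reduction: with $s = q_k/p_0$ and $f(x) = x\log x - x + 1$ the target becomes $\log(N/n)\,\frac{f(s)-f(r)-(s-r)\log r}{f(s)}(1+o(1))$, the numerator is $\int_r^s \log(t/r)\,dt > 0$, and one must still argue that $s$ is bounded away from $r$, which follows from $f(s)\ge f(r)/(1-\eta_0)(1+o(1))$. But in Case~\eqref{case3} your stated reparametrization is vacuous ($r=\infty$), and the proposed "substitute the large-deviation expansion'' recipe does not go through as written: while $q_k/p_0\to\infty$, the ratio $q_k/p_1$ need \emph{not} tend to infinity (it can sit near $1/(1-\eta_0)$), so the large-deviation form of $H_{p_1}(q_k)$ is not available. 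The paper's Case~3 argument is genuinely different here --- it establishes the dichotomy $q_k/p_1 \ge (n/2k)\vee(1-\eta_0/2)^{-1}(1+o(1))$, uses $\log(q_k/p_0)\le 2\log(p_1/p_0)$, and crucially invokes the fact (from the proof of Lemma~\ref{lem:kmin}, Case~3) that $\log(p_1/p_0) = o(\log(N/n))$ to get a nontrivial gain --- none of which appears in your sketch. You should spell out Case~\eqref{case3} rather than appeal to analogy, because the convexity-inequality machinery of Case~\eqref{case2} does not transfer.
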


\subsubsection{Second truncated moment}
We now prove that $\E_0 \Lt^2 \le 1 + o(1)$, which with $\E_0 \Lt \to 1$ shows that $\Var_0(\Lt) \to 0$.  
Let $S_1, S_2 \iid \pi$ and define $K = |S_1 \cap S_2|$.  By Fubini's theorem, we have
\beqn
\E_0 \Lt^2 
&=& \E_{S_1,S_2} \E_0 \left(L_{S_1} L_{S_2} \1_{\Gamma_{S_1}} \1_{\Gamma_{S_2}} \right) \\
&=& \pi^{\otimes 2}\big[ \E_0 \left(\exp\left(\theta (W_{S_1} + W_{S_2}) - 2 \Lambda(\theta) \nn\right) \1_{\Gamma_{S_1} \cap \Gamma_{S_2}} \right) \big]\ .
\eeqn
Define 
\[
W_{S \times T} = \frac12 \sum_{i \in S, j \in T} W_{i,j}\ ,
\]
and note that $W_S = W_{S \times S}$.
We use the decomposition
\beq \label{decomp}
W_{S_1} + W_{S_2} = W_{S_1 \times (S_1 \setminus S_2)} + W_{S_2 \times (S_2 \setminus S_1)} + 2 W_{S_1 \cap S_2}\ ,
\eeq
the fact that
\[
\Gamma_{S_1} \cap \Gamma_{S_2} \subset \{W_{S_1 \cap S_2} \le w_K\}\ ,
\]
and the independence of the random variables on the RHS of \eqref{decomp}, to get
\[
\E_0 \left(\exp\left(\theta (W_{S_1} + W_{S_2}) - 2 \Lambda(\theta) \nn\right) \1_{\Gamma_{S_1} \cap \Gamma_{S_2}} \right) \le {\rm I} \cdot {\rm II} \cdot {\rm III}\ ,
\]
where
\[
{\rm I} := \E_0 \exp\left(\theta W_{S_1 \times (S_1 \setminus S_2)} - \frac{\Lambda(\theta)}2 (n-K)(n+K-1)\right) = 1\ , 
\]
\[
{\rm II} := \E_0 \exp\left(\theta W_{S_2 \times (S_2 \setminus S_1)} - \frac{\Lambda(\theta)}2 (n-K)(n+K-1)\right) = 1\ , 
\]
\[
{\rm III} := \E_0 \left(\exp\left(2 \theta W_{S_1 \cap S_2} - 2 \Lambda(\theta) K^{(2)} \right) \IND{W_{S_1 \cap S_2} \le w_K}\right)\ . 
\]
The first two equalities are due to the fact that the likelihood integrates to one. 

To bound ${\rm III}$, we follow \cite{1109.0898}, with a twist.  When $K \leq  \kmin$, we will use the obvious bound:
\beqn
{\rm III} 
& \le& \E_0 \exp\left(2 \theta W_{S_1 \cap S_2} - 2 \Lambda(\theta) K^{(2)} \right) = \exp\left(\Delta K^{(2)}\right), 
\eeqn
where
\beq \label{Delta}
\Delta := \Lambda(2 \theta) - 2 \Lambda(\theta) = \log\left(1 + \frac{(p_1 -p_0)^2}{p_0 (1 -p_0)}\right).
\eeq

When $K > \kmin$, we use a different bound.  For any $\xi \in (0, 2 \theta)$, we have
\beqn
{\rm III} 
&\le& \E_0 \left[\exp\left(\xi W_{S_1 \cap S_2} + (2\theta -\xi) w_K - 2 \Lambda(\theta) K^{(2)} \right) \{W_{S_1 \cap S_2} \le w_K\}\right] \\
&\le& \E_0 \exp\left[\xi W_{S_1 \cap S_2} + (2\theta -\xi) w_K - 2 \Lambda(\theta) K^{(2)} \right]\ ,
\eeqn
 so that
\[ 
{\rm III} \le \exp\left(\Delta_K K^{(2)}\right),
\] 
where
\beq \label{Deltak}
\Delta_k := \min_{\xi \in [0, 2 \theta]} \Lambda(\xi) + (2\theta -\xi) q_k - 2\Lambda(\theta)\ .
\eeq
By the variational definition of the entropy \eqref{HL}, the minimum of $\Lambda(\xi) + (2\theta -\xi) q_k - 2\Lambda(\theta)$ over $\xi$ in $\mathbb{R}^+$ is achieved at $\xi=\theta_{q_k}$, and we know from Lemma \ref{lem:qk} that $\theta_{q_k}\leq 2\theta$. Hence, we have
\begin{eqnarray}
\Delta_k&=& - H(q_k)+2\theta q_k-2\Lambda(\theta)\nonumber \\
& = & -2H_{p_1}(q_k)+ H(q_k)\ ,\label{eq:delta_k}
\end{eqnarray}

Following our tracks, we have
\[
\E_0 \Lt^2 \le \E \left[\IND{K \leq  \kmin} \exp\left(\Delta K^{(2)}\right)\right] + \E \left[\IND{K > \kmin}\exp\left(\Delta_K K^{(2)}\right)\right],
\]
where the expectation is with respect to $\pi^{\otimes 2}$.

Let $b$ be an integer sequence such that $b \to \infty$ so slowly that 
\beq \label{b}
\frac{(p_1 - p_0)}{\sqrt{p_0}} \, \frac{b n^2}{N} \to 0,
\eeq
which is possible because of \eqref{lower1}.
Recall that $\rho = n/(N-n)$ and define $k_0 = \lceil b n \rho \rceil$.  We divide the expectation into two parts: $K \le k_0$ and $k_0 + 1 \le K \le n$.  When $k_0 = 1$, we simply have
\[
\E\left[ \IND{K \le k_0} \exp\left(\Delta K^{(2)}\right)\right] = \P(K \le 1) \le 1\ .
\]
When $k_0 \ge 2$, we use the expression \eqref{Delta} of $\Delta$ to derive
\beqn
\E \left[\IND{K \le k_0} \exp\left(\Delta K^{(2)}\right)\right] &\leq  &\exp\left[\Delta k_0^2\right]\\
&\leq& \exp\left[O(1)\frac{(p_1-p_0)^2}{p_0(1-p_0)}\frac{b^2 n^2}{N^2}\right] = 1+o(1)
\eeqn
because of \eqref{b}.

When $k_0 + 1 \le K \leq \lfloor \kmin\rfloor $, we use the bound \eqref{Kbound} and the identity $(1-x)\log(1-x)\geq -x$, to get
\beqn
\E \left[\IND{k_0 + 1 \le K \le \lfloor \kmin\rfloor} \exp\left(\Delta K^{(2)}\right) \right]
&\le &\sum_{k = k_0 + 1}^{\lfloor \kmin\rfloor} \exp\left[\Delta \frac{k(k-1)}2 - nH_{\rho}\left(\frac{k}{n}\right)\right]\\
&\leq & \sum_{k = k_0 + 1}^{\lfloor \kmin\rfloor} \exp\left[k\left(\Delta\frac{k-1}{2}-\log\left(\frac{k}{n\rho}\right)+1\right)\right]
\eeqn
For $a > 0$ fixed, the function $f(x) = a x - \log x$ is decreasing on $(0, 1/a)$ and increasing on $(1/a, \infty)$.  Therefore, for $k_0 + 1 \le k \le n$,
\[
\Delta \frac{k-1}2 - \log\left(\frac{k}{n\rho}\right) \le -\omega\ ,
\]
where
\[
\omega := \min \left[\log b - \Delta \frac{k_0-1}2, \ \log\left(\frac{\kmin}{n\rho}\right) - \Delta \frac{\kmin-1}{2}\right]\ .
\]
From what we did previously, we know that $\Delta (k_0-1) = o(1)$, so that the first term in the maximum tends to $\infty$.  Therefore, it suffices to look at the second term in the maximum.  In fact, $k_{\min}$ has been precisely defined in \eqref{eq:def_kmin} to make this second term diverge. Indeed, by \eqref{eq:def_kmin} and \eqref{Delta}, we have
\[
\Delta \frac{\kmin-1}{2}\leq \log\left(\frac{Nk^*}{n^2}\right)-\log\log\left[\frac{n}{\log(N/n)}\right]\ . 
\]
By Lemma \ref{lem:kmin} and since $\rho \asymp n/N=o(1)$, we get $\log(k_{\min}/(n\rho))- \log\left(\frac{Nk^*}{n^2}\right)=o(1)$. Consequently, 
\[
\log\left(\frac{\kmin}{n\rho}\right) - \Delta \frac{\kmin-1}2 \ge  \log\log\left[\frac{n}{\log(N/n)}\right]+o(1) \to \infty \ ,
\]
because of \eqref{n-log}.

When $K > \kmin$, we have 
\[
\E \left[\IND{K > \kmin} \exp\left(\Delta_K K^{(2)}\right) \right]
\le \sum_{k = \lfloor \kmin\rfloor +1}^{n} \exp\left[k\left(\Delta_k \frac{k-1}2 - \log\left(\frac{k}{n\rho}\right)+1\right)\right].
\]
Now, using \eqref{eq:delta_k},  we have
\[
\Delta_k \frac{k-1}2 - \log\left(\frac{k}{n\rho}\right)= \frac{k-1}{2}\left[-2H_{p_1}(q_k)+H(q_k)\right]- \log\left(\frac{N}{k}\right)+ 2\log\left(\frac{n}{k}\right)+o(1)\ ,
\]
which goes to $-\infty$ uniformly over all $k$ between $\lfloor k_{\min}\rfloor +1$ and $n$ by the definition \eqref{qk} of $q_k$ and by the control of $H_{p_1}(q_k)$ from Lemma \ref{lem:ent_p1_qk}. Hence,  the sum above tends to zero.  

This concludes the proof that  $\E_0 \Lt^2 \le 1 + o(1)$.

\subsubsection{Proof of Lemma \ref{lem:kmin}}\label{sec:proof_kmin}
We only need to prove that $k_*\rightarrow \infty$ and that $\log(n/k_*)=o\left[\log(N/n)\right]$ since \[\log\left\{\log\left(\frac{n}{\log(N/n)}\right)\wedge \log(N/n)\right\}=o(\log(N/n))\ .\]

We divide the analysis into three cases depending on the behaviour of $p_1/p_0$.

\medskip \noindent {\bf CASE 1:} $p_1/p_0\rightarrow 1$. 
Then, Lemma \ref{lem:H} tells us  $\log\left(1+\frac{(p_1-p_0)^2}{p_0(1-p_0)}\right)\sim 2H(p_1)$, so that 
\[k^*\succ \frac{\log(N/n)}{H(p_1)}\wedge n\succ n\ , \]
since $H(p_1)<2(1-\eta_0)\log(N/n)/n$ by \eqref{lower-2}. 
Hence $k^*\to \infty$ and $\log(n/k^*)=O(1)$. 

\medskip \noindent {\bf CASE 2:} $p_1/p_0\to r$ with $r\in (1,\infty)$. 
Since $H(p_1)$ goes to $0$, this  enforces $p_0\to 0$. Using Lemma~\ref{lem:H} and \eqref{lower-2}, we derive that 
\[p_0\left[r\log(r)-r+1\right] \prec \log(N/n)/n\ .\]
 Hence, $\log(N/n)/p_0 \succ n$. 
Going back to the definition of $k_*$, we derive that
\[k^*\succ \left[1+ \frac{\log(N/n)}{p_0(r-1)^2}\right]\wedge n \succ n\ .\]

\medskip \noindent {\bf CASE 3:} $p_1/p_0\to \infty.$ 
Again, we have $p_0\to 0$. By Lemma \ref{lem:H} and \eqref{lower-2}, 
\begin{equation}\label{eq:upper_bound_p1}
 p_1\log\left(\frac{p_1}{p_0}\right)\prec\frac{\log(N/n)}{n}\ .
\end{equation}
 Hence, 
\[\log\left(\frac{p_1}{p_0}\right) \prec \log\left[\log(N/n)/(np_0)\right] = o[\log(N/n)],\]
where the last part comes from  \eqref{n-p0}.
Hence,
\[k^*\succ \frac{\log(N/n)}{\log(p_1/p_0)} \to \infty\ .\]
Since \eqref{eq:upper_bound_p1} also implies that $p_1\prec \log(N/n)/n$, we have 
\begin{eqnarray*}
 \frac{n}{k^*}\prec \frac{n\log(1+p_1^2/p_0)}{\log(N/n)}\vee 1 \prec\frac{np_1^2}{\log(N/n)p_0}\vee 1\prec \frac{\log(N/n)}{np_0}\vee 1\ ,
\end{eqnarray*}
so that $\log(n/k^*)\leq  \log\left[\log(N/n)/(np_0)\right]\vee 0 + O(1)  = o[\log(N/n)]$ by \eqref{n-p0}.

\subsubsection{Proof of Lemma \ref{lem:qk}}\label{sec:proof_qk}
\def\qt{\widetilde{q}}
Define $\qt$ by the equation 
\begin{eqnarray}\label{eq:def_qt}
\frac{\qt}{1-\qt}= \frac{p_1^2(1-p_0)}{p_0(1-p_1)^2}\ ,
\end{eqnarray}
which implies $\theta_{\qt}=2\theta$. 
Because $H$ is strictly increasing and continuous on $(p_0, \qt)$, to prove the existence of $q_k$ it suffices to show that
\[\frac{\kmin-1}{2}H(\qt)\geq \log(N/\kmin)+2\ .\]

As in the proof of the previous lemma, we consider different cases depending on the convergence of $p_1/p_0$  and of $p_1^2/p_0$. 
In all cases, except the last one, we show that 
\[k_* H(\qt) \ge 2 (1+\eps) \log(N/n),\]
for some fixed $\eps > 0$, which suffices by \lemref{kmin}.
If $k_* < n$, so that $k_* \ge \frac2\Delta \log(N/n)$ (with $\Delta$ defined in \eqref{Delta}). If $k^*=n$ and $k_{\min}<n$, we have $k_* \ge \frac2\Delta \log(N/n)(1+o(1))$. Hence, it is enough the prove that 
\[H(\qt) \ge (1+\eps) \Delta, \quad \text{ for some fixed $\eps > 0$}.\]

The last case, Case 3(c) below --- which corresponds to $p_0=o(\log(N/n)/n)$ and $\log(n)=o(\log(N))$ --- requires a more delicate treatment.

\medskip \noindent {\bf CASE 1:} $p_1/p_0\rightarrow 1.$ 
By the definition of $\qt$, we have $\qt-p_0= (p_1-p_0)\left[1+\frac{p_1(1-p_1)}{p_0-2p_0p_1+p_1^2}\right]\sim 2(p_1-p_0)$ 
and Lemma \ref{lem:H} tells us that 
\[H(\qt)\sim \frac{2(p_1-p_0)^2}{p_0(1-p_0)} \ge 2 \Delta\ .\]   

\medskip \noindent {\bf CASE 2:} $p_1/p_0\to r$ with $r\in (1,\infty).$ 
Note that this forces $p_1\rightarrow 0$. 
Here \eqref{eq:def_qt} implies that $\qt/p_0 \sim (p_1/p_0)^2$, so that $H(\qt)\sim p_0\left(r^2\log(r^2)-r^2+1\right)$ by \lemref{H}.
At the same time, $\Delta \sim p_0 (r - 1)^2$, so that
\[\frac{H(\qt)}{\Delta} \sim \frac{r^2\log(r^2)-r^2+1}{(r-1)^2} = 1 + \frac{2r\left(r\log(r)-r+1\right)}{(r-1)^2} > 1\ .\]

\medskip \noindent {\bf CASE 3(a):} $p_1/p_0\to \infty$ and $p_1^2/p_0 \to 0.$ 
We have $\qt/p_0 \sim (p_1/p_0)^2 \to \infty$, implying that $H(\qt)\sim \qt \log(\qt/p_0) \sim 2 (p_1^2/p_0) \log(p_1/p_0)$ by \lemref{H}.  Also, $\Delta \sim \log(1+p_1^2/p_0)\sim \frac{p_1^2}{p_0}$.  Hence, $H(\qt) \gg \Delta$.

\bigskip \noindent {\bf CASE 3(b):} $p_1/p_0\to \infty$ and $p_1^2/p_0\to r_2\in (0,\infty).$  
Here $\qt \to 1/(1+r_2)$, so that $\qt/p_0 \to \infty$, implying that $H(\qt) \sim \qt \log(\qt/p_0) \asymp \log(1/p_0) \to \infty$.  Also, $\Delta \to \log(1+r_2)$.  Hence, $H(\qt) \gg \Delta$.

\bigskip\noindent {\bf CASE 3(c):} $p_1^2/p_0\to \infty$. By Definition \eqref{eq:def_k*} of $k_*$, this implies $k_*<n$.
 By definition of $\qt$, we have $\qt = 1 - o(1)$, so that $H(\qt) \sim \log(1/p_0)$.  On the other hand, $\Delta \sim \log(p_1^2/p_0)$.  Therefore, 
\[\frac{H(\qt)}{\Delta} \sim \frac{\log(1/p_0)}{\log(p_1^2/p_0)} = \frac1{1 - \frac{\log (p_1^2)}{\log(p_0)}},\]
so that we are done if $\log(p_1)/\log(p_0)$ is bounded away from 0. 
When $\log(p_1)/\log(p_0) = o(1)$, we need to work a little harder and perform a second order analysis.  From the definition of $\qt$, we derive $1-\qt\leq \frac{p_0}{p_1^2}$, so that 
\[H(\qt) \ge H(1 - \frac{p_0}{p_1^2}) = (1-\frac{p_0}{p_1^2}) \log(\frac{1-\frac{p_0}{p_1^2}}{p_0}) + \frac{p_0}{p_1^2} \log(\frac{\frac{p_0}{p_1^2}}{1-p_0}) = (1-\frac{p_0}{p_1^2}) \log(\frac1{p_0}) + o(1).\] 
Hence,
\begin{eqnarray*}
\frac{H(\qt)}{\Delta} -1 
&\ge& \frac{\log(1/p_1^2) -\frac{p_0}{p_1^2}\log(1/p_0)-o(1)}{ \log\left(\frac{p_1^2}{p_0}\right)+o(1)}\\
&\ge& \frac{2 \log(1/p_1)}{\log(1/p_0)}\left(\frac{1 - \frac{p_0 \log(p_0)}{p_1^2 \log(p_1^2)} + o(1)}{1 - \frac{2\log(p_1)}{\log(p_0)} + o(1)} \right) \\
&=& (2 + o(1))\frac{\log(1/p_1)}{\log(1/p_0)} \ .
\end{eqnarray*}
since $p_1^2/p_0 \to \infty$.
We use this lower bound to get
\begin{eqnarray*}
\frac{\kmin-1}{2} H(\qt)
&\ge&  \big[\log\left(N/k_*\right)- 2\log\left(n/k_*\right)-\log\log(n/\log(N/n))\big] \frac{H(\qt)}{\Delta}\\
&\geq & \big[\log(N/\kmin)+2\big] \times \left[1 - \frac{2 + o(1) + 2\log(n/k_*) + \log\log(n/\log(N/n)) }{\log(N/n)} \right]\\
&& \qquad \times \left[1+ (2+o(1))\frac{\log(1/p_1)}{\log(1/p_0)}\right]\ ,
\end{eqnarray*}
where we used \lemref{kmin} in the second inequality.
In order to conclude, because of \eqref{n-log},
it suffices to show that
\beq\label{eq:case3-c}
\frac{\log(n/k_*) + \log\log(n/\log(N/n))}{\log(N/n)} \ll \frac{\log(1/p_1)}{\log(1/p_0)}\ .
\eeq
The bound \eqref{eq:upper_bound_p1}, coupled with $p_1 \gg \sqrt{p_0}$, implies that $2\log\log(N/n)-\log(n)+\log(1/(np_0))\to \infty$. This, together with \eqref{n-p0}, forces $\log(n)= o\left[\log(N/n)\right]$.
Hence,
\[\frac{\log(1/p_0)}{\log(N/n)} = \frac{\log(n)+\log(1/(np_0))}{\log(N/n)}= o(1)\ .\]
It remains to show that 
\[\frac{\log(n/k_{*}) + \log\log(n/\log(N/n))}{\log(1/p_1)} = O(1).\]
By definition of $k_*$
\[\log(n/k_{*}) \le \log(n/\log(N/n)) + \log(\Delta) \le \log(n/\log(N/n)) + \log \log(p_1^2/p_0),\]
so that, because of \eqref{n-log} and \eqref{eq:upper_bound_p1}, we have  
\beqn
\frac{\log(n/k_{*}) + \log\log(n/\log(N/n))}{\log(1/p_1)} 
&\prec& \frac{\log(n/\log(N/n)) + \log \log(p_1^2/p_0) + \log \log(n/\log(N/n))}{\log(n/\log(N/n)) + \log \log(p_1/p_0)} \\
&=& O(1)\ .
\eeqn

\subsubsection{Proof of Lemma \ref{lem:ent_p1_qk}}\label{sec:proof_ent_p1_qk}

We first note that, by the entropy bound \eqref{lower-2} involving $p_1$, the definition of $q_k$ \lemref{qk}, definition of $\tilde{q}$ in \eqref{eq:def_qt}, and the fact that $H(q)$ is strictly increasing over $q > p_0$, we have
\beq \label{p1-qk-qt}
p_1 \le q_k \le \tilde q, \quad \forall k \le n \ .
\eeq

\noindent {\bf CASE 1:} $p_1/p_0\to 1$. In the proof of Lemma \ref{lem:qk} (Case 1), we have shown that $\tilde{q}$ defined in \eqref{eq:def_qt} satisfies $\tilde{q}\sim p_0$. 
By \eqref{p1-qk-qt}, we then get $q_k\sim p_0\sim p_1$. Then using \lemref{H} and the bound on the entropy \eqref{lower-2}, we get
\beq \label{H3}
\frac{(q_k - p_0)^2}{(p_1 - p_0)^2}\sim\frac{H(q_k)}{H(p_1)} \geq  \frac{n}{(1 -\eta_0)k}  \ge \frac{1}{1-\eta_0}\ .
\eeq
Hence, we may lower bound  $H_{p_1}(q_k)$as follows:
\begin{eqnarray*}
 H_{p_1}(q_k)\sim \frac{(q_k-p_1)^2}{2p_1(1-p_1)}\sim \frac{(q_k-p_0)^2}{2p_0(1-p_0)}\left(1-\frac{p_1-p_0}{q_k-p_0}\right)^2\succ H(q_k)[1-\sqrt{1-\eta_0}]^2\ ,
\end{eqnarray*}
which allows us to conclude that
\begin{eqnarray*}
 \frac{(k-1)}{2}H_{p_1}(q_k)\succ \frac{(k-1)}{2}H(q_k)\succ \log(N/n)\gg \log\left(\frac{n}{k}\right)\vee 1\ ,
\end{eqnarray*}
where the last inequality follows from Lemma \ref{lem:kmin} and the fact that $k \ge \kmin$.  

\medskip \noindent 
{\bf CASE 2:} $p_1/p_0\to r\in (1,\infty)$. As in the proof of \lemref{qk} (Case 2), we have $p_1 \to 0$. In the proof of Lemma \ref{lem:qk} (Case 1), we have shown that  $\tilde{q}/p_0\rightarrow r^2$ and that $\tilde{q} \rightarrow 0$. 
By \eqref{p1-qk-qt}, we can use the second asymptotic expression of the entropies  in Lemma \ref{lem:H}.
The inequalities in \eqref{H3} still hold, giving
\begin{eqnarray}\label{eq_lower_H1_2}
\frac{1}{1-\eta_0} \leq\frac{H(q_k)}{H(p_1)}\sim\frac{\frac{q_k}{p_0}\log\left(\frac{q_k}{p_0}\right)-\frac{q_k}{p_0}+1}{r\log(r)-r+1} = \frac{f(q_k/p_0)}{f(r)}\ ,
\end{eqnarray}
where $f(x) := x\log(x)-x+1$.  Since $f$ is convex and satisfies $f'(x)=\log(x)$, we have $f(x)-f(r)\leq (x-r)\log(x)$ for $x \ge r \ge 1$. Taking $x=q_k/p_0$ and using \eqref{eq_lower_H1_2}, we derive that 
\[\log\left(\frac{q_k}{p_0}\right)\left(\frac{q_k}{p_0}-r\right)\geq f(r) \left(\frac{f(q_k/p_0)}{f(r)} -  1\right) \ge \frac{f(r)\eta_0}{1-\eta_0} (1+o(1)) \ge f(r) \eta_0\ , \]
eventually.
As a consequence, $q_k/p_0$ is also lower bounded away from $r$. Thus,  $\log(q_k/p_1)/\log(q_k/p_0)$ is bounded away from $0$ by a constant that only depends on $r$ and $\eta_0$.  We then derive,
\begin{equation}\label{eq:upper_bound_fr}
\log\left(\frac{q_k}{p_1}\right)\left(\frac{q_k}{p_1}-1\right)\succ \log\left(\frac{q_k}{p_0}\right)\left(\frac{q_k}{p_0}-r\right)\ .
 \end{equation}
Now, for the entropy $H_{p_1}(q_k)$, by \lemref{H} we have 
\beqn
H_{p_1}(q_k)&\succ& \frac{(q_k - p_1)^2}{p_1} \wedge q_k\log\left(\frac{q_k}{p_1}\right) \\
&=& p_1\left[\left(\frac{q_k}{p_1}-1\right)^2\wedge \frac{q_k}{p_1}\log\left(\frac{q_k}{p_1}\right)\right]\geq p_1\left(\frac{q_k}{p_1}-1\right) \log\left(\frac{q_k}{p_1}\right)
\eeqn
as $\log(1+x)\leq x$. Since $H(p_1)\sim p_0f(r)$, we get by \eqref{eq_lower_H1_2} and \eqref{eq:upper_bound_fr}
\beqn
H_{p_1}(q_k)
&\succ& \frac{rH(p_1)}{f(r)}\left(\frac{q_k}{p_1}-1\right) \log\left(\frac{q_k}{p_1}\right) \\
&\succ& \frac{H(q_k)}{f(q_k/p_0)} \left(\frac{q_k}{p_0}-r\right) \log\left(\frac{q_k}{p_0}\right) \\
&\succ & H(q_k) \\
&\succ& \frac1k \log(N/n)\ ,
\eeqn
where the third line follows from the fact that the $q_k/p_0$ is lower bounded away from $r$ and that $f(x)\sim x\log(x)$ when $x\rightarrow \infty$.
Thus, 
\[ \frac{k-1}{2}H_{p_1}(q_k) \succ \log(N/n) \gg \log(n/k)\vee 1\ ,
\]
as before.

\medskip \noindent 
{\bf CASE 3:} $p_1/p_0\rightarrow \infty$. As in the proof of \lemref{qk} (Case 2), we have $p_1 \to 0$. We start as in the two previous cases, again using \lemref{H} to get the asymptotic expressions of the entropies. By \eqref{p1-qk-qt}, $q_k/p_0 \geq p_1/p_0 \to \infty$, so that
\beq \label{ent_p1_qk_case3_1}
\frac{n}{(1-\eta_0)(k-1)}\leq\frac{H(q_k)}{H(p_1)}\sim  \frac{q_k}{p_1}\frac{\log(q_k/p_0)}{\log(p_1/p_0)}\sim  \frac{q_k}{p_1}\left[1+ \frac{\log(q_k/p_1)}{\log(p_1/p_0)}\right]
\eeq
It follows that $\frac{q_k}{p_1}(1+  \frac{\log(q_k/p_1)}{\log(p_1/p_0)})\geq (1-\eta_0)^{-1}$. Since $\log(p_1/p_0)\rightarrow \infty$, we derive that $q_k/p_1\geq (1-\eta_0/2)^{-1}$ for $n$ large enough.
Since $p_1\leq q_k\leq \tilde{q}$, we have $q_k/p_1 \le \tilde q/p_1 \le p_1/p_0$.
It follows that $\log(q_k/p_1)/\log(p_1/p_0) \le 1$, and therefore $q_k/p_1 \ge (1+o(1)) \frac{n}{2k}$ by \eqref{ent_p1_qk_case3_1}. 
We conclude that
\beq \label{ent_p1_qk_case3_2}
\frac{q_k}{p_1} \ge \left[\frac{n}{2k}\vee \frac{1}{1-\eta_0/2}\right] (1+o(1))\ .
\eeq

Turning to the entropy $H_{p_1}(q_k)$, we have $H_{p_1}(q_k)\geq q_k\log(q_k/p_1)-q_k+(1-q_k)p_1$. Using
\lemref{qk} and \lemref{H}, we get
\begin{eqnarray*}
 \frac{k-1}{2}H_{p_1}(q_k) \geq \frac{H_{p_1}(q_k)}{H_{p_0}(q_k)}\log\left(\frac{N}{k}\right)\geq \frac{\log\left(\frac{q_k}{p_1}\right)-1+\frac{p_1}{q_k} -p_1}{\log\left(\frac{q_k}{p_0}\right)}\log\left(\frac{N}{n}\right)(1+o(1))\ .
\end{eqnarray*}
We explaing above that $q_k/p_1 \le \tilde q/p_1 \le p_1/p_0$, so that $q_k/p_0 \le (p_1/p_0)^2$, implying $\log(q_k/p_0)\leq 2\log(p_1/p_0)$.
Applying \eqref{ent_p1_qk_case3_2}, we get
\begin{eqnarray*}
 \frac{k-1}{2}H_{p_1}(q_k)\succ \frac{\log(N/n)}{\log(p_1/p_0)}\left[\log(n/k)\vee 1\right]\ ,
\end{eqnarray*}
We saw in the proof of Lemma \ref{lem:kmin} (Case 3) that $\log(p_1/p_0)=o[\log(N/n)]$, so  we conclude that 
\[\frac{k-1}{2}H_{p_1}(q_k)\gg \log(n/k)\vee 1\ .\]

\subsection{Proof of Theorem \ref{thm:lower_unknownp_0}}
\label{sec:lower-proof_unknown}

We start with a couple of lemmas.

\begin{lem}\label{lem:entropy_p0}
Under conditions \eqref{n-p0_unknown}, \eqref{lower1_eq_unknown} and \eqref{lower2_eq_unknown}, we have 
\beq \label{eq:hypo_p0_tildep0}
\limsup\frac{n H_{p_0}(p_1)}{2 \log (N/n)} < 1\ , \quad \quad\quad \quad  \frac{(p_1-p_0)^2}{p_0}\frac{n^3}{N^{3/2}}\rightarrow 0\ . 
\eeq
\end{lem}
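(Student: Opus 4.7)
The plan is to exploit the defining identity
$$p_0 \;=\; p'_0 + (p_1-p'_0)\,\frac{n^{(2)}}{N^{(2)}},$$
which expresses $p_0$ as a convex combination of $p'_0$ and $p_1$. Since $p_1>p'_0$ and $n^{(2)}/N^{(2)}\in(0,1)$, this immediately gives $p'_0\le p_0\le p_1$ together with $0\le p_1-p_0\le p_1-p'_0$. Both conclusions of the lemma will then follow by comparing the statement involving $p_0$ to the corresponding statement involving $p'_0$, for which the hypotheses \eqref{lower1_eq_unknown} and \eqref{lower2_eq_unknown} are available.

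For the second conclusion, I would note that $(p_1-p_0)^2\le (p_1-p'_0)^2$ and $1/p_0\le 1/p'_0$, so
$$\frac{(p_1-p_0)^2}{p_0}\,\frac{n^3}{N^{3/2}} \;\le\; \frac{(p_1-p'_0)^2}{p'_0}\,\frac{n^3}{N^{3/2}}\, ,$$
and the right-hand side is precisely the square of the quantity in \eqref{lower1_eq_unknown}, which tends to zero by assumption.

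For the first conclusion, the plan is to use monotonicity of $q\mapsto H_q(p_1)$ on $(0,p_1]$. A direct differentiation of the definition \eqref{H} yields
$$\frac{\partial}{\partial q}H_q(p_1) \;=\; -\frac{p_1}{q}+\frac{1-p_1}{1-q} \;=\; \frac{q-p_1}{q(1-q)},$$
which is nonpositive for $q\le p_1$. Hence $q\mapsto H_q(p_1)$ is nonincreasing on $(0,p_1]$, and since $p'_0\le p_0\le p_1$ we obtain $H_{p_0}(p_1)\le H_{p'_0}(p_1)$. Inserting this into \eqref{lower2_eq_unknown} gives $\limsup n H_{p_0}(p_1)/(2\log(N/n))<1$.

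There is no real obstacle here; the lemma is a short bookkeeping step whose purpose is to let subsequent arguments operate with $p_0$ (the expectation-matched null probability) in place of the design parameter $p'_0$. The only thing to notice is that both quantities one wants to control are monotone in the right direction as $q$ moves from $p'_0$ up to $p_0$.
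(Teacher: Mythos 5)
Your proof is correct and takes a genuinely different, more direct route than the paper's. For the first conclusion, the paper explicitly computes $H_{p'_0}(p_1)-H_{p_0}(p_1) = p_1\log(p_0/p'_0)+(1-p_1)\log\big(\frac{1-p_0}{1-p'_0}\big)$, bounds it above by $\frac{n^{(2)}}{N^{(2)}}\frac{(p_1-p'_0)^2}{p'_0(1-p'_0)}$ via $\log(1+x)\le x$, and then invokes the Lemma-\ref{lemma:n3}-style estimate to conclude the difference is $o(\log(N/n)/n)$. Strictly speaking, an upper bound on $H_{p'_0}(p_1)-H_{p_0}(p_1)$ only gives a \emph{lower} bound on $H_{p_0}(p_1)$; what makes the paper's argument work is the unstated fact that the difference is nonnegative. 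Your monotonicity observation --- that $p_0$ is a convex combination of $p'_0$ and $p_1$, so $p'_0 \le p_0 \le p_1$, and that $q \mapsto H_q(p_1)$ is nonincreasing on $(0,p_1]$ since $\partial_q H_q(p_1)=(q-p_1)/(q(1-q))\le 0$ --- makes that sign transparent and in fact yields $H_{p_0}(p_1)\le H_{p'_0}(p_1)$ directly, so the quantitative bound on the difference becomes unnecessary for this lemma (it does, however, illustrate the remark following \thmref{lower_unknownp_0} that the entropy conditions with $p_0$ and $p'_0$ are asymptotically equivalent). For the second conclusion your argument coincides with the paper's: each factor in $\frac{(p_1-p_0)^2}{p_0}\frac{n^3}{N^{3/2}}$ is dominated by the corresponding factor with $p'_0$ in place of $p_0$, and the resulting quantity is the square of \eqref{lower1_eq_unknown}.
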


As in the proof of \thmref{lower}, for $n$ large enough, we may assume that there exists $\eta_0>0$ such that 
\beq\label{eq:condition_entrop_unknown} 
\frac{n H_{p_0}(p_1)}{2 \log (N/n)} = 1-\eta_0\ .
\eeq

\begin{lem}\label{lemma:n3}
Under conditions \eqref{n-p0_unknown} and (\ref{eq:condition_entrop_unknown}), we have 
\[
 \frac{n^2}{N}\frac{(p_1-p_0)^2}{p_0}=o(1)\ .
\]
\end{lem}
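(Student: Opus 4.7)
The plan is to combine the entropy condition \eqref{eq:condition_entrop_unknown}, which rewritten says $nH_{p_0}(p_1) \asymp \log(N/n)$, with the three asymptotic regimes for $H_{p_0}(\cdot)$ given in Lemma~\ref{lem:H}. I would pass to a subsequence along which $p_1/p_0$ converges in $[1,\infty]$ and verify the conclusion in each resulting regime. Note first that since $p_0 = p'_0 + (p_1-p'_0)\nn/\NN \geq p'_0$, condition \eqref{n-p0_unknown} also holds with $p_0$ replacing $p'_0$, a fact that will be needed in the hardest case.

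\textbf{Cases $p_1/p_0\to 1$ and $p_1/p_0\to r\in(1,\infty)$.} In the first case, Lemma~\ref{lem:H} yields $H_{p_0}(p_1)\sim(p_1-p_0)^2/[2p_0(1-p_0)]$, so $(p_1-p_0)^2/p_0 = O(H_{p_0}(p_1))$. In the second case, if $p_0\to 0$ then Lemma~\ref{lem:H} gives $H_{p_0}(p_1)\sim p_0(r\log r-r+1)$ while $(p_1-p_0)^2/p_0\sim p_0(r-1)^2$, yielding the same $O$-bound; if $p_0$ stays bounded away from $0$ instead, both quantities are bounded positive constants and the bound is trivial. In every such sub-case, \eqref{eq:condition_entrop_unknown} implies $n(p_1-p_0)^2/p_0 = O(\log(N/n))$, and therefore
\[
\frac{n^2(p_1-p_0)^2}{Np_0} = O\!\Big(\frac{n}{N}\log(N/n)\Big) = o(1),
\]
because $x\log(1/x)\to 0$ as $x\to 0^+$ and $n/N\to 0$.

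\textbf{Case $p_1/p_0\to\infty$.} Here $p_0\to 0$ and $\log(p_1/p_0)\to\infty$, and the third regime of Lemma~\ref{lem:H} gives $H_{p_0}(p_1)\sim p_1\log(p_1/p_0)$. From \eqref{eq:condition_entrop_unknown}, $np_1\asymp \log(N/n)/\log(p_1/p_0) = o(\log(N/n))$. Since $(p_1-p_0)^2\sim p_1^2$,
\[
\frac{n^2(p_1-p_0)^2}{Np_0}\sim\frac{(np_1)^2}{Np_0} = O\!\Big(\frac{(\log(N/n))^2}{Np_0}\Big).
\]
Condition \eqref{n-p0_unknown} (for $p_0$) provides, for any $\delta\in(0,1)$ and $n$ large enough, $np_0\geq (n/N)^\delta$, i.e., $Np_0\geq (N/n)^{1-\delta}$. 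Since $(\log(N/n))^2 = o((N/n)^{1-\delta})$, the display tends to zero, concluding this case. The main obstacle is precisely this regime: the convenient bound $(p_1-p_0)^2/p_0 \lesssim H_{p_0}(p_1)$ used in the earlier cases fails here by a factor $\asymp (p_1/p_0)/\log(p_1/p_0)\to\infty$, so the argument must instead combine the explicit large-deviation form of $H_{p_0}(p_1)$ with the polynomial lower bound $Np_0\succ (N/n)^{1-\delta}$ extracted from \eqref{n-p0_unknown}.
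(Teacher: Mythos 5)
Your proof is correct and follows the same route as the paper's: a trichotomy on the limit of $p_1/p_0$, each branch handled via the corresponding asymptotics in Lemma~\ref{lem:H}, combined with the entropy normalization $nH_{p_0}(p_1)\asymp\log(N/n)$. Cases~1 and~2 are essentially identical to the paper's. In Case~3 the paper first derives $p_1/p_0=o(\sqrt{N/n})$ (by dividing $p_1\log(p_1/p_0)\prec\log(N/n)/n$ by $p_0$, taking logarithms, and invoking \eqref{n-p0_unknown}) before combining it with $np_1=o(\log(N/n))$; you skip that detour and instead lower-bound $Np_0\geq(N/n)^{1-\delta}$ directly from \eqref{n-p0_unknown}. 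The two assemblies are algebraically equivalent, since $(p_1/p_0)(n/N)=(np_1)/(Np_0)$, so this is a cosmetic rather than a substantive difference. Your explicit remark that \eqref{n-p0_unknown}, stated for $p'_0$, transfers to $p_0$ because $p_0\geq p'_0$ makes explicit a step the paper uses silently and is a worthwhile clarification.
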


We consider the likelihood ratio under the uniform prior:
\beq \label{L_unknown}
L' = {N \choose n}^{-1} \sum_{|S| = n} L'_S = \pi\big[ L'_S \big], 
\eeq
and 
\beq \label{Ldef_unknown}
L'_S := \exp\left[\theta_{p_1} W_S - \Lambda(\theta_{p_1}) \nn + \theta_{p'_0}(W-W_S) - (N^{(2)}-n^{(2)})\Lambda(\theta_{p'_0})\right]\ .
\eeq

As in the proof of \thmref{lower}, we use a thresholded version of $L'$ to prove that $\E_0[|L'-1|]=o(1)$:
\[
\Lt := {N \choose n}^{-1} \sum_{|S| = n} L'_S \1_{\Gamma_S}\ ,
\]
where $\Gamma_S$ is defined in \eqref{GammaS1}.
As in the proof of Theorem \ref{thm:lower}, we prove that any subsequence of $\E_0[\Lt-1]$ has $0$ as an accumulation point. This allows us to assume that $p_1/p_0$ converges to $r\in [1,\infty]$ and that $p^2_1/p_0$ converges to $r_2\in [0,\infty]$. To control $\E_0[\Lt-1]$, it suffices to prove that $\E_0 \Lt=1+o(1)$ and that $\E_0[\Lt^2]\leq 1+o(1)$.

\medskip
\noindent {\bf First moment}
\[
\E_0 \Lt = \pi \big[\E_0 [L'_S \1_{\Gamma_S}] \big] = \pi\big[\P'_S(\Gamma_S)\big] = \P'_S(\Gamma_S)\ .
\]
As the proof of \thmref{lower}, we can show that $\P'_S(\Gamma_S)=1+o(1)$ relying only on \eqref{lower2_eq_unknown}.

\medskip
\noindent {\bf Second Moment}.
It remains to prove that $\E_0[\Lt^2]\leq 1+o(1)$. Let $S_1, S_2 \iid \pi$ and define $K = |S_1 \cap S_2|$.  Observe that ($W_{S_1\cap S_2}$, $W_{S_1}+W_{S_2}-2W_{S_1\cap S_2}$, $W-W_{S_1}-W_{S_2}+W_{S_1\cap S_2}$) are independent.
Arguing as in the proof of \thmref{lower}, we decompose the square of the modified likelihood as follows. 
\beqn
\E_0 \Lt^2 
&=& \pi^{\otimes 2}\big[ \E_0 \big(L'_{S_1} L'_{S_2} \1_{\Gamma_{S_1}} \1_{\Gamma_{S_2}} \big) \big] \\
&\leq & \pi^{\otimes 2}\big[ {\rm I} \cdot {\rm II} \cdot {\rm III} \big] \, 
\eeqn
where
\beqn
{\rm I} &:=& \E_0 \exp\left[2\theta_{p'_0}(W-W_{S_1}-W_{S_2}+W_{S_1\cap S_2}) - 2\Lambda(\theta_{p'_0})\left(N^{(2)}-2n^{(2)}+K^{(2)}\right)\right]\ ,\\
{\rm II}& :=& \E_0 \exp\left[\left(\theta_{p_1}+\theta_{p'_0}\right) \left(W_{S_1}+W_{S_2}-2W_{S_1\cap S_2}\right) - 2\left(\Lambda(\theta_{p_1})+\Lambda(\theta_{p'_0}) \right)(n^{(2)}-K^{(2)})\right]\ , \\
{\rm III}& :=& \E_0 \left[\exp\left(2 \theta_{p_1} W_{S_1 \cap S_2} - 2 \Lambda(\theta_{p_1}) K^{(2)} \right) \{W_{S_1 \cap S_2} \le w_K\}\right]\ .
\eeqn
All these expectations only depend on $S_1$ and $S_2$ through $K$. 

The term ${\rm III}$ already appeared in the proof of Theorem \ref{thm:lower}, where we saw that ${\rm III}\leq \exp\left(\Delta K^{(2)}\right)$ for $K\leq k_{\min}$, and that ${\rm III}\leq \exp\left(\Delta_K K^{(2)}\right)$ for $K> k_{\min}$ where $k_{\min}$ is defined in \eqref{eq:def_kmin}, while $\Delta$ and $\Delta_k$ are defined in \eqref{Delta} and \eqref{Deltak}, respectively. 

Since the expectations inside ${\rm I}$ and ${\rm II}$ are not thresholded, we easily compute these terms:
\[{\rm I} = \exp\left[\left(N^{(2)}-2n^{(2)}+K^{(2)}\right) \big(\Lambda(2\theta_{p'_0}) - 2\Lambda(\theta_{p'_0}) \big) \right]\ ,\]
with
\[\Lambda(2\theta_{p'_0}) - 2\Lambda(\theta_{p'_0}) = \log\left(1+\frac{(p'_0-p_0)^2}{p_0(1-p_0)}\right) \le \frac{(p_1-p'_0)^2}{{p_0(1-p_0)}}\left(\frac{n^{(2)}}{N^{(2)}}\right)^2\ ;\]
and
\[{\rm II} = \exp\left[2 \left(n^{(2)}-K^{(2)}\right) \big( \Lambda(\theta_{p_1} + \theta_{p'_0}) - \Lambda(\theta_{p_1}) -  \Lambda(\theta_{p'_0}) \big) \right]\ ,\]
with 
\[\Lambda(\theta_{p_1} + \theta_{p'_0}) - \Lambda(\theta_{p_1}) -  \Lambda(\theta_{p'_0})
= \log\left(1-\frac{(p_0-p'_0)(p_1-p_0)}{p_0(1-p_0)}\right) \le - \frac{(p_1-p'_0)(p_1-p_0)}{{p_0(1-p_0)}}\frac{n^{(2)}}{N^{(2)}}\ .\]
Since $(p_1-p'_0)= (p_1-p_0)(1-n^{(2)}/N^{(2)})^{-1}$, we derive
\beq\label{eq:upperI.II}
{\rm I}\cdot {\rm II} \leq \exp\left[\frac{(p_1-p_0)^2}{p_0(1-p_0)}\left(-\frac{(n^{(2)})^2}{N^{(2)}}+ \frac{n^{(2)}}{N^{(2)}}\left(K^{(2)}-\frac{(n^{(2)})^2}{N^{(2)}}\right)\frac{2-\frac{n^{(2)}}{N^{(2)}}}{\left(1-\frac{n^{(2)}}{N^{(2)}}\right)^2}\right)\right] =: V_K\ .
\eeq
By Lemma \ref{lemma:n3}, $\Delta n^2/N \rightarrow 0$ and by \eqref{eq:hypo_p0_tildep0},  $\Delta n^3/N^{3/2}\rightarrow 0$. Hence, there exists $b\rightarrow \infty$ such that $\Delta \frac{n^3}{N^{3/2}}b^2\rightarrow 0$ and $\Delta b\frac{n^2}{N} \to 0$.  Define $k'_0= \lfloor  \frac{n^2}{N}+ \frac{n}{N^{1/2}}b\rfloor$ and $k_0=\lfloor  b\frac{n^2}{N}\rfloor$. We can take $b$ small enough to constrain $k_0\leq n/2$.

\medskip
To prove that  $\E_0 \Lt^2\leq 1+o(1)$, we only need to  show the four following results 
\begin{eqnarray}\label{eq:terme_difficile_unknown}
\E \left[\{K \leq k'_0\} \exp\left\{\Delta K^{(2)}\right\} V_K\right]&\leq& 1+o(1)\ ,\\
\E \left[\{k'_0< K \le k_0\} \exp\left\{\Delta K^{(2)}\right\} V_K\right]&=&o(1)\ , \label{eq:terme2_difficile_unknown}\\
\E \left[\{k_0< K \leq \kmin\} \exp\left\{\Delta K^{(2)}\right\}V_K\right]&=& o(1)\ .\label{eq:terme3_difficile_unknown}\\
\E \left[\{\kmin <K \leq n\} \exp\left\{\Delta_K K^{(2)}\right\} V_K\right]&=& o(1)\ .\label{eq:terme4_difficile_unknown}
\end{eqnarray}
By Lemma \ref{lemma:n3} and the definition \eqref{eq:upperI.II} of $V_k$, we have $\log(V_k)=o(k^2/N) = o(k)$ when $k\leq n$. As a consequence, the expectations in \eqref{eq:terme3_difficile_unknown} and \eqref{eq:terme4_difficile_unknown} are almost the same as the expectations $\E \left[\{k_0< K \leq \kmin\} \exp\left\{\Delta K^{(2)}\right\}\right]$ and $\E \left[\{\kmin K \leq n\} \exp\left\{\Delta_K K^{(2)}\right\}\right]$ that we bounded in the proof of Theorem \ref{thm:lower}. 
This is made rigorous to establish the following result.
\begin{lem}\label{lem:Klarge}
 Under the entropy condition \eqref{eq:condition_entrop_unknown}, the bounds \eqref{eq:terme3_difficile_unknown} and \eqref{eq:terme4_difficile_unknown} hold. 
\end{lem}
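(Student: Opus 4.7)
The plan is to reduce \eqref{eq:terme3_difficile_unknown} and \eqref{eq:terme4_difficile_unknown} to the two sums already handled in the proof of \thmref{lower} (in \secref{lower-proof}), the only change being an additive $o(k)$ correction to each exponent coming from $V_k$. As recorded just before the statement, $\log V_k = o(k^2/N) = o(k)$ uniformly over $k\le n$, because the coefficient of $k^{(2)}$ in \eqref{eq:upperI.II} is $O(\Delta n^2/N^2) = o(1/N)$ by Lemma~\ref{lemma:n3}, combined with $n/N \to 0$.

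For \eqref{eq:terme3_difficile_unknown}, I would dominate $K$ stochastically by $\Bin(n,\rho)$ via \lemref{hyper}, apply \lemref{chernoff}, and use the expansion $n H_\rho(k/n) = k\log(k/(n\rho)) + O(k)$ from \lemref{H}, to obtain, for every $k$ in the range $k_0 < k \le \kmin$,
\[
\P(K \ge k)\, e^{\Delta k^{(2)}}\, V_k \le \exp\!\left[k\Bigl(\Delta \tfrac{k-1}{2} - \log(k/(n\rho)) + 1 + o(1)\Bigr)\right],
\]
with a $k$-uniform $o(1)$. This is precisely the expression analyzed in \secref{lower-proof}: by the very choice \eqref{eq:def_kmin} of $\kmin$, the bracket is at most $-\omega + 1 + o(1)$ with $\omega \to \infty$. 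Summation of the resulting geometric series yields $o(1)$.

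For \eqref{eq:terme4_difficile_unknown} the argument is structurally identical, with $\Delta$ replaced by $\Delta_k = -2 H_{p_1}(q_k) + H(q_k)$ from \eqref{eq:delta_k}. Using the defining relation \eqref{qk} of $q_k$ and the entropy lower bound of \lemref{ent_p1_qk}, one obtains
\[
\Delta_k \tfrac{k-1}{2} - \log(k/(n\rho)) + 1 + o(1) \longrightarrow -\infty
\]
uniformly over $\kmin < k \le n$, where the extra $o(1)$ again absorbs $\log V_k/k$. Summation once more yields $o(1)$.

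The only potential obstacle is ensuring that the $o(1)$ correction coming from $V_k$ is uniform in $k$, but this is immediate from \eqref{eq:upperI.II}: the coefficient of $k^{(2)} - (n^{(2)})^2/N^{(2)}$ does not depend on $k$ and is $o(1/N)$ by Lemma~\ref{lemma:n3}, so a single $o(1)$ sequence controls the correction for every $k \le n$ at once. With that in hand, the proof is a clean transplant of the arguments in \secref{lower-proof}.
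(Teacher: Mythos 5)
Your argument matches the paper's own proof almost step for step: both reduce \eqref{eq:terme3_difficile_unknown} and \eqref{eq:terme4_difficile_unknown} to the two sums controlled in the proof of \thmref{lower}, using $\log V_k = o(k)$ uniformly over $k \le n$ (via Lemma~\ref{lemma:n3}) to absorb the correction into the $+1+o(1)$ term of each exponent. The proposal is correct and takes essentially the same approach.
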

In fact the main difference between the proof of \thmref{lower} and the current proof lies in the control of the two expectations in \eqref{eq:terme_difficile_unknown} and \eqref{eq:terme2_difficile_unknown}. Here, we need to carefully upper bound $V_K$ in order  to balance $\Delta K^{(2)}$. 
 Using the identity $\log(1+x)\leq x$, the property $\log(V_k)\leq 0$ for $k\leq n/2$ --- easily verified from the definition \eqref{eq:upperI.II} --- and $k_0\leq n/2$, we get 
\[V_k\leq \exp\left[\Delta \left(-\frac{(n^{(2)})^2}{N^{(2)}}+ \frac{n^{(2)}}{N^{(2)}}\left(k^{(2)}-\frac{(n^{(2)})^2}{N^{(2)}}\right)\frac{2-\frac{n^{(2)}}{N^{(2)}}}{\left(1-\frac{n^{(2)}}{N^{(2)}}\right)^2}\right)\right]\ ,\]
for $k\leq k_0$. In the sequel, we note  \[\Delta':= \frac{\Delta}{2}\left[1+ \frac{n^{(2)}}{N^{(2)}}\frac{2-\frac{n^{(2)}}{N^{(2)}}}{\left(1-\frac{n^{(2)}}{N^{(2)}}\right)^2}\right]\ , \]
so that $2\Delta'\sim \Delta$. Thus, we get for any $k\leq k_0$, 
\begin{eqnarray}
\exp\left\{\Delta k^{(2)}\right\}V_k&\leq& \exp\left\{2\Delta'\left(k^{(2)}-\frac{(n^{(2)})^2}{N^{(2)}}\right)\right\} \nonumber\\
&\leq &
\exp\left\{\Delta'\left(k^2-\frac{n^4}{N^2}\right)+ 2\Delta'\frac{n^3}{N^2}\right\}\nonumber\\
&\leq & (1+o(1))\exp\left\{\Delta'\left(k^2-\frac{n^4}{N^2}\right)\right\}\label{eq:upper_deltaV} \ ,
\end{eqnarray}
since $\Delta'n^3/N^2\prec \Delta n^3/N^2 \leq \frac{(p_1-p_0)^2}{p_0(1-p_0)}\frac{n^3}{N^2}=o(n/N)=o(1)$ by Lemma \ref{lemma:n3}.

\medskip
Using this upper bound \eqref{eq:upper_deltaV}, we consider the expectation in  \eqref{eq:terme_difficile_unknown} 
\beqn
\E \left[\{K \leq k'_0\} \exp\left\{\Delta K^{(2)}\right\} V_K\right]&\prec & \exp\left[\Delta'\left(k'^2_0-\frac{n^4}{N^2}\right)\right]\\
&\prec & \exp \left[\Delta'\left(k'_0-\frac{n^2}{N}\right)\left(k'_0+\frac{n^2}{N}\right)\right]\\
&\leq& \exp\left[\Delta' \frac{b n}{N^{1/2}} \big(\frac{2 n^2}N + \frac{b n}{N^{1/2}}\big) \right]= 1+o(1)
\eeqn
since $\Delta' \frac{b^2  n^2}{N}\prec \Delta \frac{b^2 n^2}{N}=o(1)$ and $\Delta' \frac{b n^3}{N^{3/2}} \ll \Delta \frac{b^2 n^3}{N^{3/2}} = o(1)$ by definition of $b$. We have proved \eqref{eq:terme_difficile_unknown}.

\medskip
To prove \eqref{eq:terme2_difficile_unknown}, we apply the Cauchy-Schwarz inequality and we upper bound $K$ by $k_0\leq bn^2/N$,
\beqn
\E \left[\{k'_0<K \leq  k_0\} \exp\left\{\Delta'\left(K^2- \frac{n^4}{N^2}\right)\right\}\right]&\leq& \E \left[\{k'_0<K \leq  k_0\} \exp\left\{\Delta'(b+1)\frac{n^2}{N}\left(K- \frac{n^2}{N}\right)\right\}\right]\\
&\leq & \P^{1/2}(K > k'_0) \ \E^{1/2}\left[\exp\left\{2\Delta'(b+1)\frac{n^2}{N}\left(K- \frac{n^2}{N}\right)\right\}\right]
\eeqn

Recall that $K \sim {\rm Hyp}(N, n, n)$, so that $\E K= \frac{n^2}N$ and $\Var(K) \le \frac{n^2}{N}$.  Hence, by Chebyshev's inequality, $\P(K> k'_0) \le 1/b^2 \to 0$.

We know from \citep[p.173]{aldous85} that $K$ has the same distribution as the random variable $\mathbb{E}(W|\mathcal{B}_p)$ where $W$ is binomial random variable of parameters $n$, $n/N$ and $\mathcal{B}_N$ some suitable $\sigma$-algebra. 
By a convexity argument, we apply this to get 
\beqn
\E\exp\left\{2\Delta'(b+1)\frac{n^2}{N}\left(K- \frac{n^2}{N}\right)\right\}&\leq&\left[1+\frac{n}{N}\left(e^{2\Delta'(b+1)\frac{n^2}{N}}-1\right) \right]^{n}e^{-2\Delta'(b+1)\frac{n^4}{N^2}}\\
&\leq & \exp\left[4\frac{n^6}{N^3}(b+1)^2\Delta^{'2}\right]\leq 1+o(1)\ ,
\eeqn
since $\Delta'b(\frac{n^2}{N}\vee \frac{n^3}{N^{3/2}}) = o(1)$ by definition of $b$. 
All in all, we have proved \eqref{eq:terme2_difficile_unknown}.

\subsubsection{Proof of Lemma \ref{lem:entropy_p0}}
The second convergence is a straightforward consequence of the definition of $p_0$, \eqref{lower1_eq_unknown} and \eqref{lower2_eq_unknown}, so that we focus on the first result.
Let us compute the difference between the two entropies $H_{p'_0}(p_1)$ and $H_{p_0}(p_1)$.
\beqn
H_{p'_0}(p_1)-H_{p_0}(p_1)&=& p_1\log\left(\frac{p_0}{p'_0}\right)+(1-p_1)\log\left(\frac{1-p_0}{1-p'_0}\right)\\
&\leq &\frac{n^{(2)}}{N^{(2)}}\left[p_1\frac{p_1-p'_0}{p'_0}-  (1-p_1)\frac{p_1-p'_0}{1- p'_0}\right]\\
&\leq &\frac{n^{(2)}}{N^{(2)}}\frac{(p_1-p'_0)^2}{p'_0(1-p'_0)}
\eeqn
Arguing as in the proof of Lemma \ref{lemma:n3}, we note that, under  conditions \eqref{n-p0_unknown} and (\ref{eq:condition_entrop_unknown}),
\[\frac{n^2}{N}\frac{(p_1-p'_0)^2}{p'_0(1-p'_0)}=o(1)\ , \]
so that $H_{p'_0}(p_1)-H_{p_0}(p_1)=o(1/N)=o\left(\log(N/n)/n\right)$, since $n\leq N$.

\subsubsection{Proof of Lemma \ref{lemma:n3}}

{\bf CASE 1:} $p_1/p_0\rightarrow 1$.
By condition \eqref{eq:condition_entrop_unknown},
\beqn
 \frac{n^2}{N}\frac{(p_1-p_0)^2}{p_0(1-p_0)}\sim 2H(p_1)\frac{n^2}{N}\prec \log\left(\frac{N}{n}\right)\frac{n}{N} =o(1)\ .
\eeqn

\noindent 
{\bf CASE 2:} $p_1/p_0\rightarrow c\in (1,\infty)$.
Similarly,
\beqn
 \frac{n}{N}\frac{(p_1-p_0)^2}{p_0(1-p_0)}&\prec& p_0(c-1)^2\frac{n^2}{N}\\
&\prec& H(p_1)\frac{n^2}{N}
\prec \log\left(\frac{N}{n}\right)\frac{n}{N} =o(1)\ .
\eeqn

\noindent 
{\bf CASE 3:} $p_1/p_0\rightarrow \infty$.
We have
\beqn
 \frac{n^2}{N}\frac{(p_1-p_0)^2}{p_0(1-p_0)}\sim  \frac{p_1^2}{p_0}\frac{n^2}{N}\ .
\eeqn
By  condition (\ref{eq:condition_entrop_unknown}) and $p_1\log(p_1/p_0)\sim H(p_1)\prec \frac1n \log(N/n)$. Dividing  this inequality by $p_0$ and then taking the logarithm leads to $\log(p_1/p_0) \prec \log\log(N/n) + \log(1/np_0) = o(\log(N/n))$ by \eqref{n-p0_unknown}. It follows that $p_1/p_0=o(\sqrt{N/n})$ and $p_1=o(\log(N/n)/n)$. All in all, we conclude that 
\beqn
  \frac{p_1^2}{p_0}\frac{n^2}{N}=o\left[\log(N/n)\sqrt{\frac{n}{N}}\right]=o(1)\ .
\eeqn

\subsubsection{Proof of Lemma \ref{lem:Klarge}}
 Let us first consider \eqref{eq:terme4_difficile_unknown}. Using the upper bound $\log(V_k) = o(k)$, we only have to prove that 
\[\E \left[\{K > \kmin\}\exp\left(\Delta_K K^{(2)}+ o(K)\right)\right]=o(1)\ .\]
 We have shown in the proof of Theorem \ref{thm:lower} (only using the entropy condition) that 
\[
\E \left[\{K \ge \kmin\}\exp\left(\Delta_K K^{(2)}\right)\right]\leq \sum_{k=\lfloor k_{\min}\rfloor+1}^n \exp\left[k\left(\Delta_k\frac{k-1}{2}-\log\left(\frac{k}{n\rho}\right)+ 1\right)\right]\]
tends to zero since all the terms $\Delta_k\frac{k-1}{2}-\log\left(\frac{k}{n\rho}\right)+ 1$ simultaneously go to $-\infty$ for $k= \lfloor k_{\min}\rfloor+1,\ldots, n$.  Consequently, 
\[\E \left[\{K > \kmin\}\exp\left(\Delta_K K^{(2)}+ o(K)\right)\right]\leq \sum_{k=\lfloor k_{\min}\rfloor+1}^n \exp\left[k\left(\Delta_k\frac{k-1}{2}-\log\left(\frac{k}{n\rho}\right)+ 1+o(1)\right)\right]\]
also tends to zero.\\

\medskip

Let us turn to \eqref{eq:terme3_difficile_unknown} following again the same arguments as in the proof of Theorem \ref{thm:lower}.
\beqn
\lefteqn{\E \left[\{k_0<K \leq   \lfloor \kmin\rfloor \} \exp\left\{\Delta K^{(2)}+o(K)\right\}\right]}&&\\&\leq &\sum_{k = k_0 + 1}^{\lfloor \kmin\rfloor} \exp\left[\Delta k^{(2)}+ o(k) - nH_{\rho}\left(\frac{k}{n}\right)\right]\\
&\leq & \sum_{k = k_0 + 1}^{\lfloor \kmin\rfloor} \exp\left[k\left\{\frac{\Delta}{2}( k-1)+o(1)-\log\left(\frac{k}{n\rho}\right)+1\right\}\right]\ .
\eeqn
 Hence, as in the previous proof, we only need to prove that 
\[
\omega := \min \left[\log b - \Delta k_0/2, \ \log\left(\frac{\kmin}{n\rho}\right) - \Delta\kmin/2\right]
\]
goes to $\infty$. By definition of $k_0$, we have $\Delta k_0=o(1)$, while we showed in the previous proof that $\log\left(\frac{\kmin}{n\rho}\right) - \Delta \kmin \to \infty$. 
With this, we conclude.

\subsection{Proof of \prpref{total}}
\label{sec:total-proof}

We start with a useful result for proving that a test is asymptotically powerful based on the first two moments of the corresponding test statistic.

\begin{lem} \label{lem:cheb}
Suppose that for testing $H_0$ versus $H_1$, a statistic $T$ satisfies
\beq \label{cheb}
\cheb_T := \frac{\E_1(T) - \E_0(T)}{\max\big(\sqrt{\Var_1(T)}, \sqrt{\Var_0(T)}\big)} \to \infty.
\eeq
Then there is a test based on $T$ that is asymptotically powerful.
\end{lem}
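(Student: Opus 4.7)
The plan is a classical two-sided Chebyshev (moment method) argument. Define a threshold at the midpoint of the two means,
\[
t := \tfrac{1}{2}\bigl(\E_0(T) + \E_1(T)\bigr),
\]
and consider the test $T^* := \IND{T > t}$. To bound the two error probabilities, I would apply Chebyshev's inequality under each hypothesis. Under $H_0$, using $t - \E_0(T) = \tfrac{1}{2}(\E_1(T) - \E_0(T))$, we have
\[
\P_0(T^* = 1) = \P_0\bigl(T - \E_0(T) > t - \E_0(T)\bigr) \le \frac{\Var_0(T)}{(t - \E_0(T))^2} = \frac{4 \Var_0(T)}{(\E_1(T) - \E_0(T))^2}.
\]
Symmetrically, under $H_1$,
\[
\P_1(T^* = 0) \le \frac{4 \Var_1(T)}{(\E_1(T) - \E_0(T))^2}.
\]

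Both bounds are at most $4/\cheb_T^2$ by the definition of $\cheb_T$ (since the denominator of $\cheb_T$ dominates each of $\sqrt{\Var_0(T)}$ and $\sqrt{\Var_1(T)}$). Hypothesis \eqref{cheb} forces $4/\cheb_T^2 \to 0$, so both Type I and Type II errors tend to zero and the test is asymptotically powerful.

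There is essentially no obstacle here; the only point worth remarking on is that the statement is for a simple $H_1$, and when the result is invoked later for a composite alternative, one simply checks that the moment ratio $\cheb_T$ diverges uniformly over $|S| = n$ (or, equivalently, for the worst-case $S$), at which point the Type II bound above applies uniformly. The midpoint threshold $t$ is a convenience — any $t$ of the form $\E_0(T) + c_N \sqrt{\Var_0(T)}$ with $c_N \to \infty$ and $c_N = o(\cheb_T)$ works equally well.
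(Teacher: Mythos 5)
Your proof is correct and follows the same essential route as the paper's: apply Chebyshev's inequality under each hypothesis to a threshold placed in the gap between $\E_0(T)$ and $\E_1(T)$. The paper happens to set the threshold at $\E_0(T) + \sqrt{\cheb_T\,\Var_0(T)}$ rather than at the midpoint of the means, which makes its Type~II bound a touch less symmetric to state, but both choices give exactly the bound you note ($c_N \to \infty$, $c_N = o(\cheb_T)$), so the two arguments are variants of the same moment-method computation.
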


\begin{proof}
Consider the test that rejects when $T \ge \E_0(T) + \sqrt{\cheb_T \Var_0(T)}$.  By Chebyshev's inequality, the probability of type I error tends to zero:
\[
\P_0(T \ge \E_0(T) + \sqrt{\cheb_T \Var_0(T)}) \le \frac1{\cheb_T} \to 0.
\]
For the probability of type II error, we have
\[
\P_1(T \ge \E_0(T) + \sqrt{\cheb_T \Var_0(T)}) = \P_1\left(\frac{T -\E_1(T)}{\sqrt{\Var_1(T)}} \ge - \gamma\right) \ge 1 -\frac1{\gamma^2},
\]
where
\[
\gamma := \frac{\cheb_T \max\big(\sqrt{\Var_1(T)}, \sqrt{\Var_0(T)}\big) - \sqrt{\cheb_T \Var_0(T)}}{\sqrt{\Var_1(T)}} \to \infty.
\]
\end{proof}

We now apply \lemref{cheb} to the total degree test.  From \eqref{total-dist}, under the null, 
\[
\E_0(\tot) = \frac{N(N-1)}2 p_0, \quad \Var_0(\tot) = \frac{N(N-1)}2 p_0 (1-p_0), 
\]
while under the alternative,
\[
\E_1(\tot) = \frac{N(N-1)}2 p_0 + \frac{n(n-1)}2 (p_1 -p_0),
\]
and
\[
\Var_1(\tot) = \frac{N(N-1)}2 p_0 (1-p_0) + \frac{n(n-1)}2 [p_1 (1-p_1) - p_0 (1-p_0)]. 
\]

In any case, 
\[
\max\big(\Var_1(\tot), \Var_0(\tot)\big) \le \frac12 N^2 p_0 + \frac12 n^2 (p_1 - p_0).
\]
Recalling the definition of $\cheb_\tot$ in \eqref{cheb}, under \eqref{total} we have 
\[
\cheb_\tot \ge \frac{n(n-1) (p_1 -p_0)}{\sqrt{N^2 p_0 + n^2 (p_1 - p_0)}} \asymp \frac{n^2}N  \frac{p_1 -p_0}{\sqrt{p_0}} \to\infty.
\]
Therefore, the total degree test is powerful when \eqref{total} holds.

\subsection{Proof of \prpref{scan}}
\label{sec:scan-proof}

We use the union bound, Chernoff's bound \eqref{chernoff} and \eqref{binom} to get
\beqn
\P_0(\scan \ge a \nn ) 
&\le& {N \choose n} \exp\left(- \nn H(a) \right) \\
&\le& \exp\left(n \log(Ne/n) - \nn H(a)\right),
\eeqn
which goes to zero when
\beq \label{scan-1}
\log(N/n) - \frac{(n-1)}2 H(a) \to -\infty.
\eeq
Choose $a = \eta p_0 + (1-\eta) p_1$ with $\eta \in (0,1)$ fixed, sufficiently small that 
\[
\liminf \frac{n H(a)}{2 \log (N/n)} > 1.
\]
This is possible because of how $H$ varies, which is described in \lemref{H}.  We then consider the test that rejects when $\scan \ge a \nn$.  We just chose $a$ so that its level tends to zero.  
Under the alternative, let $S$ denote the community.  By definition, $\scan \ge W_S$, and since $W_S \sim \Bin(\nn, p_1)$ and $p_1 \nn \to \infty$, $W_S = p_1 \nn + O_P(\sqrt{p_1 \nn})$.  Therefore, the test is powerful when $p_1 -a \gg \sqrt{p_1 \nn}$.  Since $p_1 -a = \eta(p_1 - p_0)$ and $\eta > 0$ is constant, this is the same as $(p_1 -p_0) n^2 \gg \sqrt{p_1 n^2}$.  Now, if $p_1/p_0$ is bounded away from 1, this is true because $p_1 -p_0 \asymp p_1$ and $p_1 n^2 \to \infty$; while if $p_1/p_0 \to 1$, we use \lemref{H} and \eqref{scan} to get that $(p_1 -p_0)^2 n/p_0 \ge {\rm cst} \log (N/n)$, implying that 
$(p_1 -p_0) n^2/\sqrt{p_1 n^2} \sim (p_1 -p_0) n/p_0 \to \infty$.

\subsection{Proof of Proposition \ref{prp:degre_variance}}

The arguments are based on cumbersome, but pedestrian moment calculations.

{\bf Under the null.}
We first show that $V^*$ remains bounded under the null.
Rewrite $V$ as
\begin{eqnarray}
V &= & \frac{1}{N-2}\sum_{i=1}^N\left(W_{i\cdot}-(N-1)p_0\right)^2+(\hat{p}_0-p_0)^2(N-1)\left[-\frac{N(N-1)}{N-2}+\frac{N^{(2)}}{N^{(2)}-1}\right]\nonumber\\
&+&(\hat{p}_0-p_0)(N-1)\frac{N^{(2)}}{N^{(2)}-1}(-1+2p_0) - (N-1)\frac{N^{(2)}}{N^{(2)}-1}p_0(1-p_0)\ .\label{eq:expression_2_V}
\end{eqnarray}
Since $\E_0(\hat{p}_0-p_0)^2=(N^{(2)})^{-1}p_0(1-p_0)$ and $\E_0(W_{i\cdot}-(N-1)p_0)^2=(N-1)p_0(1-p_0)$, it follows that $\E_0 V=0$. 
For the variance, we have 
\beqn
\Var_0\left[(\hat{p}_0-p_0)^2\right]&\leq &\frac{2{p_0}^2(1-p_0)^2}{(N^{(2})^2}+\frac{p_0(1-p_0)}{(N^{(2)})^3}, \ \text{ and} \\
\Var_0\left[\sum_{i=1}^N\left(W_{i\cdot}-(N-1)p_0\right)^2\right]& =& 2N(N-1)\left[(N-3){p_0}^2(1-p_0)^2+p_0(1-p_0)[{p_0}^3+(1-p_0)^3]\right]\ .
\eeqn
Hence, we get 
\beqn
\Var_0(V)\prec N{p_0}^2+p_0\prec N{p_0}^2\ ,
\eeqn
since $p_0 \succ 1/N$.
Therefore, by Chebyshev's inequality, $V = O_P(\sqrt{N} p_0)$.
Under the null, $\NN \hat p_0 = W \sim \Bin(\NN, p_0)$, and because $\NN p_0 \to \infty$, we have $\hat p_0 \ge \frac12 p_0$ with probability tending to 1 as $N \to \infty$.
We conclude that, under the null, $V^* = O_P(1)$. 

{\bf Under the alternative.}
Turning to the alternative hypothesis, we shall prove that $V^*$ tends to infinity with high probability by showing that $\E'_1(V)\gg \sqrt{N}p_0 \vee \sqrt{\Var'_1(V)}$ since  $\sqrt{N}\widehat{p}_0=O_{\P'_1}(\sqrt{N}p_0)$. 
The expression \eqref{eq:expression_2_V} of $V$ still holds. 

By definition of $p_0 =p'_0+n^{(2)}/N^{(2)} (p_1 - p'_0)$, we have $\E'_1(\hat{p}_0)= p_0$. Furthermore, 
\beqn
\E'_1[(\hat{p}_0-p_0)^2]-\frac{1}{N^{(2)}}p_0(1-p_0)
&\sim &- 4\frac{n^2}{N^4}(p_1-p'_0)^2 \\
\E'_1\left[\sum_{i=1}^N\left(W_{i\cdot}-(N-1)p_0\right)^2\right]- N(N-1)p_0(1-p_0)&\sim&  n^3(p_1-p'_0)^2
\eeqn
Inputing this into \eqref{eq:expression_2_V}, we get
\beq\label{eq:equivalent_E1V}
\E'_1[V]\sim (p_1-p'_0)^2\frac{n^3}{N}\ .
\eeq
By \eqref{degre_variance}, $\E'_1[V]\gg \sqrt{N}p'_0$ and $\E'_1[V]\gg n^2/N^{3/2}(p_1-p'_0)$ and it follows that 
 $\E'_1[V]\gg \sqrt{N}p_0$. To conclude, we need to control the variance of $V$ under $\P_1'$. Tedious computations lead us to 
\beqn
\Var'_1\left[\hat{p}_0-p_0\right]&\prec& \frac{p_0}{N^2}, \\
\Var'_1\left[(\hat{p}_0-p_0)^2\right]&\prec& \frac{p^2_0}{N^4}+ \frac{p_0}{N^6}, \text{ and}\\
\Var'_1\left[\sum_{i=1}^N\left(W_{i\cdot}-(N-1)p_0\right)^2\right]&\prec &N^2p_0+N^3{p_0}^2+ n^3(p_1-p'_0)^2+ n^3Np_0(p_1-p'_0)^2+n^4(p_1-p'_0)^3\ ,
\eeqn
so that, using the fact that $p_0 \succ 1/N$, we get
\beqn
\Var'_1[V]\prec N{p_0}^2 + \frac{n^3}{N} p_0 (p_1-p'_0)^2+\frac{n^4}{N^2}(p_1-p'_0)^3\ .
\eeqn

We conclude that $\E'_1[V] \gg \sqrt{\Var'_1(V)}$ as soon as the following cconditions are met  
\[(p_1-p'_0)^2\frac{n^3}{N} \gg \sqrt{N}p_0, \quad (p_1-p'_0) \frac{n^{3/2}}{N^{1/2}} \gg \sqrt{p_0}, \quad n (p_1-p'_0)^{1/2} \gg 1.\] 
We already argued that the first one holds, while the second and third are easily seen to be implied by the first condition and the fact that $p_0 \succ 1/N$.

\subsection{Proof of \prpref{scan_estimate}}

It suffices to show that the scan test is asymptotically powerful for $\wh H_0$ versus $H_1$, where the model under $\wh H_0$ is $\bbG(N, \wh p_0)$.
In view of \prpref{scan}, it is therefore enough to prove that, under \eqref{scan_estimate1}, we have
\[
\liminf \frac{n H_{\wh p_0}(p_1)}{2 \log(N/n)} > 1.
\]

First note that $\wh p_0 = W/N^{(2)}$  is concentrated around its mean.  Indeed, we have 
\[\E [ N^{(2)} \wh p_0 ] = (N^{(2)}-n^{(2)}) p_0 + n^{(2)} p_1 = N^{(2)} p_0 + n^{(2)} (p_1 -p_0),\]
and
\[\Var [ N^{(2)} \wh p_0 ] = (N^{(2)}-n^{(2)}) p_0 (1-p_0) + n^{(2)} p_1 (1 -p_1) \le \E [ N^{(2)} \wh p_0 ] .\]
Hence, by Chebyshev's inequality,
\[\wh p_0 = p_0 + a + O_P\big( \frac1{N} \sqrt{p_0 + a}\big), \quad a := \frac{n^{(2)}}{N^{(2)}} (p_1 -p_0).\] 
Since $p_0\gg N^{-2}$, we have $\sqrt{p_0}/N=o(p_0)$. If $a\geq p_0$, then $p_0\gg N^{-2}$ impies that $\sqrt{a}/N=o(a)$. All in all, we get $\sqrt{p_0+a}/N=o(p_0+a)$ and
$\wh p_0 \sim_P p_0 + a$.
As in the previous proofs, we can assume that $p_1/p_0 \to r \in [1,\infty]$. In the three following case, we prove that
\[
\liminf \frac{n H_{\wh p_0}(p_1)}{2 \log(N/n)} > 1.
\]

\noindent 
{\bf CASE 1:} $p_1/p_0\rightarrow 1$. In that case, we have $a=o(p_0)$ and $\sqrt{p_0}/N=o(p_1-p_0)$ since 
\[\frac{(p_1-p_0)^2}{p_0}\succ H_{p_0}(p_1)\succ \frac{\log(N/n)}{n}\ .\]
Hence, $\wh p_0-p_0=o(p_1-p_0)$ and we conclude that 
\beqn
H_{\wh p_0}(p_1)&\sim_P& \frac{(p_1-\wh p_0)^2}{2p_0(1-p_0)}\geq \frac{(p_1-p_0)^2 -2(p_1-p_0)(\wh p_0-p_0)}{2p_0(1-p_0)}
\sim_P H_{ p_0}(p_1)\ .
\eeqn

\noindent
{\bf CASE 2:} $p_1/p_0\rightarrow r\in (1,\infty)$. Hence, $a=o(p_0)$ and $p_0\sim_P\wh p_0$. It follows that
\beqn
H_{\wh p_0}(p_1)&\sim_P& \wh p_0(r\log(r)-r+1)\sim_P H_{ p_0}(p_1)\ .
\eeqn

\noindent 
{\bf CASE 3:} $p_1/p_0\rightarrow \infty$. Since $\wh p_0\sim_P p_0+a$, we derive
\beqn
H_{\wh p_0}(p_1)&\sim_P& p_1\log\left(\frac{p_1}{p_0+a}\right)\geq  p_1\log\left(\frac{p_1}{2(p_0\vee a)}\right)\sim_P H_{p_0}(p_1)\wedge 2p_1\log\left(\frac{N}{n}\right)
\eeqn
It remains to prove that $\lim\inf np_1>1$ when $\lim\inf nH(p_1)/\log(N/n)>2$. Assume that $\lim\inf np_1\leq 1$ so that there exists a subsequence satisfying 
\[\lim np_1\leq 1\text{ and }\lim\inf np_1\frac{\log(p_1/p_0)}{\log(N/n)}> 2.\] It follows that $\lim\inf \log(1/(np_0))/\log(N/n)> 2$ and $\lim\sup N^2p_0/n\leq 1$, which contradicts the assumption of the proposition. 
\medskip

\subsection{Proof of \prpref{relaxed}}
We prove the result when $p_0$ is known.  The situation when $p_0$ is unknown can be dealt with in a similar way; see, for example, the proof of \prpref{scan_estimate}.
Let $\bB = \bW^2$.
We first lower bound ${\rm SDP}_n(\bW^2)$ from below under the alternative where $S$ is the anomalous subset of indices.
We have 
\[
{\rm SDP}_n(\bB) \ge \lambda_n^{\rm max}(\bB) \ge \lambda_n^{\rm max}(\bB_S) \ge \frac1n {\bf 1}_S^\top \bW^2 {\bf 1}_S = \frac1n \sum_{i,j\in S} \sum_{k=1}^N W_{ik} W_{kj}.
\]
We have
\begin{align*}
\mu_S 
&:= \frac1n \E_S \big( \sum_{i,j\in S} \sum_{k=1}^N W_{ik} W_{kj} \big) \\
&= [(n-1) p_1 + (N-n) p_0] + (n-1) \big[(n-2)p_1^2 + (N-n) p_0^2\big], \\
&= (N-1) p_0 + (n-1) (p_1-p_0) + (n-1) (N-2) p_0^2 + (n-1)(n-2) (p_1^2 - p_0^2),
\end{align*}
and, after some tedious but straightforward calculations,
\[
\sigma_S^2 := \frac1{n^2} \Var_S \big( \sum_{i,j\in S} \sum_{k=1}^N W_{ik} W_{kj} \big) = O\left( (N/n) p_0 (1-p_0) [1 + (n p_0)^2] + p_1 (1-p_1) [1 + (n p_1)^2] \right).
\]
By Chebyshev's inequality, under the alternative, ${\rm SDP}_n(\bB) \ge \mu_S - O_P(\sigma_S)$.
  
Under the null, we bound ${\rm SDP}_n(\bB)$ from above as \cite{berthet} do.
Specifically, they use a result of \cite{bach2010convex}, which says that
\[
{\rm SDP}_n(\bB) = \min_{\bU} \lambda^{\rm max}(\bB + \bU) + n |\bU|_\infty,
\]
where the minimum is over symmetric matrices $\bU = (U_{ij})$ and $|\bU|_\infty := \max_{i,j} |U_{ij}|$.
Similar to what \cite{berthet} do, we apply this identity to $\bU = (U_{ij})$ with $U_{ij} = -B_{ij} \IND{|B_{ij}| \le z}$, obtaining
\[
{\rm SDP}_n(\bB) \le \lambda^{\rm max}(\tau_z(\bB)) + n z.
\]
where $\tau_z(\bB)$ is the hard thresholding of $\bB$ at threshold $z$, meaning the matrix with $(i,j)$ coefficient equal to $B_{ij} \IND{|B_{ij}| > z}$.
Under the null, we have
\[
B_{ii} = \sum_{k} W_{ik} \sim \Bin(N-1, p_0),
\]
and, when $i \ne j$,
\[
B_{ij} = \sum_{k} W_{ik} W_{jk} \sim \Bin(N-2, p_0^2).
\]

Fix $\eps > 0$. 
Using Bernstein's inequality (\lemref{bernstein}) and the union bound, we find that the following inequalities happen simultaneously with probability tending to one under the null:
\[
\max_{i} B_{ii} \le (N-1) p_0 + x_0, \quad x_0 := \sqrt{(1+\eps) 2 N p_0 (1-p_0) \log N} + (1+ \eps)\log(N) ,
\]
\[
\max_{i \ne j} B_{ij} \le (N-2) p_0^2 + x_{00}, \quad x_{00}:=  2 \sqrt{(1+\eps)  N p_0^2 (1-p_0^2) \log N} +  2(1+ \eps)\log(N) .
\]
Hence, choosing $z = (N-2) p_0^2 + x_{00}$, we have
\[
{\rm SDP}_n(\bB) \le \zeta := (N-1) p_0 + x_0 + n (N-2) p_0^2 + n x_{00},
\]
with high probability under the null. In order to conclude, we need to prove that $\mu_S - O(\sigma_S) > \zeta$ with probability going to one.

Before proceeding, we note that \eqref{relaxed} implies that, for some $\eta > 0$, 
\[
n p_1^2 > 2 (1 + \eta) p_0 \sqrt{N \log N}\ ,
\]
and \eqref{n-p0} implies that either $n p _0 \ge 1$, or $(N/n)^{-a} < n p_0 < 1$, for some sequence $a \to 0$. In particular, this implies
\[n^2\geq np_0\sqrt{N\log(N)}\geq \sqrt{N}(n/N)^a\  ,\]
so that $n\geq N^{1/4-a}$. It also follows that $n \sqrt{p_0} > \sqrt{n} (N/n)^{-a} \to \infty$.  

We have $\mu_S - \zeta \ge (1 + o(1)) n^2 p_1^2 - N p_0^2 - x_0 - n x_{00}$, with
\[\frac{n^2 p_1^2}{N p_0^2} > \frac{2 (1 + \eta) n p_0 \sqrt{ N \log N}}{N p_0^2} \asymp \frac{n \sqrt{\log N}}{\sqrt{N} p_0} > \frac{n^2 \sqrt{\log N} (N/n)^{a}}{\sqrt{N}} \geq \sqrt{\log N} \to \infty\ ,\]
\[\frac{x_0}{n x_{00}} \le \frac1{n \sqrt{p_0}} + \frac1n \to 0\ ,\]
\[\frac{n x_{00}}{n^2 p_1^2} \le \frac{2 n \sqrt{(1+\eps)  N p_0^2 \log N} + 2(1+ \eps)n \log(N)}{2 (1 + \eta) n p_0 \sqrt{ N \log N}} = \frac{1+\eps}{1+\eta} + \sqrt{\frac{(1+\eps)\log N}{(1+\eta)N p_0^2}} = \frac{1+\eps}{1+\eta} + o(1)\ , \]
since $ N p_0^2 > N n^{-2} (N/n)^{-2a} > N^{2t - 2a}$ with $  2t - 2a \to 2t > 0$.
Assuming that $\eta >  \eps$, it remains to show that $n^2 p_1^2 \gg \sigma_S$ to prove that $\mu_S - O(\sigma_S) > \zeta$ with probability going to one in the asymptote.

We have $\sigma_S^2 \asymp Np_0/n + N n p_0^3 + n^2 p_1^3$, and
\[\frac{n^2 p_1^2}{\sqrt{Np_0/n}} \succ n \sqrt{p_0} \sqrt{n \log N} \to \infty\ ,\]
since $n \sqrt{p_0} \to \infty$, and also
\[\frac{n^2 p_1^2}{\sqrt{N n p_0^3}} \succ \sqrt{n \log(N)/p_0} \to \infty\ ,  \]
and
\[\frac{n^2 p_1^2}{\sqrt{n^2 p_1^3}} = n \sqrt{p_1} \ge n p_1 \to \infty\ .\]

\subsection{Proof of Proposition \ref{prp:max}}
The first results follows from a simple consequence of Bernstein's inequality for binomial random variables and the union bound.  Details are omitted. Let us concentrate on the second bound. It suffices to prove that with probability $\mathbb{P}_S$ going to one $\max_{i\in S}W_{i\cdot}< \max_{i\in S^c}W_{i\cdot}$ since the distribution of $\max_{i\in S^c}W_{i\cdot}$ under $\mathbb{P}_S$ is stochastically smaller than the distribution of $\max_{i=1,\ldots, N}W_{i\cdot}$ under $\mathbb{P}_0$. 
Since $\lim\sup \log(n)/\log(N) <1$, we can assume that $n<N^{1-\epsilon}$ for some $\epsilon>0$. Condition \eqref{n-p0} also enforces $p_0\gg \log(N)/N$. Since the power of the maximal degree test is increasing with respect to $p_1$, we can assume that $p_1$  satisfies Condition \eqref{eq:condition_lower_degree} but is still large enough so that $p_1\gg \log(n)/n$.

Fix $\delta>0$ arbitrarily small. Applying Berntein's inequality (Lemma \ref{lem:bernstein}) and using $Np_0\gg \log(N)$ and $np_1\gg \log(n)$, we derive that
\begin{eqnarray}
\lefteqn{\max_{i\in S}W_{i\cdot}- (N-1)p_0+ (n-1)(p_1-p_0)}&&\nonumber \\& \leq & \sqrt{2(1+\delta)(N-1)p_0(1-p_0)\log(n)}\nonumber + \sqrt{(2+\delta)np_1(1-p_1)\log(n)} \nonumber\\
&\leq & \sqrt{2(1+\delta)(1-\epsilon)(N-1)p_0(1-p_0)\log(n)}(1+o(1))\ .\label{eq:upper_bound_Wi}
\end{eqnarray}
with probability going to one since we assume that $n(p_1-p_0)=o(\sqrt{N\log(N)p_0})=o(Np_0)$.

Let us consider a consider a subset $T\subset S^c$ of size $N^{1-\kappa}$ with some $\kappa>0$. As the $W_{i\cdot}$ are not independent, it is not straightforward to directly lower bound their supremum. This is why we compare it to independent variables. Let us call the $i^*_T$ the smallest $i$ in $T$ that achieves $\max_{i\in T}\sum_{j\in T^c}W_{i,j}$
\beqn
\max_{i\in S^c}W_{i\cdot}&\geq& \max_{i\in T}W_{i\cdot}
\geq  \max_{i\in T}\sum_{j\in T^c}W_{i,j} + \sum_{j\in T}W_{i^*_T,j}\ .
\eeqn
Observe that that the first term is supremum of $|T|$ independent binomial variables  and that the second term follows a binomial distribution with parameters $p$ and $|T|$.  
With probability going to one, we have $\sum_{j\in T}W_{i^*_T,j}\geq |T|p_0- \sqrt{|T|p_0(1-p_0)\log(|T|)}$.
Let us turn to the supremum of independent binomial distributions.
We start from $\mathbb{P}(\Bin(n,p)= k)= p^k(1-p)^{n-k}\binom{n}{k}$. Consider $p$ bounded away from $1$ and $k\geq np$ such that $k/n$ is also bounded away from one. 
Using the stirling formula $\sqrt{2\pi n}(n/e)^n<n!< \sqrt{2\pi n}(n/e)^ne^{1/(12n)}$, we get
\beqn
\mathbb{P}(\Bin(n,p)= k+i)&\geq &\exp\left[-nH_{p}(k/n)\right]\frac{1}{e^2\sqrt{2\pi k}}\left(\frac{p(1-k/n)}{k/n(1-p)}\right)^{i} \left(1-\frac{i}{k+i}\right)^{i}\ ,\\
\mathbb{P}(\Bin(n,p)\geq  k)&\succ& \exp\left[-nH_{p}(k/n)\right]\frac{1}{\sqrt{k}}\frac{k/n(1-p)}{k/n-p}\left[1-\left(\frac{p(1-k/n)}{k/n(1-p)}\right)^{\sqrt{k}}\right]\ ,
\eeqn
where we have summed the first inequality for $i=0,\ldots, \sqrt{k}-1$. 
Applying this lower bound to $\sum_{j\in T^c}W_{i,j}$ and using Lemma \ref{lem:H}, we derive that 
\beqn
\mathbb{P}_S\left[\sum_{j\in T^c}W_{i,j}\geq (N-1-|T|)p_0+  \sqrt{2(1-\delta)(N-|T|-1)p_0(1-p_0)\log(|T|)}\right]
&\succ & \frac{|T|^{1-\delta}}{\sqrt{(1-\delta)\log(|T|)}}\ .
\eeqn
Since the random variables $\sum_{j\in T^c}W_{i,j}$ for $i\in T$ are independent, it follows that 
\[\sup_{i\in T}\sum_{j\in T^c}W_{i,j}\geq (N-1-|T|)p_0+ \sqrt{2(1-\delta)(N-|T|-1)p_0(1-p_0)\log(|T|)}\ ,\]
with probability going to one. 
All in all, we derive that with probability going to one
\begin{eqnarray*}
\max_{i\in S^c}W_{i\cdot}\geq (N-1)p_0+ \sqrt{2(1-\delta)(N-1)p_0(1-p_0)(1-\kappa)\log(N)}-2\sqrt{2N^{1-\kappa}p_0(1-p_0)\log(N)}\ ,
\end{eqnarray*}
where the last term is negligible in front of the second term. Comparing this last lower bound with (\ref{eq:upper_bound_Wi}) and taking $\kappa$ and $\delta$ small enough allows us to conclude.

\subsection{Proof of Proposition \ref{prp:densest_subgraph}}

By Chebyshev's inequality $h(\cV)\sim_{\mathbb{P}_0} Np_0/2$ with probability going to one. Since $h(S)\leq |S|/2$, we have $h(S)\leq Np_0/4$ for all subsets $S$ of size smaller than $Np_0/2\rightarrow \infty$.
Note that $|S|h(S) \sim \Bin(|S|^{(2)}, p_0)$. Applying Bernstein inequality (Lemma \ref{lem:bernstein}) and Lemma \ref{lem:binom} to all  subsets $S$ of size larger than $Np_0/2$, we derive that 
\beqn
|S|h(S)\leq \frac{|S|^2}{2}p_0+ \sqrt{|S|^3p_0(1-p_0)\log\left(\frac{Ne^2}{|S|}\right)}+ |S|\log\left(\frac{Ne^2}{|S|}\right)
\eeqn	
with probability larger than $1- \exp(-Np_0/2)$. Comparing $h(S)$ with $Np_0/2$, we get
\beqn
\frac{2h(S)}{Np_0}\leq \frac{|S|}{N}+ 2\sqrt{\frac{|S|}{N}}\sqrt{\frac{2+\log(N/|S|)}{Np_0}} + 2\frac{2+ \log(N/|S|)}{Np_0}= \frac{|S|}{N}+ 2\sqrt{\frac{|S|}{N}}o(1)+o(1)\ ,
\eeqn
since $Np_0\gg \log(N)$. This quantity is away from one, except if $|S|\sim N$. As a consequence, $\max_S h(S)\sim_{\mathbb{P}_0} h(\cV)\sim_{\mathbb{P}_0} Np_0/2$ with probability going to one.\\

Let us turn to the alternative distribution. Under $\mathbb{P}_S$, $|S|h(S)\sim \Bin(|S|^{(2)}, p_1)$. It follows that $h(S)\sim_{\mathbb{P}_S} np_1/2$ with probability going to one. The densest subgraph test is therefore powerful when $\lim\inf \frac{np_1}{Np_0}>1$.\\

Let us now assume that  $\frac{np_1}{Np_0}\rightarrow 0$.
For any subset $T$, $|T|h(T)$ is the sum of two independent binomial distributions of parameters $(|S\cap T|^{(2)},p_1)$ and $(|T|^{(2)}-|S\cap T|^{(2)},p_0)$. Applying, as previously, Bernstein's inequality for all subsets $T$ of size larger than $Np_0/2$, we derive that
\beqn
|T|h(T)&\leq& \frac{|T|^{2}p_0}{2}+  \frac{|S\cap T|^2}{2}(p_1-p_0)+ \sqrt{|T|^{3}p_0\log\left(\frac{Ne^2}{|T|}\right)}+ |T|\log\left(\frac{Ne^2}{|T|}\right) \\&&+ \sqrt{2|T\cap S|^{3}p_1\log\left(\frac{ne}{|S\cap T|\vee 1}\right)}+ 2|S\cap T|\log\left(\frac{ne}{|S\cap T|\vee 1}\right)
\eeqn
with probability going to one. Comparing $h(T)$ with $Np_0/2$ we get
\beqn
\frac{2h(T)}{Np_0}\leq \frac{|T|}{N}+ \frac{n}{N}\frac{p_1-p_0}{p_0}+ o(1)+ \sqrt{\frac{np_1}{Np_0}}o(1)\ .
\eeqn
Since we assume that $np_1=o(Np_0)$, this quantity is away from one except if $|T|\sim N$.

\subsection*{Acknowledgements}

We would like to thank Jacques Verstraete for helpful discussions on the clique number of a random graph.  We first learned about the work of \cite{1109.0898} at the {\em New Trends in Mathematical Statistics} conference held at the Centre International de Rencontres Math\'ematiques (CIRM), Luminy, France, in 2011.  
The research of E. Arias-Castro is partially supported by a grant from the Office of Naval Research (N00014-13-1-0257). The research of N. Verzelen is partly supported by the french Agence Nationale
de la Recherche (ANR 2011 BS01 010 01 projet Calibration).

\bibliographystyle{chicago}
\bibliography{subgraph-detection}

\end{document}